\documentclass[12pt,twoside,reqno]{amsart}
\usepackage{mathptmx, amsmath, amssymb, amsfonts, amsthm, mathptmx, enumerate, color, mathrsfs}
\linespread{1.05}
\usepackage[colorlinks=false,citecolor=blue]{hyperref}
\usepackage{graphicx}
\graphicspath{ C:\Users\pqdee\OneDrive\Desktop\bf}
\graphicspath{ C:\Users\pqdee\OneDrive\Desktop}
\usepackage{caption}
\usepackage{subcaption}
\usepackage[export]{adjustbox}
\usepackage{float}
\setlength{\textheight}{23cm}
\setlength{\textwidth}{16cm}
\setlength{\oddsidemargin}{0cm}
\setlength{\evensidemargin}{0cm}
\setlength{\topmargin}{0cm}

\usepackage[utf8]{inputenc}
\usepackage[T1]{fontenc}
\usepackage{diagbox}
\usepackage{makecell}
\newcommand{\vertiii}[1]{{\left\vert\kern-0.25ex\left\vert\kern-0.25ex\left\vert #1 
    \right\vert\kern-0.25ex\right\vert\kern-0.25ex\right\vert}}
\hypersetup{colorlinks=true,linkcolor=red, anchorcolor=green, citecolor=cyan, urlcolor=red, filecolor=magenta, pdftoolbar=true}
\usepackage{multicol}

\newcommand{\C}{\mathbb{C}}
\newcommand{\D}{\mathbb{D}}

\newcommand{\g}{\gamma}
\renewcommand{\l}{\lambda}
\usepackage{epstopdf}
\newtheorem{theorem}{Theorem}[section]
\newtheorem{lemma}{Lemma}[section]
\newtheorem{remark}{Remark}[section]
\newtheorem{definition}{Definition}[section]
\newtheorem{corollary}{Corollary}[section]
\newtheorem{example}{Example}[section]

\numberwithin{equation}{section}
\newtheorem{case}{Case}

\allowdisplaybreaks


\begin{document}
\title{Convexity of the Berezin range of operators on $\mathcal{H}_\gamma (\mathbb{D})$ }
\author[S. Maiti] {Sandip Kumar Maiti}
\author[S. Sahoo] {Satyajit Sahoo}
\author[ G. Chakraborty] {Gorachand Chakraborty}

\address{(Maiti) Department of Mathematics, Panchakot Mahavidyalaya, Purulia, West Bengal 723121, India}
\email{pq.deep@gmail.com}

\address{(Sahoo) Department of Mathematics, School of Basic Sciences, Indian Institute of Technology Bhubaneswar, Bhubaneswar, Odisha 752050, India}
\email{satyajitsahoo2010@gmail.com, ssahoomath@gmail.com}

\address{(Chakraborty) Department of Mathematics, Sidho-Kanho-Birsha University, Purulia, West Bengal 723104, India}
\email{gorachand.chakraborty@skbu.ac.in }

 \thanks{ Dr. Satyajit Sahoo is thankful to  IIT Bhubaneswar for providing the necessary facilities to carry out this work.}

\subjclass[2020]{47B32, 52A10}
\keywords{Reproducing kernel; Berezin transform; Berezin range;
Convexity; Finite-rank operator; Hardy space; Bergman space.}

\begin{abstract}
In this paper, we characterize the convexity of the Berezin range for finite-rank operators acting on the weighted Hardy space $\mathcal{H}_\gamma (\mathbb{D})$ over the unit disc $\mathbb{D}$. We provide a complete classification in terms of convexity for concrete operators. Additionally, we address dynamical properties of finite-rank operators on Hardy and Bergman spaces. Several illustrative examples are discussed to support our theoretical findings. Additionally,
geometrical interpretations have also been employed.

\end{abstract}
\maketitle
\pagestyle{myheadings}
\markboth{\centerline{}}
{\centerline{}}
\bigskip
\bigskip
\section{Introduction}

\noindent 
The study of operator ranges and their geometric properties has been a central topic in functional analysis. In particular, the Berezin range, which associates an operator with a subset of the complex plane via Berezin transform techniques, provides insight into spectral and structural properties. 
This work focuses on the convexity of the Berezin range for \emph{finite-rank operators} acting on \( \mathcal{H}_\gamma(\mathbb{D}) \), the weighted Hardy space over the unit disc. We establish a complete classification of convexity for such operators. 

Given a non-empty set $\Omega$, let $\mathcal{F}(\Omega,\mathbb{C})$ be the collection of all functions from $\Omega$ to the field $\mathbb{C}$. We note that $\mathcal{F}(\Omega,\mathbb{C})$ is a vector space over $\mathbb{C}$. A subset $\mathcal{H}$ of $\mathcal{F}(\Omega,\mathbb{C})$ is called a \textit{reproducing kernel Hilbert space} (RKHS) if (i) $\mathcal{H}$ is a subspace of $\mathcal{F}(\Omega,\mathbb{C})$, (ii) $\mathcal{H}$ is a Hilbert space with respect to an inner product $\langle\cdot,\cdot\rangle$, and (iii) for all $\lambda\in \Omega$, the linear evaluation functional $E_\lambda:\mathcal{H}\rightarrow \mathbb{C},\;h\mapsto h(\lambda)$ is bounded. 
 Let $\mathcal{H}$ be a RKHS over a set $\Omega$. Then by Riesz representation theorem, for each evaluation functional $E_\lambda$, as defined above, there exists a unique vector $k_\lambda\in \mathcal{H}$ such that $E_\lambda(h)=\langle h,k_\lambda\rangle=h(\lambda).$ This function $k_\lambda$ is called the \textit{reproducing kernel function} (or kernel function as a shorthand) at the point $\lambda\in \Omega$. For more details on RKHS we refer \cite {Paulsen_Book_2016}.\par
The concept of the numerical range of an operator goes back to O. Toeplitz, who defined in 1918 the field of values of a matrix, a concept easily extensible to bounded linear operators on a Hilbert space. Let $\mathcal{L}(\mathcal{H})$ denotes the collection of all bounded linear operators on a RKHS $\mathcal{H}$. The \textit{numerical range} of an operator $T\in\mathcal{L(H)}$ is defined to be the complex set
 \begin{equation*}
     W(T)=\left\{\left \langle Tx,x\right \rangle:x\in\mathcal{H}, \|x\|=1\right\},
 \end{equation*}
where $\left\langle \cdot,\cdot\right\rangle$ is the inner product defined on $\mathcal{H}$ and $\|\cdot\|$ is the induced norm on $\mathcal{H}$.
It is well known that $W(T)$ is a bounded convex set, which is called the \textit{Toeplitz-Hausdorff} Theorem \cite{Gau_Pei_2021, Gustafson_PAMS_1970}, but the exact shape of it is uncertain, in general.
The radius of the smallest circular disc with center at the origin containing $W(T)$ is defined to be the numerical radius $w(T)$ of $T\in\mathcal{L(H)}.$ Thus, if $T\in\mathcal{L(H)}$, then 
$w(T)=\sup\{|z|:z\in W(T)\}.$ \par
For a bounded linear operator $T$ acting on a RKHS $\mathcal{H}$ over a set $\Omega$, the \textit{Berezin transform} (or \textit{Berezin symbol}) of $T$ at a point $\lambda\in \Omega$ is defined as
\begin{equation*}
    \widetilde{T}(\lambda)=\langle T\widehat{k}_\lambda, \widehat{k}_\lambda\rangle_\mathcal{H},
\end{equation*}
where $\widehat{k}_\lambda=\dfrac{k_\lambda}{\|k_\lambda\|}$ is the normalized reproducing kernels. Also, the \textit{Berezin range} (or \textit{Berezin set}) of $T$ is defined as
\begin{equation*}
    \mbox{Ber}(T)=\Bigl \{\widetilde{T}(\lambda):\lambda\in \Omega \Bigr \}=\Bigl \{\langle T\widehat{k}_\lambda,\widehat{k}_\lambda\rangle\}_\mathcal{H}:\lambda\in \Omega\Bigr \},
\end{equation*}
and the Berezin radius (or the Berezin number) of $T$ is
\begin{equation*}
     \mbox{ber}(T)=\sup_{\lambda\in \Omega}|\widetilde{T}(\lambda)|=\sup \{|z|:z\in  \mbox{Ber}(T)\}.
\end{equation*}
Clearly, the Berezin symbol $\widetilde{T}$ is a bounded function on $\Omega$ whose values lie in the numerical range of the operator $T$, and hence $\mbox{Ber}(T)\subseteq W(T) ~\mbox{and}~  \mbox{ber}(T)\leq w(T).$  Also, it is obvious that (see \cite{Karaev_Article_2006}, \cite{Karaev2_Article_2006}) the Berezin symbol $\widetilde{T}$ is a bounded function on $\Omega$ and $\sup_{\lambda\in \Omega}|\widetilde{T}(\lambda)|$, which is the Berezin number of $T$ does not exceed $\|T\|,$ i. e., $ \mbox{ber}(T)=\sup_{\lambda\in \Omega}|\widetilde{T}(\lambda)|\leq \|T\|.$

A well-known example of a reproducing kernel Hilbert space is the classical Hardy Hilbert space $H^2$ \cite{Paulsen_Book_2016} on the unit disk $\mathbb{D}$,
$$H^2(\mathbb{D})=\left\{f(z)=\sum_{n\geq 0} a_nz^n\in \mbox{Hol}(\mathbb{D}): \sum_{n\geq 0}|a_n|^2<\infty\right\},$$

where $\mbox{Hol}(\mathbb{D})$ is the collection of holomorphic functions on $\mathbb{D}$. If $f(z)=\sum_{n\geq 0} a_nz^n$ and $g(z)=\sum_{n\geq 0} b_nz^n$  are elements of $H^2$, then their inner product is given by $\langle f, g\rangle=\sum_{n\geq 0} a_n\overline{b_n}$. 
 The reproducing kernel
for $H^2$, known as the Szego kernel, is given by $k_\lambda(z)=\frac{1}{1-\overline{\lambda}z}$, $z,\lambda\in \mathbb{D}$. However, another RKHS to keep in mind is the Bergman space \cite{Paulsen_Book_2016} on the unit disc $\mathbb{D}$

$$A^2(\mathbb{D})=\left\{f\in \mbox{Hol}(\mathbb{D}):\int_{\mathbb{D}}|f(z)|^2dV(z)<\infty\right\},$$
where $dV$ is the normalized volume measure on $\mathbb{D}$. The reproducing kernel
for $A^2(\mathbb{D})$, is given by $k_\lambda(z)=\frac{1}{(1-\overline{\lambda}z)^2}$, $z, \lambda \in \mathbb{D}$. 
Let $\mathcal{H}_\gamma (\mathbb{D})$ be the Hilbert space of holomorphic functions on $\mathbb{D}$ with the reproducing kernel $$ k_\lambda(z)=(1-\overline{\lambda}z)^{-\gamma},$$ for all $\lambda, z\in \C$, and $\gamma>0$. Note that for $\gamma=1~ (\mbox{resp}. ~ \gamma=2)$, $\mathcal{H}_\gamma$ reduces to the classical  Hardy space (resp. Bergman space).  In fact, such a space was studied in \cite{Le_JMAA_2012} for several complex variables and for $\gamma>0$.
In this definition,
\begin{equation*}
    \frac{1}{\left(1-\overline{\lambda}z\right)^\gamma}=\exp \left(\gamma\; \mbox{Log}\frac{1}{1-\overline{\lambda}z}\right),
\end{equation*}
where $\mbox{Log}$ denotes the principal branch of logarithm (in general, we define $a^\gamma=\exp\left(\gamma \;\mbox{Log}a\right)$ for every $a\in \mathbb{C} \setminus \left\{0\right\}$).
Clearly, $\mathcal{H}_\gamma(\D)$ is the completion of the linear span of $\left\{k_\lambda:\lambda\in \mathbb{D}\right\}$ with respect to the inner product defined by $\langle k_\lambda,k_z\rangle=k_\lambda(z)$ with 
\begin{equation}
    \|k_\lambda\|^2=\langle k_\lambda,k_\lambda\rangle=k_\lambda(\lambda)=\frac{1}{(1-|\lambda|^2)^\gamma}.
\end{equation}
The normalized reproducing kernel at the point $\lambda$, denoted by $\widehat{k}_\lambda,$ is given by
\begin{equation}
    \widehat{k}_{\lambda}(z)=\dfrac{k_\lambda(z)}{\|k_\lambda\|}=\dfrac{(1-|\lambda|^2)^{\frac{\gamma}{2}}}{(1-\overline{\lambda}z)^\gamma}.
\end{equation}
For more details on weighted Hardy space, we refer the readers to \cite [Section 2.1]{Cowen_MacCluer_1995} and \cite{Zhu_Article_2008}.

We now provide some expository remarks to motivate our results and highlight the diverse applications of the Berezin transform.

\section{Background and Motivation}

\noindent 

The Berezin set and Berezin number, were allegedly first formally introduced by Karaev in \cite{Karaev_Article_2006} while F. Berezin himself in \cite{Berezin_Article_1972} first introduced the notion of Berezin symbol. This notion has been proven to be a critical tool in operator theory, as many significant properties of important operators are encrypted in their Berezin transforms. In particular, it is known that on the most familiar RKHS including the Hardy space, the Bergman space, the Fock space, and the Dirichlet space, the Berezin transform uniquely determines the operator, i.e., if $T_1, T_2\in \mathcal{L(H)}$,  then $T_1=T_2\;\Leftrightarrow \widetilde{T}_1=\widetilde{T}_2.$ \par
The very first important result involving the Berezin symbol involves the invertibility of Toeplitz operators acting on $H^2.$ R. G. Douglas in \cite{Douglas_Conf_1972} asked the question: if $\varphi \in L^{\infty}(\mathbb{T})$ with $|\widetilde{T}_\varphi|\geq \delta>0,$ is the Toeplitz operator invertible? Tolokonnikov in \cite{Tolo_Art_1981} and then Wolff in \cite{Wolff_Article_2002} answered to this question in affirmative provided $\delta$ is sufficiently closed to $1.$ \par

 Let $\varphi$ be a holomorphic self-map of $\mathbb{D}$, then the equation $C_\varphi(f)=f\circ \varphi$ defines a composition operator $C_\varphi$ with the inducing map $\varphi$. 
These operators are appreciated by many and have a significant and deep roots in operator and function theory (see \cite{Cowen_MacCluer_1995, Shapiro_1993}). Berger and Coburn \cite{Berger_Coburn_1986} asked: {\it if the Berezin symbol of an operator on the Hardy or Bergman space vanishes on the boundary of the disk, must the operator be compact? }
Nordgren and Rosenthal \cite{Nordgren_Rosenthal_1992} have demonstrated, on a so-called standard reproducing kernel Hilbert space, that if the Berezin symbols of all unitary equivalents of an operator vanish on the boundary, then the operator is compact. The counterexamples presented come in the form of composition operators.
Another significant finding from Axler and Zheng \cite{Axler_Zheng_1998}, is that if $S$ is a finite sum of finite products of Toeplitz operators acting on the Bergman space of the unit disk, then $S$ is compact if and only if the Berezin symbol of $S$ vanishes as it approaches the boundary of the disk. 
One motivation for studying the Berezin range of these operators is that they often elude  Axler-Zheng type results; e.g. there are composition operators such that $\widetilde{C}_\varphi(z) \rightarrow 0$ as $ z \rightarrow \partial \mathbb{D}$, but $C_\varphi$ is not compact.

Another important field of investigation involving Berezin symbol is the geometry of Berezin range. Over the years, numerous mathematicians have extensively explored the geometry of the numerical range and developed a wide array of numerical radius inequalities. So, it is quite natural to ask whether the results involving numerical range and numerical radius of an operator $T$ are valid or have a similar extension for the Berezin range and the Berezin radius of $T.$ The convexity of Berezin range has been investigated in several works including \cite{CowenFelder}, \cite{Augustine_Garayev_Shankar_CAOT_2023}. Some extensive studies on Berezin radius inequalities can be found in \cite{Bhunia_Article_2023, Bhunia2_Article_2023, Garayev_Article_2023, Tap_Article_2021, Zamani_Article_2024}. Recently, Augustine \textit{et al.} in \cite{Athul2_Article_2024} investigated the convexity of finite-rank operators on the Hardy space and on the Bergman space and also developed some Berezin radius inequalities. In this article, we investigate the convexity of finite-rank operators on the weighted Hardy space $\mathcal{H}_\g (\D).$

\subsection{Convexity of the Berezin range}

The numerical range of an operator plays a crucial role in operator theory, matrix analysis, and it exhibits several notable properties. For instance, it is well known that the spectrum of an operator lies within the closure of its numerical range. Moreover, the numerical range is always convex, a result known as the Toeplitz-Hausdorff Theorem. For additional background on the numerical range, we refer the reader to \cite{GRO, halmos}. While considerable effort has been devoted to characterizing the geometry of the numerical range (see, for example, \cite{Daepp_Gorkin_Shaffer_Voss_2018, Kippenhahn_2008}), there appear to be only a few results concerning the geometry of the Berezin range \cite[Section 2.1]{Karaev}, and, to the best of the authors’ knowledge, none of these address the question of convexity.\\ \\
As the convexity of the numerical range stands out as one of its most distinctive property, it serves as the motivation for the primary question addressed in this work.
\\ \\
{\bf Question-$1$:} {\bf Given a bounded operator acting on reproducing kernel Hilbert space, is it true that the Berezin range of the operator is convex? Conversely, if Berezin range of the operator is convex, what can be said about the operator? }\\ 

This question was first raised by Karaev \cite{Karaev}, and we provide answers for a few classes of specific operators.

By \textit{Toeplitz-Hausdorff} Theorem, the numerical range of an operator is always convex \cite{Gustafson_PAMS_1970}. It is not very difficult to observe that the Berezin range of an operator is always a subset of the numerical range. In general, the Berezin range of an operator need not be convex. What fails in the Toeplitz-Hausdorff theorem when restricting to normalized reproducing kernels? Karaev \cite{Karaev} has initiated the study of the geometry of the Berezin range. In fact Karaev \cite{Karaev} has proved that the Berezin range of the Model operator $M_{z^n}$ on the model space is convex. Motivated by this, recently Cowen and Felder \cite{CowenFelder} explored the convexity of the Berezin range for matrices, multiplication operators on reproducing kernel Hilbert space, and composition operators acting on Hardy space of the unit disc.
In light of the Cowen-Felder's results, Augustine {\it et al.} \cite{Augustine_Garayev_Shankar_CAOT_2023} have characterized the convexity of the Berezin range of composition operators acting on Bergman space over the unit disk.  
The Berezin transform is most useful on any reproducing kernel Hilbert space (RKHS), and the Berezin range may naturally be more relevant in this context. Now, we propose the following problem.
\\ 

{\bf Question-$2$: What can be said about the convexity of the Berezin range of a special class of operators acting on $\mathcal{H}_\gamma(\mathbb{D})$?}
\\

The geometry of the Berezin range of a matrix is notably simple compared to the numerical range of the matrix. Additionally, we note that the trace of a matrix can be recovered as the sum of the elements in its Berezin range. More broadly, it is known in certain spaces (see, for instance, \cite[Proposition 3.3]{ZK}) that if $T$
 is a trace-class (or positive) operator, its trace can be recovered via the Berezin transform. Since the Berezin transform is straightforward in the finite-dimensional setting, we now turn our attention to infinite-dimensional spaces.

\subsection{Multiplication operators}
Let us recall that for any Hilbert (or Banach) space of functions $\mathcal{H}$,
$$\textnormal{Mult}(\mathcal{H}):=\{g\in \mathcal{H}: gf \in \mathcal{H}~~\textnormal{for all}~ f\in \mathcal{H}\}.$$
For $g\in \textnormal{Mult}(\mathcal{H})$, the multiplication operator $M_g$ on $\mathcal{H}$ by $M_gf=gf$.
\begin{lemma}\cite[Proposition 3.2]{CowenFelder}
    Let $\mathcal{H}$ be an RKHS on a set $X$ and $g\in \textnormal{Mult}(\mathcal{H})$. Then the Berezin range of $M_g$ is convex if and only if $g(X)$ is convex.
\end{lemma}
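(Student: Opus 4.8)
The plan is to compute the Berezin transform of $M_g$ explicitly and observe that it equals $g$ composed with a reparametrization of the point set. First I would recall that for the RKHS $\mathcal{H}$ on $X$ with kernel $k_\lambda$, the adjoint of $M_g$ acts on kernels by $M_g^* k_\lambda = \overline{g(\lambda)}\, k_\lambda$; this is the standard computation $\langle f, M_g^* k_\lambda\rangle = \langle gf, k_\lambda\rangle = g(\lambda) f(\lambda) = \langle f, \overline{g(\lambda)} k_\lambda\rangle$ for all $f \in \mathcal{H}$. Consequently
\begin{equation*}
    \widetilde{M_g}(\lambda) = \langle M_g \widehat{k}_\lambda, \widehat{k}_\lambda\rangle = \langle \widehat{k}_\lambda, M_g^* \widehat{k}_\lambda\rangle = \langle \widehat{k}_\lambda, \overline{g(\lambda)}\,\widehat{k}_\lambda\rangle = g(\lambda)\,\|\widehat{k}_\lambda\|^2 = g(\lambda).
\end{equation*}
Hence $\mathrm{Ber}(M_g) = \{\widetilde{M_g}(\lambda) : \lambda \in X\} = \{g(\lambda) : \lambda \in X\} = g(X)$.

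Once this identity is established, the lemma is immediate: $\mathrm{Ber}(M_g)$ is convex if and only if $g(X)$ is convex, since the two sets are literally equal. So the entire content of the proof is the kernel computation, and I would present it in the two or three lines above, perhaps with a sentence noting that $M_g$ is bounded (hence $M_g^*$ makes sense) precisely because $g \in \mathrm{Mult}(\mathcal{H})$, which by the closed graph theorem forces $M_g$ to be a bounded operator on $\mathcal{H}$.

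There is essentially no obstacle here; the only point requiring a modicum of care is justifying that $M_g$ is bounded so that the Berezin transform is defined in the first place, but this is classical (closed graph theorem applied to the multiplication operator on a Hilbert function space). A secondary subtlety worth a remark is that $g(X)$ need not be relatively closed or open, so ``convex'' is meant in the bare set-theoretic sense (for any two points in $g(X)$, the segment joining them lies in $g(X)$); the equality $\mathrm{Ber}(M_g) = g(X)$ respects this without any topological hypothesis. I would therefore keep the proof to a single short paragraph, emphasizing the eigen-relation $M_g^* \widehat{k}_\lambda = \overline{g(\lambda)}\,\widehat{k}_\lambda$ as the crux.
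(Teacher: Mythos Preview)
Your proof is correct. The paper does not prove this lemma directly (it is quoted from \cite{CowenFelder}), but it does reprove the identity $\mathrm{Ber}(M_\varphi)=\varphi(\mathbb{D})$ in Theorem~\ref{th:berezin range_mult} by the same one-line computation, only phrased via the reproducing property $\langle M_\varphi k_\lambda,k_\lambda\rangle=(M_\varphi k_\lambda)(\lambda)=\varphi(\lambda)k_\lambda(\lambda)$ rather than via the eigen-relation $M_g^*k_\lambda=\overline{g(\lambda)}k_\lambda$; the two formulations are equivalent and the argument is essentially identical.
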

At first glance, one might expect the convexity of the Berezin range to be easily understood. However, this is not generally the case. To illustrate this point, we examine certain classes of operators acting on $\mathcal{H}_\gamma$, where the characterization of convexity becomes significantly more technically involved.

The aim of the present paper is to analyze and answer the above {\bf Question-2} for the finite-rank operator.

Karaev \cite{Karaev} investigated the Berezin range of the rank one operator of the form $T(f)=\langle f, z\rangle z$ acting on Hardy space $H^2(\mathbb{D})$. Motivated by this Augustine {\it et al.} \cite{Athul2_Article_2024} further explored the convexity of finite-rank operators on the Hardy space, on the Bergman space and also developed some Berezin radius inequalities very recently.
 Inspired by this, we investigate the convexity for the Berezin range of the finite-rank operator on $\mathcal{H}_\gamma(\mathbb{D})$.
The main ingredient of this paper is to characterize the convexity of the Berezin range of certain concrete operators. More specifically, we characterize the Berezin range of finite-tank operators acting on $\mathcal{H}_\gamma(\mathbb{D})$.
The present article narrows down to the precise question of investigating the convexity of the Berezin range of finite-tank operators acting on $\mathcal{H}_\gamma(\mathbb{D})$.
\\ 

Among the results presented in \cite{Athul2_Article_2024}, the main obstacle to obtaining analogous results in the reproducing kernel Hilbert space $\mathcal{H}_\gamma(\mathbb{D})$ is the greater complexity of the reproducing kernel, which is given by $ k_\lambda(z)=(1-\overline{\lambda}z)^{-\gamma}.$


\section{Results and discussions}

In this section, we characterize the convexity of the Berezin range for finite-rank operators acting on $\mathcal{H}_\gamma(\mathbb{D})$. Further, we geometrically illustrate the region of convexity for several sets of parameters using Python.
\subsection{Berezin range of finite rank operator on the weighted Hardy space}

\noindent 

An operator acting on a Hilbert space $\mathcal{H}$ is said to be of finite rank if $\mbox{dim}(R(T))$ is finite. In particular, if $\mbox{dim}(R(T))=1,$ then $T$ is called a \textit{rank-one operator}. It is well known that if $T$ be a bounded linear operator on a Hilbert space $\mathcal{H}$, then $Tf=\langle f,\phi \rangle \psi$ for all $f\in \mathcal{H},$ where $\psi$ is a nonzero vector in $R(T)$ and $\phi$ is a fixed unique element in $\mathcal{H}.$\par
First, we deal with the simple most rank-one operator of the form $T(f)=\langle f,z \rangle z$ acting on $\mathcal{H}_\gamma(\mathbb{D}).$

\begin{theorem}\label{th:(f,z)z}
    Let $T$ be a rank-one operator of the form $T(f)=\langle f,z\rangle z$ acting on $\mathcal{H}_\gamma(\mathbb{D}).$ Then, $\textnormal{Ber}(T)=\left[0,\;\dfrac{\gamma^\gamma}{\left(1+\gamma\right)^{1+\gamma}}\right],$ which is convex in $\mathbb{C}.$
\end{theorem}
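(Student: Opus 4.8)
The plan is to compute the Berezin transform $\widetilde{T}(\lambda)$ explicitly as a function of $\lambda \in \mathbb{D}$ and then determine its range. First I would note that for the rank-one operator $T(f) = \langle f, z\rangle z$, we have $\widetilde{T}(\lambda) = \langle T\widehat{k}_\lambda, \widehat{k}_\lambda\rangle = \langle \langle \widehat{k}_\lambda, z\rangle z, \widehat{k}_\lambda\rangle = \langle \widehat{k}_\lambda, z\rangle \langle z, \widehat{k}_\lambda\rangle = |\langle \widehat{k}_\lambda, z\rangle|^2$. So the Berezin range is automatically a subset of $[0,\infty)$, and the task reduces to computing $|\langle \widehat{k}_\lambda, z\rangle|^2$ and finding its supremum and the set of values it attains.

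Next I would compute $\langle k_\lambda, z\rangle$ using the power series expansion of the kernel. Writing $k_\lambda(z) = (1-\overline{\lambda}z)^{-\gamma} = \sum_{n\geq 0} c_n(\gamma) \overline{\lambda}^n z^n$ where $c_n(\gamma) = \binom{\gamma+n-1}{n}$ is the generalized binomial coefficient, and noting that $\langle z^n, z\rangle = 0$ for $n \neq 1$ while $\langle z, z\rangle = \|z\|^2 = 1/c_1(\gamma) = 1/\gamma$ (since in $\mathcal{H}_\gamma$ the monomials are orthogonal with $\|z^n\|^2 = 1/c_n(\gamma)$), I get $\langle k_\lambda, z\rangle = c_1(\gamma)\overline{\lambda} \cdot \|z\|^2 = \gamma \overline{\lambda} \cdot (1/\gamma) = \overline{\lambda}$. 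Dividing by $\|k_\lambda\| = (1-|\lambda|^2)^{-\gamma/2}$ gives $\langle \widehat{k}_\lambda, z\rangle = \overline{\lambda}(1-|\lambda|^2)^{\gamma/2}$, hence
\[
\widetilde{T}(\lambda) = |\lambda|^2 (1-|\lambda|^2)^{\gamma}.
\]

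Finally I would analyze the real function $\varphi(t) = t(1-t)^\gamma$ on $[0,1)$, where $t = |\lambda|^2$. Since $\varphi(0) = 0$, $\varphi(t) > 0$ on $(0,1)$, and $\varphi \to 0$ as $t \to 1^-$, by continuity and the intermediate value theorem the image is $[0, \max_{[0,1]}\varphi]$, an interval — hence convex. Differentiating, $\varphi'(t) = (1-t)^{\gamma-1}(1 - (1+\gamma)t)$, which vanishes at $t = 1/(1+\gamma)$, giving the maximum value $\varphi\big(\tfrac{1}{1+\gamma}\big) = \tfrac{1}{1+\gamma}\big(\tfrac{\gamma}{1+\gamma}\big)^\gamma = \tfrac{\gamma^\gamma}{(1+\gamma)^{1+\gamma}}$. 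Therefore $\textnormal{Ber}(T) = \big[0, \tfrac{\gamma^\gamma}{(1+\gamma)^{1+\gamma}}\big]$, which is convex in $\mathbb{C}$ since it is a real interval. The only mildly delicate point is getting the normalization of the monomials in $\mathcal{H}_\gamma$ right (i.e. that $\|z\|^2 = 1/\gamma$), which follows from the reproducing property $\langle k_\lambda, k_z\rangle = k_\lambda(z)$ by matching Taylor coefficients; everything else is a routine one-variable calculus computation.
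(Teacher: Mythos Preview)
Your proof is correct and follows essentially the same route as the paper: compute $\widetilde{T}(\lambda)=(1-|\lambda|^2)^\gamma|\lambda|^2$, then optimize the resulting one-variable function and invoke the intermediate value theorem. The only difference is cosmetic --- the paper obtains $\langle k_\lambda,z\rangle=\overline{\lambda}$ in one line from the reproducing property $\langle z,k_\lambda\rangle=z(\lambda)=\lambda$, whereas you go through the power-series expansion and the norm $\|z\|^2=1/\gamma$; both reach the same formula.
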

    \begin{proof}
        For $\lambda\in \mathbb{D}$, we have
        \begin{equation*}
            T(k_\lambda)=\langle k_\lambda,z\rangle z=\overline{\langle z,k_\lambda\rangle}z=\overline{\lambda}z,
        \end{equation*}
        so that the Berezin transform of $T$ at $\lambda$ is
        \begin{equation*}
            \widetilde{T}(\lambda)=\langle T\widehat{k}_\lambda,\widehat{k}_\lambda\rangle=\dfrac{1}{\|k_\lambda\|^2}\langle Tk_\lambda,k_\lambda\rangle=\left(1-|\lambda|^2\right)^\gamma (Tk_\lambda)(\lambda)=\left(1-|\lambda|^2\right)^\gamma |\lambda|^2,
        \end{equation*}
        where $|\lambda|\in[0,1).$ We observe that $\widetilde{T}(\lambda)=\widetilde{T}(|\lambda|)$ is a real analytic function. Now we search for the extreme points of $\widetilde{T}(\lambda)$. We differentiate $\widetilde{T}(\lambda)$ w.r.to $|\lambda|$ to have
        \begin{equation*}
            \dfrac{d}{d|\lambda|}\left[\widetilde{T}(\lambda)\right]=2|\lambda|\left(1-|\lambda|^2\right)^{\gamma-1}\left[1-(1+\gamma)|\lambda|^2\right].
        \end{equation*}
        Clearly, $ \dfrac{d}{d|\lambda|}\left[\widetilde{T}(\lambda)\right]=0,\;\text{when}\; |\lambda|^2=0,\;\dfrac{1}{1+\gamma}$.\\
        Now, $\left.\widetilde{T}(\lambda)\right \vert_{|\lambda|^2=0}=0.$ Also, $\left.\widetilde{T}(\lambda)\right \vert_{|\lambda|^2=\frac{1}{1+\gamma}}=\dfrac{\gamma^\gamma}{(1+\gamma)^{1+\gamma}}.$\\
        Thus, $\left.\widetilde{T}(\lambda)\right\vert_{\text{min}}=0$ and $\left.\widetilde{T}(\lambda)\right\vert_{\text{max}}=\dfrac{\gamma^\gamma}{(1+\gamma)^{(1+\gamma)}}.$ Since $T$ is bounded, $\widetilde{T}(\lambda)$ is a continuous real analytic function. Hence by the Intermediate value theorem, $\widetilde{T}(\lambda)$ takes on any real value between $0$ and $\dfrac{\gamma^\gamma}{\left(1+\gamma\right)^{1+\gamma}}.$ Hence, $\textnormal{Ber}(T)=\left[0,\;\dfrac{\gamma^\gamma}{\left(1+\gamma\right)^{1+\gamma}}\right],$ which is a convex set in $\mathbb{C}.$
    \end{proof}
    \begin{remark}
        On putting $\gamma=1$ and $\gamma=2$ respectively in Theorem \ref{th:(f,z)z}, we see that the Berezin ranges of the rank-one operator $T$ given by $T(f)=\langle f,z\rangle z$ acting on the Hardy space $H^2(\mathbb{D})$ and the Bergman space $A^2(\mathbb{D})$ are respectively $\left[0,\;\frac{1}{4}\right]$ and $\left[0,\;\frac{4}{27}\right]$ (see Figure \ref{fig:theorem_1,(f,z)z,g=1,2,0.01,5_wc}), both of which are convex sets in $\mathbb{C}$, which have been proved in \cite{Athul2_Article_2024}.
    \end{remark}
    \begin{remark}\label{re:th_1}
        Using calculus, it can be shown that for $\gamma>0,$ the function $\dfrac{\gamma^\gamma}{\left(1+\gamma\right)^{1+\gamma}}$ is monotone decreasing with $\lim_{\gamma \rightarrow 0^+}\dfrac{\gamma^\gamma}{\left(1+\gamma\right)^{1+\gamma}}=1$ and $\lim_{\gamma \rightarrow \infty}\dfrac{\gamma^\gamma}{\left(1+\gamma\right)^{1+\gamma}}=0.$ This indicates that the Berezin range of the operator $T(f)=\langle f,z \rangle$ acting on $\mathcal{H}_\gamma(\mathbb{D})$ squeezes with increasing $\gamma$ and ultimately for very large value of $\gamma$ it tends to coincide with the origin, which can also be verified with the aid of a computer (see Figure \ref{fig:theorem_1,(f,z)z,g=1,2,0.01,5_wc}).
    \end{remark}
   
    \begin{figure}[htbp!]
        \centering
        \begin{subfigure}[b]{0.3\textwidth}
            \centering
            \includegraphics[width=\textwidth]{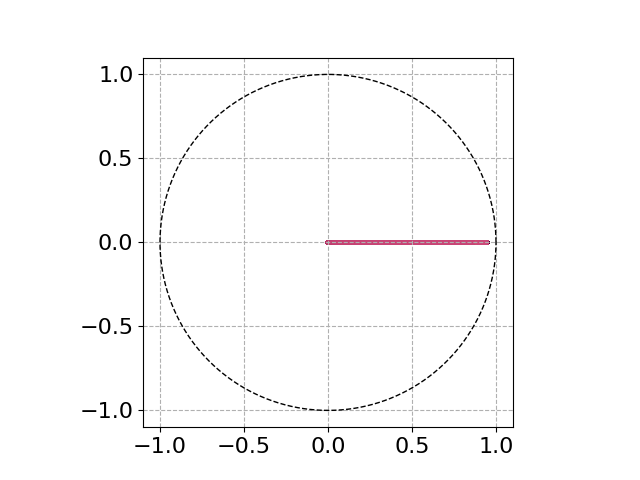}
            \caption{\centering$\gamma=0.01$}
            \label{fig:th_1_g=0.01_fs=16_plasma.png}
        \end{subfigure}
        \hfill
        \begin{subfigure}[b]{0.3\textwidth}
            \centering
            \includegraphics[width=\textwidth]{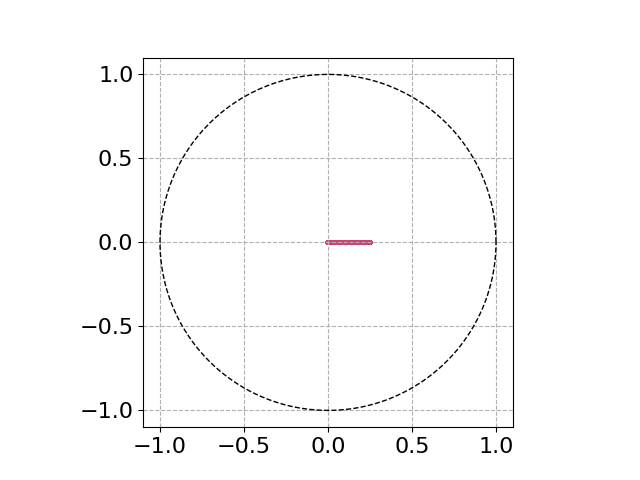}
            \caption{\centering $\gamma=1$ (Hardy space)}
            \label{fig:th_1_g=1_fsh=16_plasma.png}
        \end{subfigure}
        \hfill
        \begin{subfigure}[b]{0.3\textwidth}
            \centering
            \includegraphics[width=\textwidth]{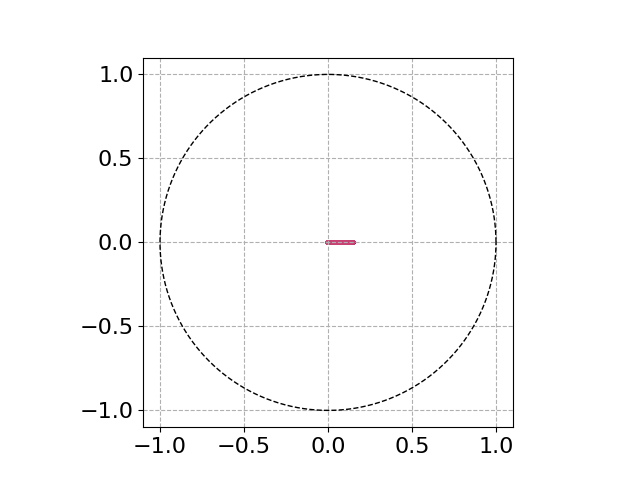}
            \caption{$\gamma=2$ (Bergman space)}
            \label{fig:th_1_g=2_fs=16_plasma.png}
        \end{subfigure}          
    \caption{Comparison of Berezin ranges of the operator $T(f)=\langle f,z \rangle z$ acting on the weighted Hardy space $\mathcal{H}_\gamma (\mathbb{D})$ with different weights.}
    \label{fig:theorem_1,(f,z)z,g=1,2,0.01,5_wc}
    \end{figure}

    The next theorem is a generalization of Theorem \ref{th:(f,z)z}.

    \begin{theorem}\label{th:same integral power}
        Let $T$ be a rank-one operator of the form $T(f)=\langle f,z^n\rangle z^n,$ where $n\in \mathbb{N},$ acting on $\mathcal{H}_\gamma(\mathbb{D}).$ Then, $\textnormal{Ber}(T)=\left[0,\;\dfrac{\gamma^\gamma n^n}{\left(n+\gamma\right)^{n+\gamma}}\right],$ which is convex in $\mathbb{C}.$
    \end{theorem}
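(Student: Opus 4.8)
The plan is to mimic the proof of Theorem~\ref{th:(f,z)z} almost verbatim, replacing the single variable by the $n$-th power. First I would compute the action of $T$ on the reproducing kernel: since $\langle k_\lambda, z^n\rangle = \overline{\langle z^n, k_\lambda\rangle}$ and the reproducing property gives $\langle z^n, k_\lambda\rangle$ in terms of the Taylor coefficient of $z^n$ at $\lambda$, one finds $T(k_\lambda) = c\,\overline{\lambda}^{\,n}\, z^n$ for an explicit constant $c$ depending on $\|z^n\|^2$ in $\mathcal{H}_\gamma$; in fact the cleanest route is to note $T(f) = \langle f, z^n\rangle z^n$ and evaluate $\widetilde{T}(\lambda) = (1-|\lambda|^2)^\gamma \langle T k_\lambda, k_\lambda\rangle$, which collapses to $(1-|\lambda|^2)^\gamma |\lambda|^{2n}$ after the reproducing property is applied twice (the norm factors cancel exactly as in the $n=1$ case). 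This reduces the problem to analyzing the real function $\varphi(t) = (1-t^2)^\gamma t^{2n}$ for $t = |\lambda| \in [0,1)$.

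Next I would locate the critical points of $\varphi$. Differentiating with respect to $t$ gives
\begin{equation*}
    \varphi'(t) = 2n\, t^{2n-1}(1-t^2)^{\gamma-1}\bigl[\, n - (n+\gamma)t^2\,\bigr],
\end{equation*}
so that $\varphi'(t) = 0$ precisely when $t^2 = 0$ or $t^2 = \dfrac{n}{n+\gamma}$. Evaluating, $\varphi(0) = 0$ and $\varphi\!\left(\sqrt{n/(n+\gamma)}\right) = \dfrac{\gamma^\gamma n^n}{(n+\gamma)^{n+\gamma}}$, and since $\varphi \geq 0$ on $[0,1)$ with $\varphi \to 0$ as $t \to 1^-$, the former is the minimum and the latter the maximum. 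Then, exactly as before, $\widetilde{T}$ is a continuous (indeed real analytic) function of $|\lambda|$ because $T$ is bounded, so the Intermediate Value Theorem forces $\mathrm{Ber}(T)$ to be the full interval $\left[0,\; \dfrac{\gamma^\gamma n^n}{(n+\gamma)^{n+\gamma}}\right]$, which is convex in $\mathbb{C}$.

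I do not anticipate a genuine obstacle here: the argument is structurally identical to Theorem~\ref{th:(f,z)z}, with $n$ playing the role previously played by $1$. The only mild care point is the computation of $\langle z^n, z^n\rangle$ in $\mathcal{H}_\gamma(\mathbb{D})$ and verifying that all the normalization constants cancel so that the Berezin symbol is cleanly $(1-|\lambda|^2)^\gamma |\lambda|^{2n}$ with no stray factor; this follows because $T$ applied to $\widehat{k}_\lambda$ produces $\langle \widehat{k}_\lambda, z^n\rangle z^n$ and the inner product $\langle T\widehat{k}_\lambda, \widehat{k}_\lambda\rangle = |\langle \widehat{k}_\lambda, z^n\rangle|^2 \|z^n\|^2$, while $\langle \widehat{k}_\lambda, z^n \rangle = \overline{(\widehat{k}_\lambda \text{'s } n\text{-th coefficient})}$, and the product of $\|z^n\|^2$ with the squared coefficient simplifies to $(1-|\lambda|^2)^\gamma|\lambda|^{2n}$. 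Once that bookkeeping is done, the rest is the same one-variable calculus and IVT argument as in the previous theorem.
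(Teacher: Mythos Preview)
Your approach is essentially identical to the paper's: compute $\widetilde{T}(\lambda)=(1-|\lambda|^2)^\gamma|\lambda|^{2n}$ via the reproducing property, optimize over $|\lambda|\in[0,1)$, and invoke the Intermediate Value Theorem. Two small slips to clean up: the derivative should be $\varphi'(t)=2\,t^{2n-1}(1-t^2)^{\gamma-1}\bigl[n-(n+\gamma)t^2\bigr]$ (no extra leading factor of $n$), and your ``care point'' formula $\langle T\widehat{k}_\lambda,\widehat{k}_\lambda\rangle=|\langle\widehat{k}_\lambda,z^n\rangle|^2\|z^n\|^2$ is incorrect---there is no $\|z^n\|^2$ factor, since $\langle T\widehat{k}_\lambda,\widehat{k}_\lambda\rangle=\langle\widehat{k}_\lambda,z^n\rangle\langle z^n,\widehat{k}_\lambda\rangle=|\langle z^n,\widehat{k}_\lambda\rangle|^2=(1-|\lambda|^2)^\gamma|\lambda|^{2n}$ directly; the paper avoids this detour by simply evaluating $(Tk_\lambda)(\lambda)=\overline{\lambda}^n\lambda^n$.
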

    \begin{proof}
        For $\lambda\in \mathbb{D}$, we have
        \begin{equation*}
            T(k_\lambda)=\langle k_\lambda,z^n\rangle z^n=\overline{\langle z^n,k_\lambda\rangle}z^n=\overline{\lambda^n}z^n,
        \end{equation*}
        so that the Berezin transform of $T$ at $\l$ is
        \begin{equation*}
            \widetilde{T}(\lambda)=\langle T\widehat{k}_\lambda,\widehat{k}_\lambda\rangle=\dfrac{1}{\|k_\l\|^2}\langle Tk_\lambda,k_\lambda\rangle=\left(1-|\lambda|^2\right)^\gamma (Tk_\lambda)(\lambda)=\left(1-|\lambda|^2\right)^\gamma |\lambda|^{2n},
        \end{equation*}
        where $|\lambda|\in[0,1).$ We observe that $\widetilde{T}(\lambda)=\widetilde{T}(|\lambda|)$ is a real analytic function. Now we search for the extreme points of $\widetilde{T}(\lambda)$. We differentiate $\widetilde{T}(\lambda)$ with respect to $|\lambda|$ to have
        \begin{equation*}
            \dfrac{d}{d|\lambda|}\left[\widetilde{T}(\lambda)\right]=2|\lambda|^{2n-1}\left(1-|\lambda|^2\right)^{\gamma-1}\left[n-(n+\gamma)|\lambda|^2\right].
        \end{equation*}
        Clearly, $ \dfrac{d}{d|\lambda|}\left[\widetilde{T}(\lambda)\right]=0,\;\text{when}\; |\lambda|^2=0,\;\dfrac{n}{n+\gamma}.$\\
        Now, $\left.\widetilde{T}(\lambda)\right \vert_{|\lambda|^2=0}=0.$ Also, $\left.\widetilde{T}(\lambda)\right \vert_{|\lambda|^2=\frac{n}{n+\gamma}}=\dfrac{\gamma^\gamma n^n}{(n+\gamma)^{n+\gamma}}.$\\
        Thus, $\left.\widetilde{T}(\lambda)\right\vert_{\text{min}}=0$ and $\left.\widetilde{T}(\lambda)\right\vert_{\text{max}}=\dfrac{\gamma^\gamma n^n}{(n+\gamma)^{(n+\gamma)}}.$ Since $T$ is bounded, $\widetilde{T}(\lambda)$ is a continuous real analytic function. Hence, by the intermediate value theorem, $\widetilde{T}(\lambda)$ takes on any real value between $0$ and $\dfrac{\gamma^\gamma n^n}{\left(n+\gamma\right)^{n+\gamma}}$ so that $\textnormal{Ber}(T)=\left[0,\;\dfrac{\gamma^\gamma n^n}{\left(n+\gamma\right)^{n+\gamma}}\right],$ which is a convex set in $\mathbb{C}.$
    \end{proof}
    \begin{remark}
        Putting $\gamma=1$ and $\gamma=2$ respectively in Theorem \ref{th:same integral power}, we see that the Berezin ranges of the rank-one operator $T$ given by $T(f)=\langle f,z^n\rangle z^n$ acting on the Hardy space $H^2(\mathbb{D})$ and the Bergman space $A^2(\mathbb{D})$ are, respectively, $\left[0,\;\dfrac{n^n}{(n+1)^{n+1}}\right]$ and $\left[0,\;\dfrac{4n^n}{(n+2)^{n+2}}\right],$ both of which are convex sets in $\mathbb{C}$, which have been proved recently by Augustine et al. in \cite{Athul2_Article_2024}.
    \end{remark}
    \begin{remark}
        Similarly to what is mentioned in Remark \ref{re:th_1}, for a fixed $n,\;\textnormal{Ber}(T)$ for the operator $T(f)=\langle f,z^n \rangle$ acting on $\mathcal{H}_\gamma(\mathbb{D})$ is contracting with increasing $\gamma,$ and finally when $\gamma \rightarrow \infty$, it tends to coincide with the origin. For a visual verification, see Figure \ref{fig:th_2_n=3_g=0.5,1,2,10.png}.
    \end{remark}

    \begin{figure}[htbp!]
        \centering
        \begin{subfigure}[b]{0.3\textwidth}
            \centering
            \includegraphics[width=\textwidth]{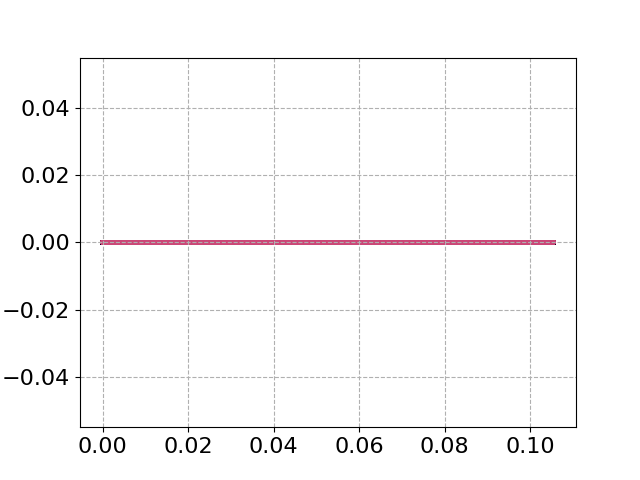}
            \caption{$\gamma=1$ (Hardy space)}
            \label{fig:th_2_g=1_n=3_fs=16_plasma.png}
        \end{subfigure}
        \hfill
        \begin{subfigure}[b]{0.3\textwidth}
            \centering
            \includegraphics[width=\textwidth]{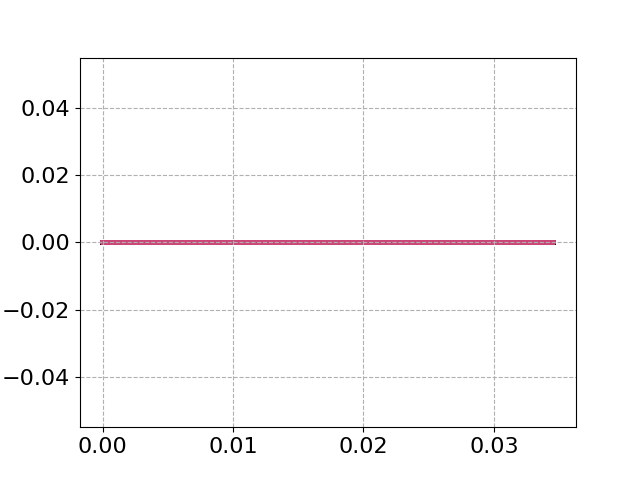}
            \caption{$\gamma=2$ (Bergman space)}
            \label{fig:th_2_g=2_n=3_fs=16_plasma.png}
        \end{subfigure}
        \hfill
        \begin{subfigure}[b]{0.3\textwidth}
            \centering
            \includegraphics[width=\textwidth]{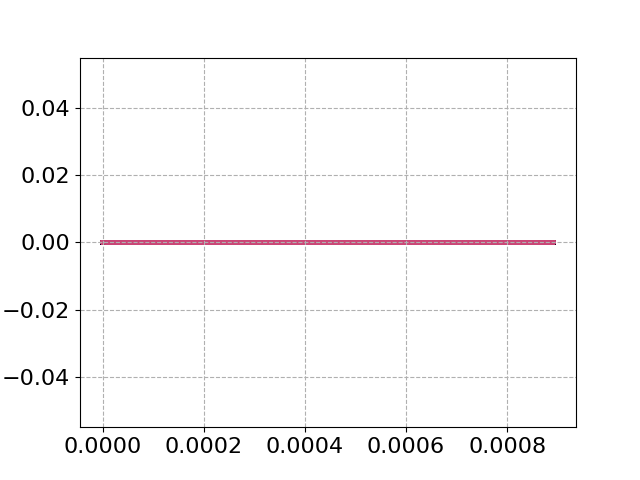}
            \caption{$\gamma=10$}
            \label{fig:th_2_g=10_n=3_fs=16_plasma.png}
        \end{subfigure}
    \caption{Comparison of Berezin ranges of the operator $T(f)=\langle f,z^3 \rangle z^3$ acting on the weighted Hardy space $\mathcal{H}_\gamma(\mathbb{D})$, for different values of $\gamma.$}
    \label{fig:th_2_n=3_g=0.5,1,2,10.png}
    \end{figure}

    In the following, we investigate the convexity of the Berezin range of a finite-rank operator of the form $T_n(f)=\sum_{i=1}^n \langle f,a_iz^i \rangle a_iz^i$, where for all $i,\;a_i\in \mathbb{C},$ acting on $\mathcal{H}_\gamma(\mathbb{D}).$
    
    \begin{theorem}\label{th:finite sum of inner product}
        Let $T_n(f)=\sum_{i=1}^{n}\langle f,a_iz^i\rangle a_iz^i$ be a finite-rank operator acting on the weighted Hardy space $\mathcal{H}_\gamma(\mathbb{D}),$ where $a_i\in \mathbb{C}.$ Then the Berezin range of $T_n$ is convex in $\mathbb{C}.$
    \end{theorem}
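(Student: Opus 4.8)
The plan is to mirror the arguments used for Theorem \ref{th:(f,z)z} and Theorem \ref{th:same integral power}: compute the Berezin transform of $T_n$ explicitly, observe that it is a function of $|\lambda|$ alone, and then deduce convexity from the fact that a continuous image of an interval is again an interval in $\mathbb{R}$.

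First I would evaluate $T_n$ on the reproducing kernels. Using $\langle k_\lambda,z^i\rangle=\overline{\langle z^i,k_\lambda\rangle}=\overline{\lambda^i}$ we get
\begin{equation*}
T_n(k_\lambda)=\sum_{i=1}^{n}\langle k_\lambda,a_iz^i\rangle a_iz^i=\sum_{i=1}^{n}|a_i|^{2}\,\overline{\lambda^{i}}\,z^{i},
\end{equation*}
so that, writing $t=|\lambda|^{2}\in[0,1)$ and $p(t)=\sum_{i=1}^{n}|a_i|^{2}t^{i}$,
\begin{equation*}
\widetilde{T_n}(\lambda)=(1-|\lambda|^{2})^{\gamma}(T_nk_\lambda)(\lambda)=(1-t)^{\gamma}\sum_{i=1}^{n}|a_i|^{2}t^{i}=(1-t)^{\gamma}p(t)=:\phi(t).
\end{equation*}
Thus $\widetilde{T_n}$ depends only on $t=|\lambda|^{2}$, and $\phi$ is a continuous (in fact real analytic) nonnegative function on $[0,1)$ with $\phi(0)=0$ and $\phi(t)\to 0$ as $t\to 1^{-}$, since $(1-t)^{\gamma}\to 0$ while $p(t)$ stays bounded.

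The conclusion then follows quickly. If all $a_i=0$ then $T_n=0$ and $\textnormal{Ber}(T_n)=\{0\}$, which is convex. Otherwise $p\not\equiv 0$, so $\phi>0$ at some interior point; being continuous on $[0,1)$ with limit $0$ at both ends, it attains a maximum $M>0$ at some point of $(0,1)$. Since $[0,1)$ is connected and $\phi$ is continuous, $\textnormal{Ber}(T_n)=\phi([0,1))$ is a connected subset of $\mathbb{R}$ that contains $0$ and $M$ and is contained in $[0,M]$; hence $\textnormal{Ber}(T_n)=[0,M]$, which is a convex set in $\mathbb{C}$.

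I do not foresee a genuine obstacle here. The only point worth flagging is that, unlike in Theorem \ref{th:(f,z)z} and Theorem \ref{th:same integral power}, the maximiser of $\phi$ is a root in $(0,1)$ of the degree-$n$ equation $(1-t)p'(t)=\gamma p(t)$, which has no closed form for general coefficients $a_i$; consequently the endpoint $M$ of $\textnormal{Ber}(T_n)$ cannot be written down explicitly. Convexity, however, only requires the connectedness argument above, which is precisely why the finite-sum case goes through with no more difficulty than the single-term cases already treated.
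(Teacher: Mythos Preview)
Your proof is correct and follows essentially the same approach as the paper: compute $\widetilde{T_n}(\lambda)=(1-|\lambda|^2)^\gamma\sum_{i=1}^n|a_i|^2|\lambda|^{2i}$, observe it is a continuous real-valued function of $|\lambda|$ alone, and conclude the range is an interval $[0,M]$ with $M=\max_{t\in(0,1)}(1-t)^\gamma\sum_i|a_i|^2t^i$. Your treatment is slightly tidier in that you separate out the degenerate case $a_i\equiv 0$ and make the connectedness argument explicit, and your closing remark about the lack of a closed form for $M$ is a useful addition.
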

    \begin{proof}
        For $\lambda\in \mathbb{D}$, we have
        \begin{equation*}
            T_n(k_\lambda)=\sum_{i=1}^{n}\langle k_\l,a_iz^i\rangle a_iz^i=\sum_{i=1}^{n}|a_i|^2\overline{\lambda}^iz^i,
        \end{equation*}
        so that the Berezin transform of $T$ at $\l$ is
        \begin{equation*}
        \begin{split}
             \widetilde{T_n}(\lambda)&=\langle T_n\widehat{k}_\lambda,\widehat{k}_\lambda\rangle \\
             &=\dfrac{1}{\|k_\l\|^2}\langle T_nk_\lambda,k_\lambda\rangle \\
             &=\left(1-|\lambda|^2\right)^\gamma \left(T_nk_\lambda\right)(\lambda)\\
             &=\left(1-|\lambda|^2\right)^\gamma \sum_{i=1}^{n}|a_i|^2|\lambda|^{2i}.
        \end{split}
      \end{equation*}
      
     Since $T_n$ is a finite rank operator, $\|T_n\|$ is finite. Then, since for any $\lambda\in \mathbb{D},\;\widetilde{T_n}(\lambda)\leq \sup_{\lambda\in \mathbb{D}}|\widetilde{T_n}(\lambda)| = \textnormal{ber}(T_n)\leq \|T_n\|,\;\widetilde{T_n}$ is a bounded function on $\mathbb{D}.$ Also, since $\mathcal{H}_\gamma(\mathbb{D})$ is the collection of holomorphic functions on $\mathbb{D}$ and since $\widetilde{T_n}(\lambda)=\widetilde{T_n}(|\lambda|),\;\widetilde{T_n}$ is a real analytic function and hence is continuous. \par Thus, the Berezin transform of $T_n$ is a real continuous function in the complex plane. Therefore, $\textnormal{Ber}(T_n)$ is convex in $\mathbb{C}.$ \par
    We note that for any $\lambda\in [0,1),\;\widetilde{T_n}(\lambda)\geq0;$ equality occurs when $|\lambda|=0$. Also, when $|\lambda|\rightarrow 1,\;\widetilde{T_n}(\lambda)\rightarrow 0.$ So, the maximum value of $\widetilde{T_n}(\lambda)$ occurs somewhere in $(0,1)$ and it depends on specific values of $a_i$'s. Thus, $\textnormal{Ber}(T_n)=\left[0,\;\text{max}_{\lambda\in (0,1)}\left \{\left(1-|\lambda|^2\right)^\gamma \sum_{i=1}^{n}|a_i|^2|\lambda|^{2i}\right \}\right].$
    \end{proof}
    \begin{remark}
        On putting $\gamma=1$ and $\gamma=2$ respectively in Theorem \ref{th:finite sum of inner product}, we see that the Berezin ranges of the finite rank operator $T_n$ given by $T_n(f)=\sum_{i=1}^n\langle f,a_iz^i\rangle z^i$ acting on the Hardy space $H^2(\mathbb{D})$ and the Bergman space $A^2(\mathbb{D})$ are $\left[0,\;\mbox{max}_{\lambda\in (0,1)}\left \{\left(1-|\lambda|^2\right) \sum_{i=1}^{n}|a_i|^2|\lambda|^{2i}\right \}\right]$ and $\left[0,\;\mbox{max}_{\lambda\in (0,1)}\left \{\left(1-|\lambda|^2\right)^2 \sum_{i=1}^{n}|a_i|^2|\lambda|^{2i}\right \}\right]$ respectively, both of which are convex sets in $\mathbb{C};$ in both cases, the Berezin ranges depend on the specific set of values of $a_i.$ For visual interpretation, see Figure \ref{fig:theorem_3,g=0.5,1,2,4}.
    \end{remark}
    
    \begin{figure}[htbp]
        \centering
        \begin{subfigure}[b]{0.3\textwidth}
            \centering
            \includegraphics[width=\textwidth]{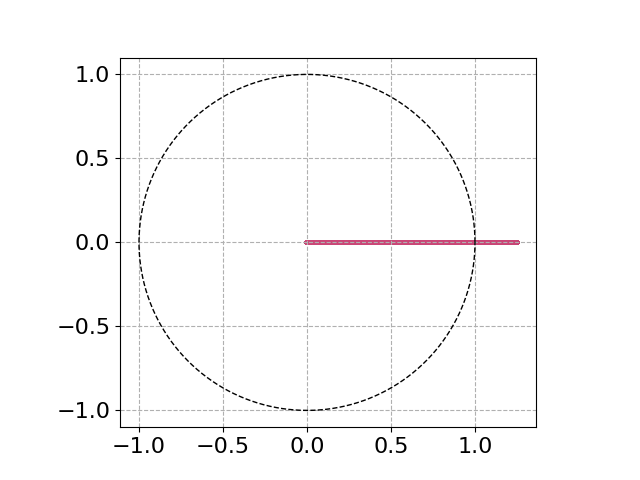}
            \caption{$\gamma=1$ (Hardy space)}
            \label{fig:th_3_g=1__1+i_,_1-i_,_i__fs=16_plasma.png}
        \end{subfigure}
        \hfill
        \begin{subfigure}[b]{0.3\textwidth}
            \centering
            \includegraphics[width=\textwidth]{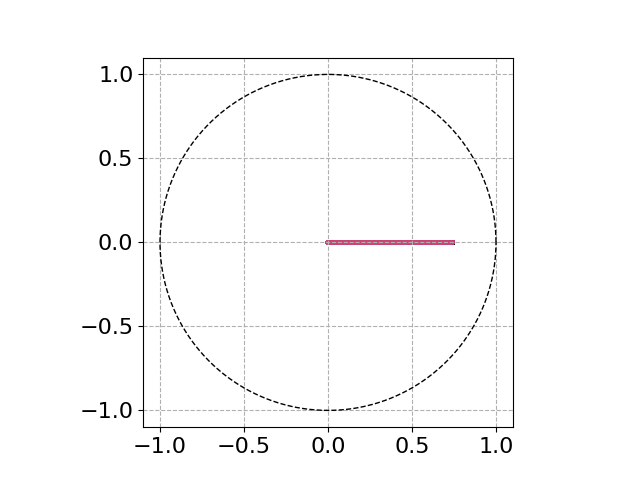}
            \caption{$\gamma=2$ (Bergman space)}
            \label{fig:th_3_g=2__1+i_,_1-i_,_i__fs=16_plasma.png}
        \end{subfigure}
        \hfill
        \begin{subfigure}[b]{0.3\textwidth}
            \centering
            \includegraphics[width=\textwidth]{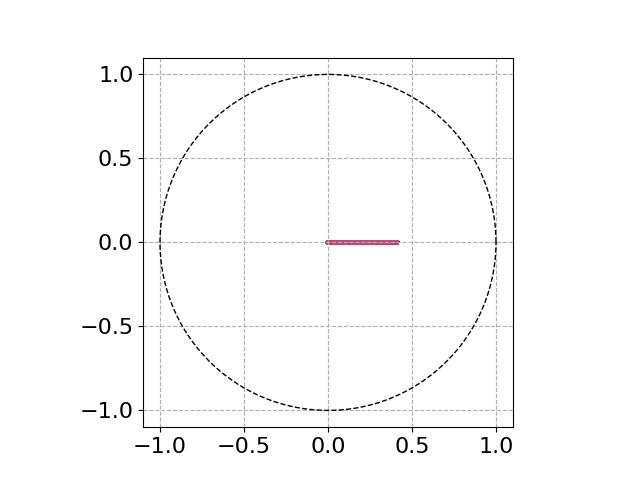}
            \caption{$\gamma=4$}
            \label{fig:th_3_g=4__1+i_,_1-i_,_i__fs=16_plasma.png}
        \end{subfigure}
    \caption{Berezin range of the operator $T(f)=\displaystyle\sum_{i=1}^{3}\langle f,a_iz^i\rangle a_iz^i$ acting on the weighted Hardy space $\mathcal{H}_\gamma(\mathbb{D})$ for different values of $\gamma,$ where $a_1=1+i,\;a_2=1-i,\;a_3=i.$}
    \label{fig:theorem_3,g=0.5,1,2,4}
    \end{figure}
    
    Now consider a compact self-adjoint operator of the form $T(f)=\sum_{n=1}^{\infty}{\langle f,az^n\rangle az^n}$, where $a\in \mathbb{D},$ acting on $\mathcal{H}_\gamma(\mathbb{D}).$ In the following theorem, we calculate the Berezin range of this operator.
    \begin{theorem}\label{th:sum_infty}
        Let $T(f)=\sum_{n=1}^{\infty}{\langle f,az^n\rangle az^n}$, where $a\in \mathbb{D},$ be a compact self-adjoint operator acting on $\mathcal{H}_\gamma(\mathbb{D}).$ Then the Berzin range of $T$ is given by 
        \begin{equation*}
            \textnormal{Ber}(T)=\begin{cases}
			\left[0,\;|a|^2\dfrac{(\gamma-1)^{\gamma-1}}{\gamma^{\gamma}}\right], & \text{if $\gamma>1$}\\
            \left[0,\;|a|^2\right), & \text{if $\gamma=1$}\\
            [0,\;\infty), & \text{if $0<\gamma<1$}
		 \end{cases};
        \end{equation*}
     in either case, which is convex in $\mathbb{C}.$   
    \end{theorem}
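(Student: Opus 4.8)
The plan is to follow the same template as Theorems~\ref{th:(f,z)z}--\ref{th:finite sum of inner product}: first compute the Berezin transform explicitly as a function of $r=|\lambda|\in[0,1)$, then analyze the resulting one-variable function. Since $T(f)=\sum_{n=1}^\infty\langle f,az^n\rangle az^n$, applying $T$ to $k_\lambda$ gives $T(k_\lambda)=\sum_{n=1}^\infty |a|^2\overline{\lambda}^n z^n$, and hence
\begin{equation*}
\widetilde{T}(\lambda)=\left(1-|\lambda|^2\right)^\gamma (Tk_\lambda)(\lambda)=\left(1-|\lambda|^2\right)^\gamma |a|^2\sum_{n=1}^\infty |\lambda|^{2n}=|a|^2\left(1-r^2\right)^\gamma\frac{r^2}{1-r^2}=|a|^2 r^2\left(1-r^2\right)^{\gamma-1},
\end{equation*}
using the geometric series (valid since $r<1$). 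So everything reduces to studying $g(r)=|a|^2 r^2(1-r^2)^{\gamma-1}$ on $[0,1)$; note the power $\gamma-1$ is what forces the three-case split.

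Next I would do the elementary calculus case by case, substituting $u=r^2\in[0,1)$ to study $h(u)=|a|^2 u(1-u)^{\gamma-1}$. For $\gamma>1$: $h$ is continuous on $[0,1]$, vanishes at $u=0$ and $u=1$, and $h'(u)=|a|^2\big[(1-u)^{\gamma-1}-(\gamma-1)u(1-u)^{\gamma-2}\big]=|a|^2(1-u)^{\gamma-2}[1-\gamma u]$ vanishes at $u=1/\gamma$, giving maximum value $|a|^2\frac{(\gamma-1)^{\gamma-1}}{\gamma^\gamma}$; since $h$ is continuous on the compact interval and the range of $r\mapsto g(r)$ sweeps $[0,\text{max}]$ by the intermediate value theorem, $\textnormal{Ber}(T)=\big[0,|a|^2\frac{(\gamma-1)^{\gamma-1}}{\gamma^\gamma}\big]$, a closed interval. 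For $\gamma=1$: $h(u)=|a|^2 u$ which is strictly increasing, so as $u\to 1^-$ the values approach but never attain $|a|^2$, giving the half-open interval $[0,|a|^2)$. For $0<\gamma<1$: $(1-u)^{\gamma-1}\to\infty$ as $u\to1^-$ while the factor $u\to1$, so $h(u)\to\infty$; since $h$ is continuous with $h(0)=0$, its range is $[0,\infty)$. In every case the set is an interval, hence convex in $\mathbb{C}$.

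The only genuinely delicate point is the boundary behaviour, i.e. distinguishing \emph{attained} versus merely \emph{approached} suprema — this is exactly why the cases $\gamma>1$ and $\gamma=1$ produce a closed versus a half-open interval, and why the $0<\gamma<1$ case is an unbounded interval even though $T$ is assumed compact and self-adjoint. One should double-check consistency with compactness: for $0<\gamma<1$ the operator $T$ is genuinely unbounded as written (the series need not converge), so presumably the statement tacitly assumes the parameters make $T$ a bona fide compact operator, or the claim is really about the formal expression; I would add a remark clarifying that for $0<\gamma<1$ the displayed ``$[0,\infty)$'' reflects that $\sup_r g(r)=\infty$. I would also note the sanity check $\gamma=2$ recovers $[0,|a|^2/4]$, matching the Bergman-space specialisation of Theorem~\ref{th:(f,z)z} (with $a\in\mathbb{D}$ scaling). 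The rest is routine; no step beyond the boundary-attainment bookkeeping should present difficulty.
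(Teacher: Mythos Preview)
Your proposal is correct and follows essentially the same route as the paper: compute $\widetilde{T}(\lambda)=|a|^2|\lambda|^2(1-|\lambda|^2)^{\gamma-1}$ via the geometric series, then split into the three cases $\gamma>1$, $\gamma=1$, $0<\gamma<1$ and do the one-variable calculus (the paper differentiates in $|\lambda|$ rather than substituting $u=r^2$, but the critical point $|\lambda|^2=1/\gamma$ and the resulting intervals are identical). Your remark that for $0<\gamma<1$ the unbounded Berezin range is in tension with $T$ being compact is a worthwhile observation the paper does not address.
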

\begin{proof}
    For $\lambda\in \mathbb{D}$, we have
        \begin{equation*}
            T(k_\lambda)=\sum_{n=1}^{\infty}\langle k_\lambda,az^n\rangle az^n=|a|^2\sum_{n=1}^{\infty}\overline{\lambda}^nz^n,
        \end{equation*}
        so that the Berezin transform of $T$ at $\lambda$ is
        \begin{equation*}
        \begin{split}
             \widetilde{T}(\lambda)&=\langle T\widehat{k}_\lambda,\widehat{k}_\lambda\rangle \\
             &=\dfrac{1}{\|k_\l\|^2}\langle Tk_\lambda,k_\lambda\rangle \\
             &=\left(1-|\lambda|^2\right)^\gamma \left(Tk_\lambda\right)(\lambda)\\
             &=\left(1-|\lambda|^2\right)^\gamma |a|^2\sum_{n=1}^{\infty}|\lambda|^{2n}\\
             &=\left(1-|\lambda|^2\right)^\gamma |a|^2 \dfrac{|\lambda|^2}{1-|\lambda|^2}\\
             &=|a|^2|\lambda|^2\left(1-|\lambda|^2\right)^{\gamma-1},
        \end{split}
      \end{equation*}
             where $|\lambda|\in[0,1).$ We observe that $\widetilde{T}(\lambda)=\widetilde{T}(|\lambda|)$ is a real analytic function. Now we search for the extreme points of $\widetilde{T}(\lambda)$. We differentiate $\widetilde{T}(\lambda)$ with respect to $|\lambda|$ to have
        \begin{equation*}
            \dfrac{d}{d|\lambda|}\left[\widetilde{T}(\lambda)\right]=2|a|^2|\lambda|\left(1-|\lambda|^2\right)^{\gamma-2}\left(1-\gamma|\lambda|^2\right).
        \end{equation*}
        \begin{case}
            $\gamma>1.$
        \end{case}
        In this case, $ \dfrac{d}{d|\lambda|}\left[\widetilde{T}(\lambda)\right]=0,\;\text{when}\; |\lambda|^2=0,\;\dfrac{1}{\gamma}.$ \par
        Now, $\left.\widetilde{T}(\lambda)\right \vert_{|\lambda|^2=0}=0.$ Also, $\left.\widetilde{T}(\lambda)\right \vert_{|\lambda|^2=\frac{1}{\gamma}}=|a|^2\dfrac{(\gamma-1)^{\gamma-1}}{\gamma^{\gamma}}.$\par
        Thus, $\left.\widetilde{T}(\lambda)\right\vert_{\text{min}}=0$ and $\left.\widetilde{T}(\lambda)\right\vert_{\text{max}}=|a|^2\dfrac{(\gamma-1)^{\gamma-1}}{\gamma^{\gamma}}.$ Since $T$ is compact, it is bounded; so, $\widetilde{T}(\lambda)$ is a continuous real analytic function. Hence by Intermediate value theorem, $\widetilde{T}(\lambda)$ takes on any real value between $0$ and $|a|^2\dfrac{(\gamma-1)^{\gamma-1}}{\gamma^{\gamma}}$, so that $\textnormal{Ber}(T)=\left[0,\;|a|^2\dfrac{(\gamma-1)^{\gamma-1}}{\gamma^{\gamma}}\right],$ which is a convex set in $\mathbb{C}.$\par
        \begin{case}
            $\gamma=1.$
        \end{case}
        In this case, $\dfrac{d}{d\lambda}\left[\widetilde{T}(\lambda)\right]=2|a|^2|\lambda|>0$ for $|\lambda|>0.$ This shows that the real valued function $\widetilde{T}$ increases with increasing $|\lambda|,$ and finally when $|\lambda|\rightarrow 1,\;\widetilde{T}(\lambda)\rightarrow |a|^2.$ Since $\widetilde{T}$ is continuous, the Berezin range of $T,$ in this case, is given by $\textnormal{Ber}(T)=\left[0,\;|a|^2\right),$ which, being a half open line segment on the real axis is convex in $\mathbb{C}.$ (This result has already been proved in \cite{Athul2_Article_2024})
        \begin{case}
            $0<\lambda<1.$
        \end{case}
        In this case, we observe that for any $\lambda,\;\widetilde{T}(\lambda)\geq0;$ equality occurs when $|\lambda|=0$. Also, when $|\lambda|\rightarrow 1,\;\widetilde{T}(\lambda)\rightarrow \infty.$ Since $\widetilde{T}$ is a continuous real analytic function, $\textnormal{Ber}(T)=[0,\;\infty),$ i.e., the Berezin range of $T$, in this case, is the entire positive real axis, clearly which is convex in $\mathbb{C}.$
    \end{proof}
    \begin{remark}
        Putting $\gamma=2$ in Theorem \ref{th:sum_infty}, we find that the Berzin range of the compact self-adjoint operator $T$ given by $T(f)=\langle f,az^n\rangle az^n,$ where $a\in \mathbb{D},$ acting on the Bergman space is $\left[0,\dfrac{|a|^2}{4}\right]$, which has already been proved in \cite{Athul2_Article_2024}.
        \end{remark}
    For visual verification of Theorem \ref{th:sum_infty}, see Figure \ref{fig:theorem_4_g=1,2,4_a=(1+i)/2}
        
       \begin{figure}[htbp]
           \centering
           \begin{subfigure}[b]{0.3\textwidth}
               \centering
               \includegraphics[width=\textwidth]{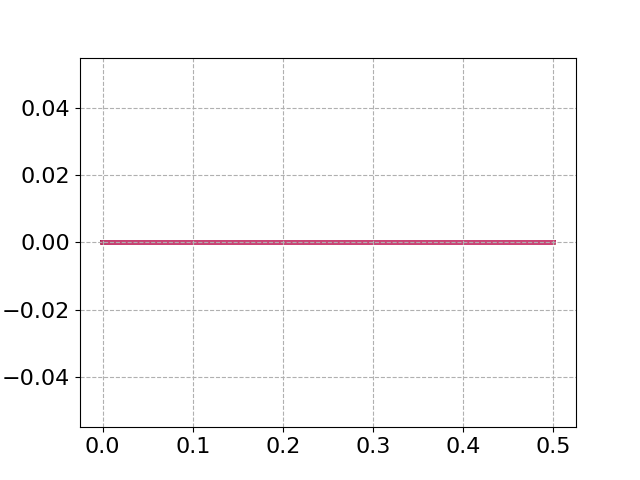}
               \caption{$\gamma=1$ (Hardy space)}
               \label{fig:th_4_g=1_a=_1+i_by_2__fs=16_plasma.png}
           \end{subfigure}
           \hfill
           \begin{subfigure}[b]{0.3\textwidth}
               \centering
               \includegraphics[width=\textwidth]{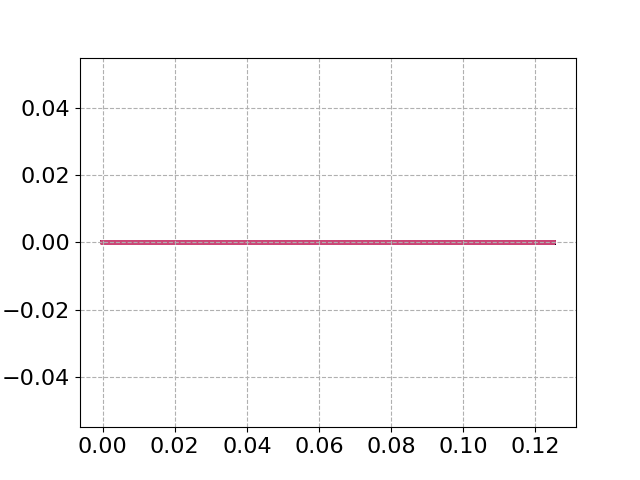}
               \caption{$\gamma=2$ (Bergman space)}
               \label{fig:th_4_g=2_a=_1+i_by_2__fs=16_plasma.png}
           \end{subfigure}
           \hfill
           \begin{subfigure}[b]{0.3\textwidth}
               \centering
               \includegraphics[width=\textwidth]{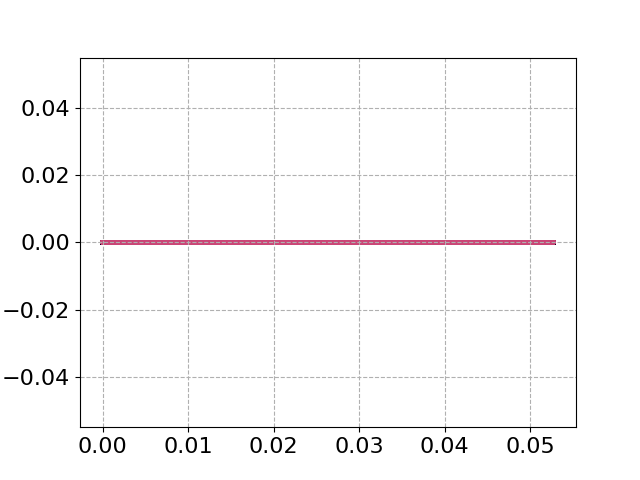}
               \caption{$\gamma=4$}
               \label{fig:th_4_g=4_a=_1+i_by_2__fs=16_plasma.png}
           \end{subfigure}
        \caption{Berezin range of the operator $\displaystyle\sum_{n=1}^{\infty}\langle f,az^n\rangle az^n$ acting on the weighted Hardy space $\mathcal{H}_\gamma(\mathbb{D}),$ for different values of $\gamma,$ where $a=\dfrac{1}{2}(1+i)$. [Note that here $|a|^2=0.5$]}
        \label{fig:theorem_4_g=1,2,4_a=(1+i)/2}
       \end{figure}
             
        In the following, we generalize Theorem \ref{th:finite sum of inner product}, by replacing the set of functions $\left\{a_iz^i\right\}_{i=1}^n$ by an arbitrary set of functions $\left\{g_i(z)\right\}_{i=1}^n$in $\mathcal{H}_\gamma(\mathbb{D}).$
        \begin{theorem}\label{th:generalized finite sum}
            Let for an arbitrary set of functions $g_i,\;i=1(n),$ the operator $T_n$ defined by $T_n(f)=\sum_{i=1}^n  {\langle f, g_i \rangle g_i}$ be a finite-rank operator acting on the weighted Hardy space $\mathcal{H}_\gamma(\mathbb{D})$. Then the Berezin range of $T_n$ is convex in $\mathbb{C}$.
        \end{theorem}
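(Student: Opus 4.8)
The key observation — shared with all the preceding theorems — is that for any finite collection $g_i$ the Berezin transform of $T_n(f)=\sum_{i=1}^n\langle f,g_i\rangle g_i$ depends only on $|\lambda|$ and is continuous. So the strategy is:

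\textbf{Step 1: Compute the Berezin transform.} For $\lambda\in\D$, $T_n(k_\lambda)=\sum_{i=1}^n\langle k_\lambda,g_i\rangle g_i=\sum_{i=1}^n\overline{g_i(\lambda)}\,g_i$, using the reproducing property $\langle k_\lambda,g_i\rangle=\overline{g_i(\lambda)}$. Hence
\begin{equation*}
\widetilde{T_n}(\lambda)=\frac{1}{\|k_\lambda\|^2}\langle T_nk_\lambda,k_\lambda\rangle=(1-|\lambda|^2)^\gamma(T_nk_\lambda)(\lambda)=(1-|\lambda|^2)^\gamma\sum_{i=1}^n|g_i(\lambda)|^2.
\end{equation*}
In particular $\widetilde{T_n}(\lambda)\ge 0$ for all $\lambda\in\D$, so $\textnormal{Ber}(T_n)\subseteq[0,\infty)\subseteq\R$.

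\textbf{Step 2: Continuity and connectedness.} Each $g_i$ is holomorphic on $\D$, hence continuous, so $\lambda\mapsto\widetilde{T_n}(\lambda)$ is a continuous real-valued function on the connected set $\D$. The continuous image of a connected set is connected, and the connected subsets of $\R$ are exactly the intervals; therefore $\textnormal{Ber}(T_n)$ is an interval, and an interval in $\R\subseteq\C$ is convex. Since $\widetilde{T_n}(0)=\sum_{i=1}^n|g_i(0)|^2$ and, letting $|\lambda|\to 1$, the factor $(1-|\lambda|^2)^\gamma\to 0$ while $\sum|g_i(\lambda)|^2$ need not vanish, one can describe the interval more explicitly as $\big[\,m,\,M\,\big]$ or $\big[\,m,\,M\,\big)$ where $m=\inf_{\lambda\in\D}\widetilde{T_n}(\lambda)$ and $M=\sup_{\lambda\in\D}\widetilde{T_n}(\lambda)$; boundedness of $M$ is guaranteed because $\widetilde{T_n}(\lambda)\le\textnormal{ber}(T_n)\le\|T_n\|<\infty$ as $T_n$ has finite rank.

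\textbf{Main obstacle.} There is essentially no obstacle: the argument is softer than in Theorems \ref{th:(f,z)z}--\ref{th:sum_infty}, because we never need to locate the maximizer or identify the endpoints in closed form — we only need that the Berezin transform is a continuous function of $|\lambda|$ taking values in $\R$, which forces convexity via connectedness. The only point deserving care is noting that $\textnormal{Ber}(T_n)$ lies in $\R$ (so that "interval" is the same as "convex subset of $\C$"), which follows from the manifest nonnegativity of $(1-|\lambda|^2)^\gamma\sum_{i=1}^n|g_i(\lambda)|^2$; this in turn reflects the fact that $T_n$ is a positive operator. One may optionally remark that the same proof shows $\textnormal{Ber}(T)$ is convex for \emph{any} positive operator $T$ whose Berezin transform is continuous on a connected parameter set, which is the structural reason all of these examples behave uniformly.
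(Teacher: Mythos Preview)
Your proof is correct and follows essentially the same route as the paper: compute $\widetilde{T_n}(\lambda)=(1-|\lambda|^2)^\gamma\sum_{i=1}^n|g_i(\lambda)|^2$, observe this is a continuous real-valued function on the connected set $\mathbb{D}$, and conclude its image is an interval in $\mathbb{R}$, hence convex in $\mathbb{C}$. One correction to your framing: the opening claim that the Berezin transform ``depends only on $|\lambda|$'' is false for general $g_i$ (e.g.\ $g_1(z)=1+z$ gives $|g_1(\lambda)|^2=|1+\lambda|^2$, which depends on $\arg\lambda$); this radial dependence held in the earlier theorems only because $g_i=a_iz^i$ there. Fortunately your Step~2 never actually uses radial dependence---you argue via continuity on the connected domain $\mathbb{D}$, which is precisely the paper's argument---so the proof stands once that sentence in the plan is removed.
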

        \begin{proof}
            For $\lambda\in \mathbb{D}$, we have
        \begin{equation*}
            T_n(k_\lambda)=\sum_{i=1}^{n}\langle k_\lambda,g_i\rangle g_i=\sum_{i=1}^{n}\overline{g_i(\lambda)} g_i(z)
        \end{equation*}
        so that the Berezin transform of $T_n$ at $\lambda$ is
        \begin{equation*}
        \begin{split}
             \widetilde{T_n}(\lambda)&=\langle T_n\widehat{k}_\lambda,\widehat{k}_\lambda\rangle \\
             &=\dfrac{1}{\|k_\lambda\|^2}\langle T_nk_\lambda,k_\lambda\rangle \\
             &=\left(1-|\lambda|^2\right)^\gamma \left(T_nk_\lambda\right)(\lambda)\\
             &=\left(1-|\lambda|^2\right)^\gamma \sum_{i=1}^{n}|g_i(\lambda)|^2,
        \end{split}
      \end{equation*}
      where $\lambda \in \mathbb{D},$ i.e., $|\lambda| \in [0,1).$\par
      Since for each $i=1(n),\;g_i \in \mathcal{H}_\gamma(\mathbb{D})$ is a holomorphic function in $\mathbb{D},\;\widetilde{T_n}(\lambda)$ is a continuous function of $\lambda.$ This implies that $\widetilde{T_n}(\lambda)$ is a real continuous function on the open unit disk $\mathbb{D}.$ Therefore, $\mathbb{D}$ being a connected set in $\mathbb{C},$ its image under $\widetilde{T_n}$ must be a connected set in $\mathbb{R}.$ Now, we know that on the real line the only connected subsets are the intervals including degenerate intervals consisting of a single point. Thus, the range of $\widetilde{T_n}$ is an interval on the real line. As an interval is a convex set in the complex plane $\mathbb{C},$ it follows that $\textnormal{Ber}(T_n)$ is convex in $\mathbb{C}.$
        \end{proof}
        In the following, with the aid of computer, we plot the Berezin ranges of the operator $T_2(f)=\langle f,\sin z \rangle \sin z+\langle f,z^2 \rangle z^2$ acting on different spaces (see Figure \ref{fig:(f,sinz)sinz+(f,z^2)z^2}).
        
    \begin{figure}[htbp]
        \centering
        \begin{subfigure}[b]{0.3\textwidth}
            \centering
            \includegraphics[width=\textwidth]{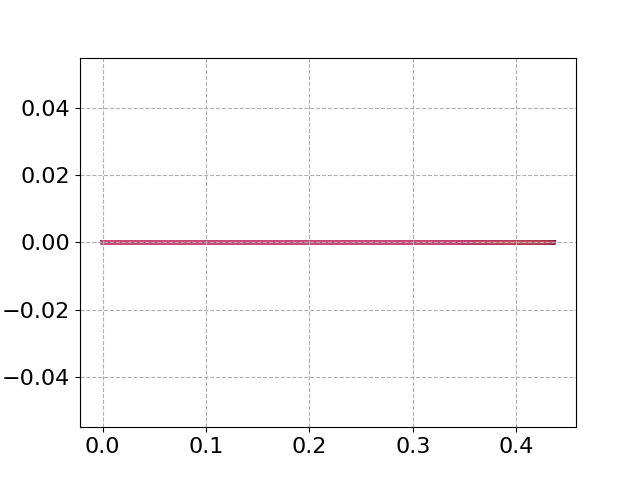}
            \caption{$\gamma=1$ (Hardy space)}
            \label{fig:th_5_g=1_sinz,z_2_fs=16_plasma.png}
        \end{subfigure}
        \hfill
        \begin{subfigure}[b]{0.3\textwidth}
            \centering
            \includegraphics[width=\textwidth]{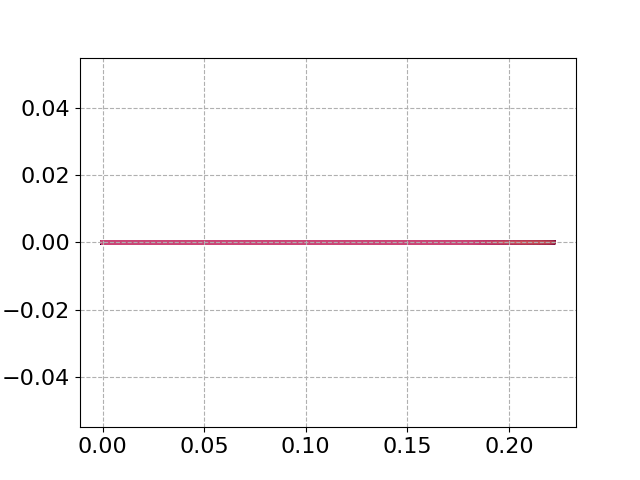}
            \caption{$\gamma=2$ (Bergman space)}
            \label{fig:th_5_g=2_sinz,z_2_fs=16_plasma.png}
        \end{subfigure}
        \hfill
        \begin{subfigure}[b]{0.3\textwidth}
            \centering
            \includegraphics[width=\textwidth]{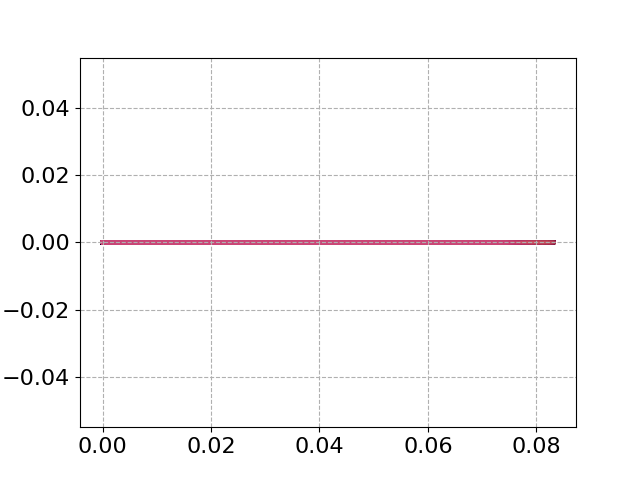}
            \caption{$\gamma=5$}
            \label{fig:th_5_g=5_sinz,z_2_fs=16_plasma.png}
        \end{subfigure}
    \caption{Berezin ranges of $T_2(f)=\langle f,\sin z \rangle \sin z+\langle f,z^2 \rangle z^2$ acting on $\mathcal{H}_\gamma(\mathbb{D}),$ for different values of $\gamma.$}
    \label{fig:(f,sinz)sinz+(f,z^2)z^2}
    \end{figure}
    
  \par Now consider a more generalization of the operator $T_n$ described in Theorem \ref{th:generalized finite sum} by taking $T_n(f)=\sum_{i=1}^{n}\langle f,g_i\rangle h_i$ for arbitrary set of functions $\left\{g_i\right\},\left\{h_i\right\}$ in $\mathcal{H}_\gamma(\mathbb{D}).$ In the following, we will show that the Berezin range of an operator of this form is not always convex. To do this, first, we prove the following theorem and its corollary.
   \begin{theorem}\label{th:more generalized finite sum}
       Let $T_n(f)=\sum_{i=1}^n  {\langle f, g_i \rangle h_i},$ where $f,g_i,h_i\in \mathcal{H}_\gamma(\mathbb{D})$ be a finite-rank operator acting on the weighted Hardy space $\mathcal{H}_\gamma(\mathbb{D})$. If all the functions $g_i$ and $h_i$ have power series representations about the origin with real coefficients, then the Berezin range of $T_n,\; \textnormal{Ber}(T_n)$ is closed under complex conjugation and hence is symmetric with respect to the real axis.
   \end{theorem}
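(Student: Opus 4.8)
The plan is to compute the Berezin transform $\widetilde{T_n}(\lambda)$ explicitly and show it is invariant under $\lambda \mapsto \overline{\lambda}$, which forces $\textnormal{Ber}(T_n)$ to be symmetric about the real axis. First I would evaluate $T_n(k_\lambda)$. Since $\langle k_\lambda, g_i\rangle = \overline{\langle g_i, k_\lambda\rangle} = \overline{g_i(\lambda)}$, we get $T_n(k_\lambda) = \sum_{i=1}^n \overline{g_i(\lambda)}\, h_i$, and therefore
\begin{equation*}
    \widetilde{T_n}(\lambda) = \left(1-|\lambda|^2\right)^\gamma \left(T_nk_\lambda\right)(\lambda) = \left(1-|\lambda|^2\right)^\gamma \sum_{i=1}^{n} \overline{g_i(\lambda)}\, h_i(\lambda),
\end{equation*}
exactly as in the proof of Theorem \ref{th:generalized finite sum}, but now without the conjugate-symmetry that made each summand real.

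The key observation is the hypothesis that each $g_i$ and each $h_i$ has a power series expansion $g_i(z) = \sum_{m\ge 0} c_m^{(i)} z^m$ and $h_i(z) = \sum_{m\ge 0} d_m^{(i)} z^m$ with all $c_m^{(i)}, d_m^{(i)} \in \mathbb{R}$. For any holomorphic function $\varphi$ with real Taylor coefficients about the origin one has $\varphi(\overline{\lambda}) = \overline{\varphi(\lambda)}$ for $\lambda \in \mathbb{D}$ (termwise: $\overline{\lambda}^m = \overline{\lambda^m}$ and the coefficients are fixed under conjugation). Applying this to both $g_i$ and $h_i$, I would compute
\begin{equation*}
    \widetilde{T_n}(\overline{\lambda}) = \left(1-|\overline{\lambda}|^2\right)^\gamma \sum_{i=1}^{n} \overline{g_i(\overline{\lambda})}\, h_i(\overline{\lambda}) = \left(1-|\lambda|^2\right)^\gamma \sum_{i=1}^{n} g_i(\lambda)\, \overline{h_i(\lambda)} = \overline{\widetilde{T_n}(\lambda)},
\end{equation*}
using $|\overline{\lambda}| = |\lambda|$, $\overline{g_i(\overline{\lambda})} = \overline{\overline{g_i(\lambda)}} = g_i(\lambda)$, $h_i(\overline{\lambda}) = \overline{h_i(\lambda)}$, and the fact that the factor $(1-|\lambda|^2)^\gamma$ is real. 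Hence for every $w \in \textnormal{Ber}(T_n)$, writing $w = \widetilde{T_n}(\lambda)$ for some $\lambda \in \mathbb{D}$, we have $\overline{w} = \widetilde{T_n}(\overline{\lambda}) \in \textnormal{Ber}(T_n)$ since $\overline{\lambda} \in \mathbb{D}$. Thus $\textnormal{Ber}(T_n)$ is closed under complex conjugation, which is precisely the statement that it is symmetric with respect to the real axis.

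There is no serious obstacle here; the argument is essentially bookkeeping with conjugates, and the only point requiring a word of care is the justification that a power series with real coefficients satisfies $\varphi(\overline{\lambda}) = \overline{\varphi(\lambda)}$ on $\mathbb{D}$, together with noting that $\mathbb{D}$ is itself symmetric under conjugation so that $\overline{\lambda}$ remains in the domain. I would also remark that this result explains, and sets up, the forthcoming examples showing that $\textnormal{Ber}(T_n)$ need not be convex: a symmetric-about-the-real-axis planar set can easily fail to be convex (for instance, a disjoint union of two complex-conjugate arcs), so Theorem \ref{th:more generalized finite sum} is the structural constraint against which non-convexity will be exhibited.
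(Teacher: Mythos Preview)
Your proof is correct and follows essentially the same approach as the paper: compute $\widetilde{T_n}(\lambda) = (1-|\lambda|^2)^\gamma \sum_i \overline{g_i(\lambda)}\,h_i(\lambda)$ and use the real-coefficient hypothesis to deduce $\widetilde{T_n}(\overline{\lambda}) = \overline{\widetilde{T_n}(\lambda)}$, hence closure under conjugation. The paper carries out the conjugation step termwise on the power series rather than invoking the identity $\varphi(\overline{\lambda}) = \overline{\varphi(\lambda)}$ directly, but the substance is identical.
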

   \begin{proof}
       Let $g_i=\sum_{m=0}^{\infty}a_{i,m}z^m$ and $h_i=\sum_{m=0}^{\infty}b_{i,m}z^m$, where $a_{i,m},\;b_{i,m}\in\mathbb{R};\;i=1(n).$ Then, for any $\lambda\in\mathbb{D},$ we have
       \begin{equation*}
            T_n(k_\lambda)=\sum_{i=1}^{n}\langle k_\lambda,g_i\rangle h_i=\sum_{i=1}^{n}\overline{g_i(\lambda)} h_i(z)=\sum_{i=1}^{n}\left(\overline{\sum_{m=0}^{\infty}a_{i,m}\lambda^m}\sum_{m=0}^{\infty}b_{i,m}z^m\right),
        \end{equation*}
        so that the Berezin transform of $T_n$ at $\lambda$ is
        \begin{equation*}
        \begin{split}
             \widetilde{T_n}(\lambda)&=\langle T_n\widehat{k}_\lambda,\widehat{k}_\lambda\rangle \\
             &=\dfrac{1}{\|k_\lambda\|^2}\langle T_nk_\lambda,k_\lambda\rangle \\
             &=\left(1-|\lambda|^2\right)^\gamma \left(T_nk_\lambda\right)(\lambda)\\
             &=\left(1-|\lambda|^2\right)^\gamma \sum_{i=1}^{n}\left(\overline{\sum_{m=0}^{\infty}a_{i,m}\lambda^m}\sum_{m=0}^{\infty}b_{i,m}\lambda^m\right),
        \end{split}
      \end{equation*}
      where $\lambda\in\mathbb{D}.$ Then,
      \begin{equation*}
      \begin{split}
           \overline{\widetilde{T_n}(\lambda)}&=\left(1-|\overline{\lambda}|^2\right)^\gamma \overline{\sum_{i=1}^{n}\left(\overline{\sum_{m=0}^{\infty}a_{i,m}\lambda^m}\sum_{m=0}^{\infty}b_{i,m}\lambda^m\right)}\\
           &=\left(1-|\lambda|^2\right)^\gamma\sum_{i=1}^{n}\left(\overline{\overline{\sum_{m=0}^{\infty}a_{i,m}\lambda^m}\sum_{m=0}^{\infty}b_{i,m}\lambda^m}\right)\\
           &=\left(1-|\lambda|^2\right)^\gamma\sum_{i=1}^n\left(\sum_{m=0}^\infty a_{i,m}\lambda^m\sum_{m=0}^\infty b_{i,m}\overline{\lambda}^m\right)\,;~~\mbox{since}\;a_{i,m},\;b_{i,m}\in \mathbb{R};\;i=1(n).\\
           &=\left(1-|\lambda|^2\right)^\gamma\sum_{i=1}^n\left(\overline{\sum_{m=0}^\infty a_{i,m}\overline{\lambda}^m}\sum_{m=0}^\infty b_{i,m}\overline{\lambda}^m\right)\\
           &=\widetilde{T_n}\left(\overline{\lambda}\right).
      \end{split}
    \end{equation*}
    \par Since $\lambda\in \mathbb{D},\;\overline{\lambda}\in\mathbb{D}.$ So, $\widetilde{T_n}\left(\overline{\lambda}\right)\in \textnormal{Ber}(T_n),$ i.e., $\overline{\widetilde{T_n}(\lambda)}\in Ber(T_n).$ This shows that the conjugate of any number in $\textnormal{Ber}(T_n)$ is also in $\textnormal{Ber}(T_n).$ In other words, $\textnormal{Ber}(T_n)$ is closed under complex conjugation.    
   \end{proof}
   \begin{corollary}\label{cor:real_part}
       Let $T_n(f)=\sum_{i=1}^n  {\langle f, g_i \rangle h_i},$ where $f,g_i,h_i\in \mathcal{H}_\gamma(\mathbb{D})$ be a finite-rank operator acting on the weighted Hardy space $\mathcal{H}_\gamma(\mathbb{D})$. If all the functions $g_i$ and $h_i$ have power series representations about the origin with real coefficients, then the following statement holds. If the Berezin range of $T_n,\;\textnormal{Ber}(T_n)$ is convex in $\mathbb{C},$, then $\mathcal{R}\left\{\widetilde{T_n}(\lambda)\right\}\in \textnormal{Ber}(T_n)$ for all $\lambda\in \mathbb{D},$ where $\mathcal{R}\left\{\widetilde{T_n}(\lambda)\right\}$ denotes the real part of $\widetilde{T_n}(\lambda).$
    \end{corollary}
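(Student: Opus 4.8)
The plan is to deduce this directly from Theorem \ref{th:more generalized finite sum} together with the defining property of a convex set. Fix an arbitrary $\lambda \in \mathbb{D}$ and write $z = \widetilde{T_n}(\lambda)$, so that $z \in \textnormal{Ber}(T_n)$ by definition of the Berezin range. First I would invoke Theorem \ref{th:more generalized finite sum}: since every $g_i$ and $h_i$ has a power series expansion about the origin with real coefficients, the computation there shows $\overline{\widetilde{T_n}(\lambda)} = \widetilde{T_n}(\overline{\lambda})$. Because $|\overline{\lambda}| = |\lambda| < 1$, the point $\overline{\lambda}$ again lies in $\mathbb{D}$, and therefore $\overline{z} = \widetilde{T_n}(\overline{\lambda}) \in \textnormal{Ber}(T_n)$; equivalently, $\textnormal{Ber}(T_n)$ is closed under complex conjugation.

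Next I would use the hypothesis that $\textnormal{Ber}(T_n)$ is convex. Having exhibited both $z$ and $\overline{z}$ as elements of $\textnormal{Ber}(T_n)$, convexity gives that the midpoint $\tfrac{1}{2} z + \tfrac{1}{2}\overline{z}$ belongs to $\textnormal{Ber}(T_n)$. Since $\tfrac{1}{2}\bigl(z + \overline{z}\bigr) = \mathcal{R}(z) = \mathcal{R}\{\widetilde{T_n}(\lambda)\}$, this is exactly the assertion $\mathcal{R}\{\widetilde{T_n}(\lambda)\} \in \textnormal{Ber}(T_n)$. As $\lambda \in \mathbb{D}$ was arbitrary, the corollary follows.

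In truth there is no serious obstacle here: the statement is an immediate consequence of the symmetry established in Theorem \ref{th:more generalized finite sum} and the elementary fact that a convex set containing a complex number and its conjugate must contain their average, which is the real part. The only points worth stating explicitly are that $\overline{\lambda}$ stays inside the disc (so that $\widetilde{T_n}(\overline{\lambda})$ is genuinely a member of the Berezin range, not merely a formal value) and that $\tfrac12(z+\overline z)$ is precisely $\mathcal{R}(z)$. One could optionally remark that this gives a convenient necessary condition for convexity: if there exists $\lambda \in \mathbb{D}$ with $\mathcal{R}\{\widetilde{T_n}(\lambda)\} \notin \textnormal{Ber}(T_n)$, then $\textnormal{Ber}(T_n)$ fails to be convex, a criterion that is put to use in the non-convexity examples that follow.
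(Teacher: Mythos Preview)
Your proposal is correct and follows essentially the same argument as the paper: invoke Theorem \ref{th:more generalized finite sum} to obtain $\overline{\widetilde{T_n}(\lambda)}\in\textnormal{Ber}(T_n)$, then use convexity to conclude that the midpoint $\tfrac{1}{2}\widetilde{T_n}(\lambda)+\tfrac{1}{2}\overline{\widetilde{T_n}(\lambda)}=\mathcal{R}\{\widetilde{T_n}(\lambda)\}$ lies in $\textnormal{Ber}(T_n)$. Your added remarks (that $\overline{\lambda}\in\mathbb{D}$ and that the contrapositive furnishes a non-convexity test) are accurate and indeed anticipate how the corollary is applied in the subsequent example.
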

    \begin{proof}
        Suppose, $\textnormal{Ber}(T_n)$ be convex in $\mathbb{C}$. Let $\lambda\in \mathbb{D}.$ Since by Theorem \ref{th:more generalized finite sum}, $\textnormal{Ber}(T_n)$ is closed under complex conjugation, we find that $\overline{\widetilde{T_n}(\lambda})\in \textnormal{Ber}(T_n).$ Then, as $\textnormal{Ber}(T_n)$ is convex in $\mathbb{C},$ we have
        \begin{equation*}
            \dfrac{1}{2}\widetilde{T_n}(\lambda)+\dfrac{1}{2}\overline{\widetilde{T_n}(\lambda)}=\mathcal{R}\left\{\widetilde{T_n}(\lambda)\right\}\in \textnormal{Ber}(T_n).
        \end{equation*}
    \end{proof}
    With the aid of Corollary \ref{cor:real_part}, we are now in a position to show that the Berezin range of a finite-rank operator of the form $T(f)=\sum_{i=1}^n\langle f,g_i\rangle h_i;\;g_i,h_i\in \mathcal{H}_\gamma(\mathbb{D}),$ acting on the weighted Hardy space $\mathcal{H}_\gamma(\mathbb{D})$ is not always convex.
    \begin{example}
        Let $T(f)=\langle f,1-z\rangle (1-z^2),\;f\in \mathcal{H}_\gamma(\mathbb{D}),$ be a finite-rank operator acting on $\mathcal{H}_\gamma(\mathbb{D}).$ For $\lambda\in \mathbb{D},$ we have
        \begin{equation*}
            T(k_\lambda)=\langle k_\lambda,1-z\rangle (1-z^2)=\overline{\langle 1-z,k_\lambda\rangle}(1-z^2)=\left(\overline{1-\lambda}\right)\left(1-z^2\right)=\left(1-\overline{\lambda}\right)\left(1-z^2\right).
        \end{equation*}
        \par Therefore, the Berezin transform of $T$ at $\lambda$ is given by
        \begin{equation*}
            \begin{split}
                \widetilde{T}(\lambda)&=\langle T\widehat{k}_\lambda,\widehat{k}_\lambda\rangle\\
                &=\dfrac{1}{\|k_\lambda\|^2}\langle Tk_\lambda,k_\lambda \rangle \\
                &=\left(1-|\lambda|^2\right)^\gamma \left(Tk_\lambda\right)(\lambda)\\
                &=\left(1-|\lambda|^2\right)^\gamma \left(1-\overline{\lambda}\right)(1-\lambda^2)\\
                &=\left(1-|\lambda|^2\right)^\gamma \left(1-\overline{\lambda}-\lambda^2+|\lambda|^2 \lambda\right).
            \end{split}
        \end{equation*}
        \par On putting $\lambda=x+iy\;\left(x^2+y^2<1\right),$ we have
        \begin{equation*}
            \widetilde{T}(\lambda)=\left(1-x^2-y^2\right)^\gamma \left[\left\{1-x-x^2+x^3+y^2(1+x)\right\}+iy\left\{1-2x+x^2+y^2\right\}\right],
        \end{equation*}
        so that
        \begin{equation*}
            \mathcal{R}\left\{\widetilde{T}(\lambda)\right\}=\left(1-x^2-y^2\right)^\gamma \left\{1-x-x^2+x^3+y^2(1+x)\right\},
        \end{equation*}
        and
        \begin{equation*}
            \mathcal{I}\left\{\widetilde{T}(\lambda)\right\}=\left(1-x^2-y^2\right)^{\gamma}y\left\{1-2x+x^2+y^2\right\}.
        \end{equation*}
        \par We claim that $\textnormal{Ber}(T)$ is not convex in $\mathbb{C}.$ Suppose, on the contrary, $\textnormal{Ber}(T)$ be convex. Then by Corollary \ref{cor:real_part}, $\mathcal{R}\left\{\widetilde{T}(z)\right\}\in \textnormal{Ber}(T)$ for all $z\in \mathbb{D}.$ This infers that for an arbitrary $z\in \mathbb{D},\;\text{there exists} \;\lambda\in \mathbb{D}$ such that $\widetilde{T}(\lambda)=\mathcal{R}\left\{\widetilde{T}(z)\right\};$ which, in turn, gives
        \begin{equation*}
            \mathcal{I}\left\{\widetilde{T}(\lambda)\right\}=\left(1-x^2-y^2\right)^{\gamma}y\left\{1-2x+x^2+y^2\right\}=0.
        \end{equation*}
        Notice that $1-2x+x^2+y^2=0$ implies, $(x-1)^2+y^2=0,$ which further implies that $x=1,\;y=0,$ i.e., the point $(x,y)$ lies on the unit circle $\mathbb{T},$ which is not true. So, $\mathcal{I}\left\{\widetilde{T}(\lambda)\right\}=0$ if and only if $y=0.$ With $y=0,$ we obtain $\mathcal{R}\left\{\widetilde{T}(z)\right\}=\mathcal{R}\left\{\widetilde{T}(\lambda)\right\}=(1-x)\left(1-x^2\right)^{\gamma+1}.$ 
        \par We set $\gamma=0.1$. Using calculus, after some calculations we can see that the maximum value of $\mathcal{R}\left\{\widetilde{T}(z)\right\}$ occurs at $x=-\dfrac{5}{16},$ and its maximum value is $1.17222$ (correct upto six significant figures). Since $z$ is arbitrary, we have for every $z\in \mathbb{D},$
        \begin{equation*}
            \mathcal{R}\left\{\widetilde{T}(z)\right\}<1.18<1.2.
        \end{equation*}
        But for $z=-0.1+0.5i,\;\mathcal{R}\left\{\widetilde{T}(z)\right\}=1.27502$ (correct upto six significant figures). This yields a contradiction. Hence, $\textnormal{Ber}(T)$ is not convex (see Figure \ref{fig:Example3.1,g=0.1}).
    \end{example}
    \begin{remark}
        With the aid of computer, we can see that the non-convexity nature of the Berezin range of the operator $T(f)=\langle f,1-z \rangle \left(1-z^2 \right)$ acting on $\mathcal{H}_\gamma(\mathbb{D})$ gradually disappears with increasing $\gamma,$ which also indicates that convexity of Berezin range of an operator does not solely depend on the operator but also depends on the space on which the operator acts (see Figure \ref{fig:comparison}).
    \end{remark}

    \begin{figure}[htbp!]
        \centering
         \includegraphics[width=0.5\textwidth]{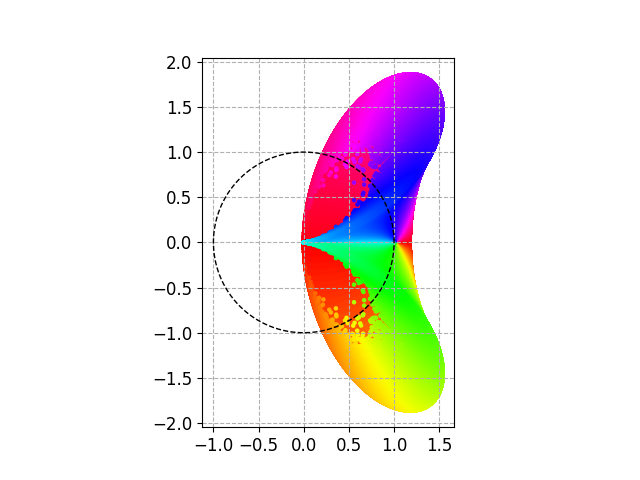}
        \caption{Berezin range of the operator $T(f)=\langle f,1-z \rangle (1-z^2)$ acting on $\mathcal{H}_\gamma(\mathbb{D})$ with $\gamma=0.1$ (apparently not convex).}
        \label{fig:Example3.1,g=0.1}
    \end{figure}

    \begin{figure}[htbp!]
    \centering
        \begin{subfigure}[b]{0.3\textwidth}
         \centering
         \includegraphics[width=\textwidth]{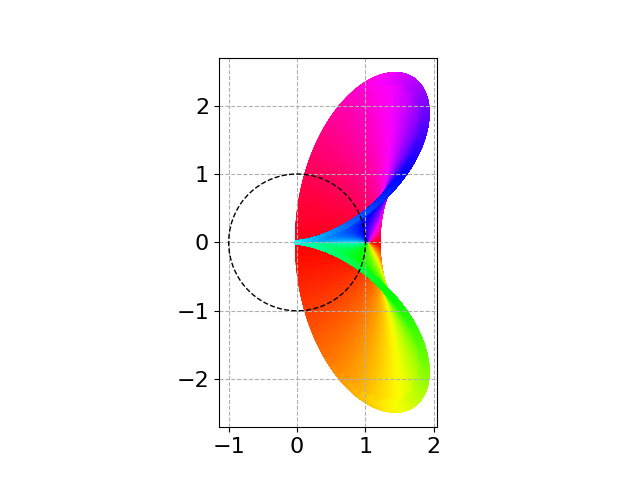}
         \caption{\centering $\gamma=0.01$ (Apparently not convex)}
         \label{fig:ex_3.1_g=0.01_fs=16.png}
        \end{subfigure}
     \hfill
        \begin{subfigure}[b]{0.3\textwidth}
         \centering
         \includegraphics[width=\textwidth]{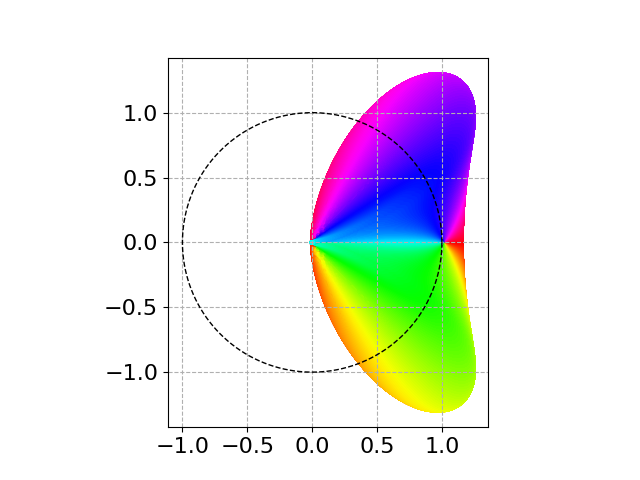}
         \caption{\centering $\gamma=0.3$ (Apparently not convex)}
         \label{fig:ex_3.1_g=0.3_fs=16.png}
        \end{subfigure}
     \hfill
        \begin{subfigure}[b]{0.3\textwidth}
         \centering
         \includegraphics[width=\textwidth]{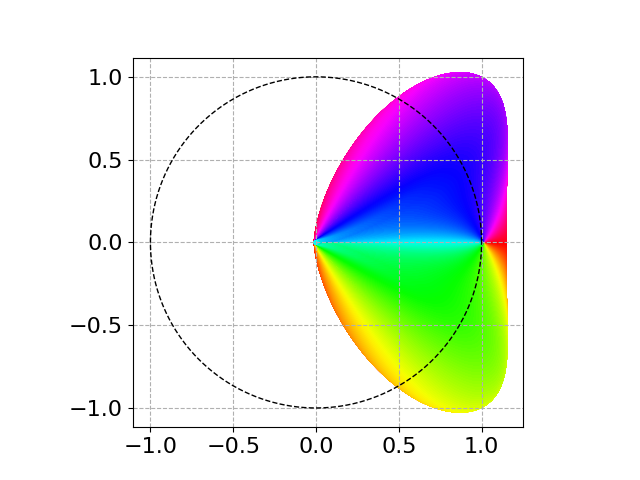}
         \caption{\centering $\gamma=0.5$ (Apparently not convex)}
         \label{fig:ex_3.1_g=0.5_fs=16.png}
        \end{subfigure}
     \hfill
        \begin{subfigure}[b]{0.3\textwidth}
         \centering
         \includegraphics[width=\textwidth]{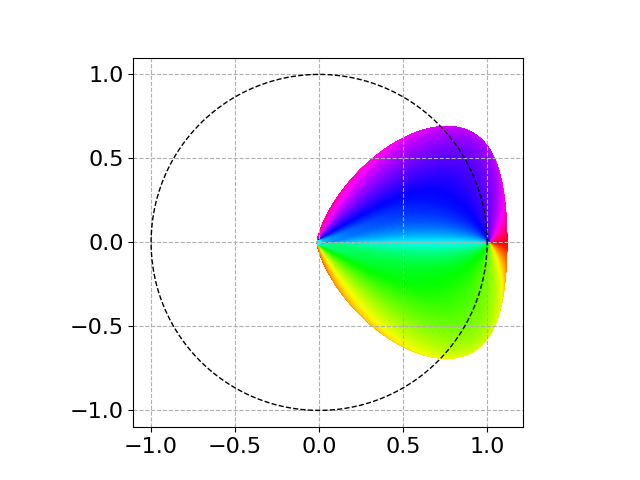}
         \caption{\centering $\gamma=1$ (Apparently convex)}
         \label{fig:ex_3.1_g=1_fs=16.png}
        \end{subfigure}
     \hfill
        \begin{subfigure}[b]{0.3\textwidth}
         \centering
         \includegraphics[width=\textwidth]{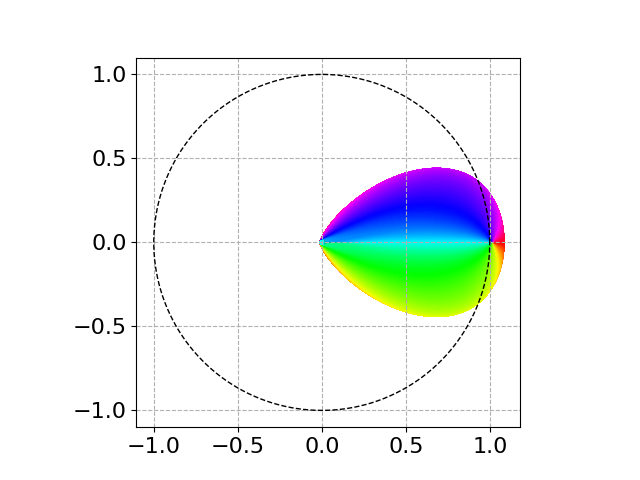}
         \caption{\centering $\gamma=2$ (Apparently convex)}
         \label{fig:ex_3.1_g=2_fs=16.png}
         \end{subfigure}
     \hfill
        \begin{subfigure}[b]{0.3\textwidth}
         \centering
         \includegraphics[width=\textwidth]{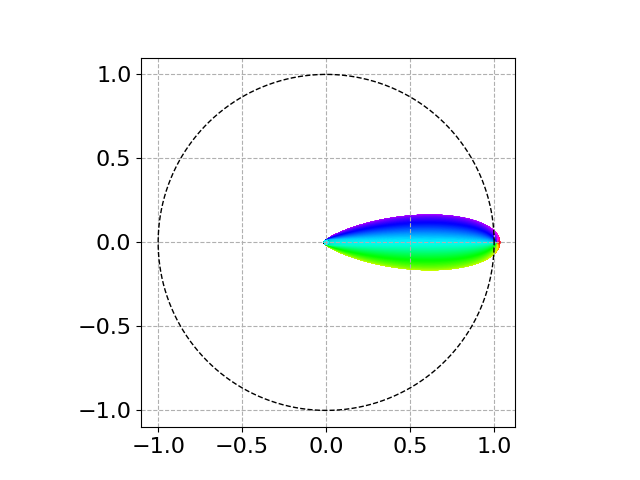}
         \caption{\centering $\gamma=10$ (Apparently convex)}
         \label{fig:ex_3.1_g=10_fs=16.png}
        \end{subfigure}
    \caption{Comparison of Berezin ranges of the operator $T(f)=\langle f,1-z \rangle \left(1-z^2 \right)$ on different spaces.}
    \label{fig:comparison}
    \end{figure}    
    
\begin{example}
		Let $P$ be the projection of  $\mathcal{H}_\gamma(\mathbb{D})$ onto the subspace spanned by $z^k, k\geq 0$. That is, $Pf=(k+1)\langle f, z^k\rangle z^k$. Then $\|P\|=1$ and for every $\lambda\in \mathbb{D},$ we have
        \begin{equation*}
            P(k_\lambda)=(k+1)\langle k_\lambda,z^k\rangle z^k=(k+1)\overline{\langle z^k,k_\lambda\rangle}z^k=(k+1)\overline{\lambda}^kz^k
        \end{equation*}
        so that the Berezin transform of $P$ is given by
        \begin{equation*}
        \begin{split}
            \widetilde{P}(\lambda)&=\langle P\widehat{k}_\lambda,\widehat{k}_\lambda\rangle\\
            &=\dfrac{1}{\|k_\lambda\|^2}\langle Pk_\lambda,k_\lambda\rangle\\
            &=\left(1-|\lambda|^2\right)^\gamma\left(Pk_\lambda\right)(\lambda)\\
            &=(k+1)(1-|\l|^2)^\gamma|\l|^{2k}.
        \end{split}
        \end{equation*}
        Clearly, the Berezin transform of $P$ is a real continuous analytic function and hence the Berezin range of $P,\;\textnormal{Ber}(P)=\left\{\widetilde{P}(\lambda):\lambda\in \mathbb{D}\right \}$ is convex in $\mathbb{C}.$ For a visual verification (see Figure \ref{fig:Example_3.2}).
        \end{example} 

\begin{figure}[htbp!]
     \centering
     \begin{subfigure}[b]{0.3\textwidth}
         \centering
         \includegraphics[width=\textwidth]{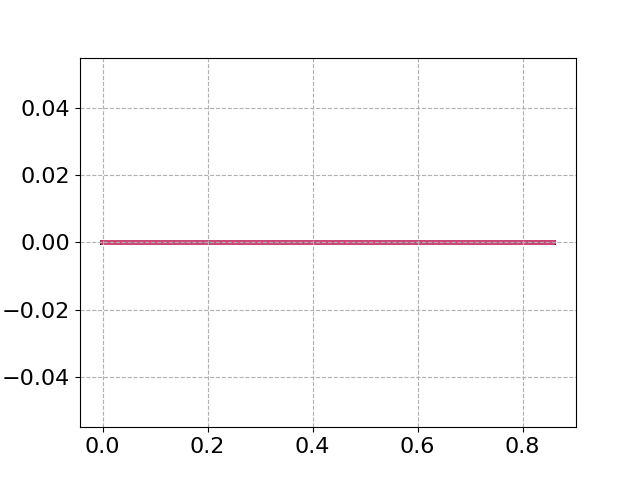}
         \caption{$\gamma=0.5$}
         \label{fig:ex_3.2_g=0.5_fs=16_plasma_nc_nasp.png}
     \end{subfigure}
    \hfill
     \begin{subfigure}[b]{0.3\textwidth}
         \centering
         \includegraphics[width=\textwidth]{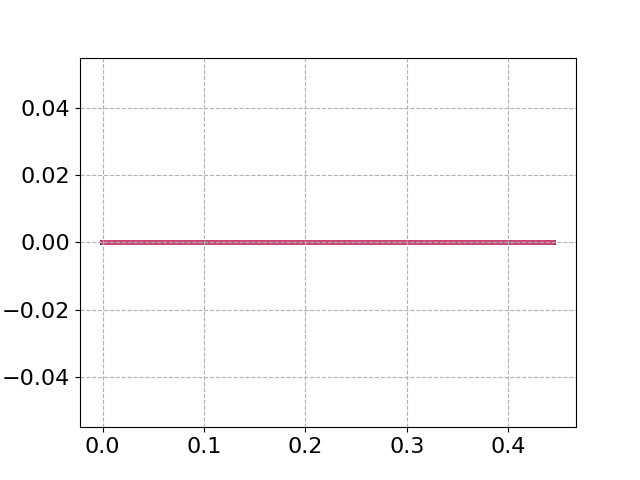}
         \caption{$\gamma=1$ (Hardy space)}
         \label{fig:ex_3.2_g=1_fs=16_plasma_nc_nasp.png}
     \end{subfigure}
    \hfill
     \begin{subfigure}[b]{0.3\textwidth}
         \centering
         \includegraphics[width=\textwidth]{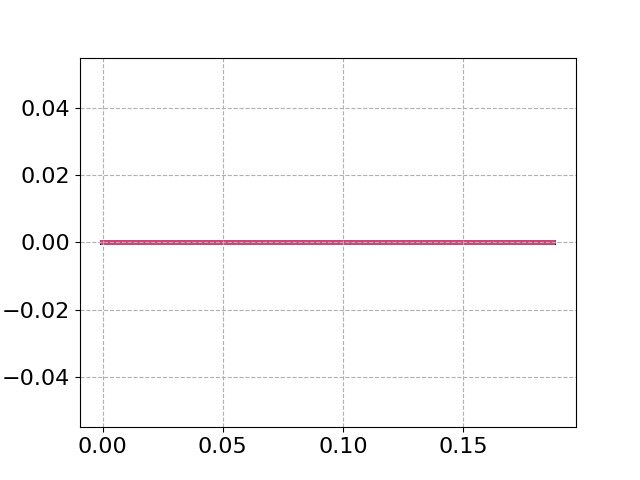}
         \caption{$\gamma=2$ (Bergman space)}
         \label{fig:ex_3.2_g=2_fs=16_plasma_nc_nasp.png}
     \end{subfigure}
    \caption{Comparison of Berezin ranges of the projection operator $P(f)=\langle f,z^2 \rangle 3z^2$ on $\mathcal{H}_\gamma(\mathbb{D})$ for different values of $\gamma.$}
    \label{fig:Example_3.2}
\end{figure}

Just we have seen that the Berezin range of a finie-rank operator of the form $T(f)=\langle f,g\rangle h,$ where $f,g,h\in \mathcal{H}_\gamma(\mathbb{D})$ is not always convex. However, if $f$ and $g$ both are some positive integral power of $z$, then its Berezin range (which is either a disc or an interval, depending on the powers) is always convex; we will prove this in the following theorem, which can also be regarded as a generalization of Theorem \ref{th:same integral power}. 
\begin{theorem}\label{th:different integral power}
    Let $T(f)=\langle f,z^m\rangle z^n;\;m,n\in \mathbb{N},$ be a rank-one operator acting on $\mathcal{H}_\gamma(\mathbb{D}).$ Then the Berezin range of $T$ is given by
    \begin{equation*}
        \textnormal{Ber}(T)=\begin{cases}
            \mathbb{D}_{\left(\frac{2\gamma}{m+n+2\gamma}\right)^\gamma \left(\frac{m+n}{m+n+2\gamma}\right)^{\frac{m+n}{2}}},&\text{when}\;m\neq n \\
            \left[0,\;\dfrac{\gamma^\gamma n^n}{(n+\gamma)^{n+\gamma}}\right],&\text{when}\;m=n
        \end{cases};
    \end{equation*}
    in either case, which is convex in the complex plane $\mathbb{C}.$
\end{theorem}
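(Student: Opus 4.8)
The plan is to compute the Berezin transform $\widetilde{T}$ explicitly and then read its range off from its rotational structure. First I would evaluate $T$ on the reproducing kernel: since $\langle k_\lambda, z^m\rangle = \overline{\langle z^m, k_\lambda\rangle} = \overline{\lambda}^{\,m}$, we obtain $T(k_\lambda) = \overline{\lambda}^{\,m} z^n$, and therefore, exactly as in the proofs of Theorems \ref{th:(f,z)z} and \ref{th:same integral power},
$$\widetilde{T}(\lambda) = \big(1-|\lambda|^2\big)^\gamma (T k_\lambda)(\lambda) = \big(1-|\lambda|^2\big)^\gamma\, \overline{\lambda}^{\,m}\lambda^n .$$
Writing $\lambda = re^{i\theta}$ with $r\in[0,1)$ and $\theta\in[0,2\pi)$, this becomes $\widetilde{T}(\lambda) = (1-r^2)^\gamma r^{\,m+n} e^{i(n-m)\theta}$.

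When $m=n$ we have $\overline{\lambda}^{\,m}\lambda^n = |\lambda|^{2n}$, so $\widetilde{T}(\lambda) = (1-|\lambda|^2)^\gamma|\lambda|^{2n}$, which is precisely the Berezin transform treated in Theorem \ref{th:same integral power}; that result immediately gives $\textnormal{Ber}(T) = \big[0,\ \gamma^\gamma n^n/(n+\gamma)^{n+\gamma}\big]$, an interval and hence convex.

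When $m\neq n$ the integer $n-m$ is nonzero, so as $\theta$ runs over $[0,2\pi)$ the factor $e^{i(n-m)\theta}$ traverses the whole unit circle (wrapping around $|n-m|$ times). Hence, for each fixed $r$, the set $\{\widetilde{T}(re^{i\theta}):\theta\in[0,2\pi)\}$ is the origin-centred circle of radius $\rho(r):=(1-r^2)^\gamma r^{\,m+n}$, and consequently $\textnormal{Ber}(T) = \{z\in\mathbb{C}: |z|=\rho(r)\text{ for some }r\in[0,1)\}$. It then remains to identify the set of attained radii. Substituting $t=r^2$ and maximising $g(t)=(1-t)^\gamma t^{(m+n)/2}$ on $[0,1)$ — via the logarithmic derivative $-\gamma/(1-t)+(m+n)/(2t)=0$ — yields the unique critical point $t_0=(m+n)/(m+n+2\gamma)\in(0,1)$, where $g$ attains its maximum
$$R=\Big(\tfrac{2\gamma}{m+n+2\gamma}\Big)^{\gamma}\Big(\tfrac{m+n}{m+n+2\gamma}\Big)^{(m+n)/2}.$$
Since $\rho$ is continuous with $\rho(0)=0$, $\rho(r)\to 0$ as $r\to1^-$, and $\rho>0$ on $(0,1)$, the intermediate value theorem shows $\rho$ attains every value in $[0,R]$. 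Therefore $\textnormal{Ber}(T)$ is the closed disc of radius $R$ about the origin, i.e.\ $\mathbb{D}_R$, which is convex; together with the $m=n$ case this proves the theorem.

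The computation is essentially routine, and I expect the only point needing genuine care to be the phase argument for $m\neq n$: one must verify that $e^{i(n-m)\theta}$ really sweeps out the entire unit circle, since this is exactly what upgrades the one-dimensional interval picture of the $m=n$ case to a full two-dimensional disc. A secondary bookkeeping point is checking that the critical-point computation reproduces the precise exponents in the statement and noting that the maximal radius $R$ is genuinely attained (at an interior $r_0\in(0,1)$), so that the disc $\mathbb{D}_R$ is closed.
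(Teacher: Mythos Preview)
Your proof is correct and follows essentially the same approach as the paper: compute $\widetilde T(\lambda)=(1-r^2)^\gamma r^{m+n}e^{i(n-m)\theta}$ in polar coordinates, observe that for $m\neq n$ the angular factor sweeps out full circles so the range is a union of concentric circles, and then optimise the radial factor to find the maximal radius. The paper splits $m\neq n$ into the two subcases $m<n$ and $m>n$ and differentiates directly in $r$, whereas you handle both at once and optimise via the substitution $t=r^2$ and the logarithmic derivative, but these are cosmetic differences; the argument and the computed radius agree.
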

 \setcounter{case}{0}
\begin{proof}
    For $\lambda\in \mathbb{D},$ we have
    \begin{equation*}
        T\left(k_\lambda\right)=\langle k_\lambda,z^m\rangle z^n=\overline{\langle z^m,k_\lambda\rangle}z^n=\overline{\lambda}^mz^n.
    \end{equation*}
    \begin{case}\label{1st_case}
        m<n.
    \end{case}
   In this case, the Berezin transform of $T$ at $\lambda$ is given by
   \begin{equation*}
       \begin{split}
           \widetilde{T}(\lambda)&=\langle T\widehat{k}_\lambda,\widehat{k}_\lambda\rangle\\
           &=\dfrac{1}{\|k_\lambda\|^2}\langle Tk_\lambda,k_\lambda\rangle \\
           &=\left(1-|\lambda|^2\right)^\gamma \left(Tk_\lambda\right)(\lambda)\\
           &=\left(1-|\lambda|^2\right)^\gamma|\lambda|^{2m}\lambda^{n-m}.
       \end{split}
   \end{equation*}
   \par Put $\lambda=re^{i\theta}\;(0\leq r<1)$ to have
   \begin{equation*}
       \widetilde{T}(\lambda)=\left(1-r^2\right)^\gamma r^{m+n}e^{i\theta (n-m)}
   \end{equation*}
   \par For each fixed $r\in [0,1),$ the set $\left(1-r^2\right)^\gamma r^{m+n}e^{i\theta (n-m)},\theta \in \mathbb{R},$ describes a 
 circle in the complex plane $\mathbb{C}$ whose center is at the origin and radius is less than $1$ [notice that $\left(1-r^2\right)^\gamma r^{m+n}<1$ as $r<1$]. Conversely, let $\rho e^{it},\;0\leq \rho <1,\;t\in [0,2\pi),$ be any circle having center at the origin and of radius $<1.$ Since $0\leq \rho<1,$ it is always possible to find $r\in [0,1)$ to satisfy $\rho=\left(1-r^2\right)^\gamma r^{m+n}$ so that
 \begin{equation*}
     \rho e^{it}=\left(1-r^2\right)^\gamma r^{m+n}e^{it}=\left(1-r^2\right)^\gamma r^{m+n}e^{i\left(\frac{t}{n-m}\right)(n-m)}.
 \end{equation*}
 Thus, every circle having center at the origin and of radius $<1$ also corresponds to a Berezin transform $\widetilde{T}(\lambda)$ for some $\lambda\in \mathbb{D}.$ This means that the members in the Berezin range of $T$ constitute a system of concentric circles having center at the origin. Also, since $0\in \textnormal{Ber}(T)$ [notice that $\widetilde{T}(0)=0$], $\textnormal{Ber}(T)$ is a circular disc centered on the origin, which, clearly is a convex set in $\mathbb{C}.$ Now, to find the radius of this disc, we determine the extreme points of $\left(1-r^2\right)^\gamma r^{m+n}=F(r)\;\mbox{(say)}$; and to do this we differentiate it with respect to $r$ and equate to zero to have
 \begin{equation*}
     \left(1-r^2\right)^{\gamma-1}r^{m+n-1}\left\{m+n-(m+n+2\gamma)r^2\right\}=0.
 \end{equation*}
 Since $1-r^2\neq 0,$ above holds if and only if $r=0$ or $r=\sqrt{\dfrac{m+n}{m+n+2\gamma}}.$ We observe that when $r=0,\;F(r)=0;$ and when $r=\sqrt{\dfrac{m+n}{m+n+2\gamma}},\;F(r)=\left(\dfrac{2\gamma}{m+n+2\gamma}\right)^\gamma \left(\dfrac{m+n}{m+n+2\gamma}\right)^{\dfrac{m+n}{2}}.$ So, the radius of the aforesaid disc is $\left(\dfrac{2\gamma}{m+n+2\gamma}\right)^\gamma \left(\dfrac{m+n}{m+n+2\gamma}\right)^{\dfrac{m+n}{2}}.$
 \begin{case}\label{2nd_case}
     m>n.
 \end{case}
  In this case, the Berezin transform of $T$ at $\lambda$ is given by
   \begin{equation*}
       \begin{split}
           \widetilde{T}(\lambda)&=\langle T\widehat{k}_\lambda,\widehat{k}_\lambda\rangle\\
           &=\dfrac{1}{\|k_\lambda\|^2}\langle Tk_\lambda,k_\lambda\rangle \\
           &=\left(1-|\lambda|^2\right)^\gamma \left(Tk_\lambda\right)(\lambda)\\
           &=\left(1-|\lambda|^2\right)^\gamma|\lambda|^{2n}\lambda^{m-n}.
       \end{split}
   \end{equation*}
   \par Proceeding as in \textbf{Case \ref{1st_case}}, we find that the Berezin range of $T,\;\textnormal{Ber}(T)$, in this case also, is the disc centered on the origin and of radius $\left(\dfrac{2\gamma}{m+n+2\gamma}\right)^\gamma \left(\dfrac{m+n}{m+n+2\gamma}\right)^{\dfrac{m+n}{2}}.$
   \begin{case}\label{3rd_case}
       m=n.
   \end{case}
   In this case, the Berezin transform of $T$ at $\lambda$ is given by
   \begin{equation*}
       \widetilde{T}(\lambda)
       =\langle T\widehat{k}_\lambda,\widehat{k}_\lambda\rangle
       =\left(1-|\lambda|^2\right)^\gamma \left(Tk_\lambda\right)(\lambda)
       =\left(1-|\lambda|^2\right)^\gamma|\lambda|^{2n}.
    \end{equation*}
    where $|\lambda|\in[0,1).$ As shown earlier [c.f. proof of Theorem \ref{th:same integral power}], the Berezin range of $T,$ in this case, is the interval $\left[0,\;\dfrac{\gamma^\gamma n^n}{\left(n+\gamma\right)^{n+\gamma}}\right]$ on the real axis, which is a convex set in $\mathbb{C}.$
\end{proof}
\begin{remark}
    Since the result derived in Theorem \ref{th:different integral power} is symmetric with respect to $m$ and $n,$ we see that the operators $T_1f=\langle f,z^m\rangle z^n$ and $T_2f=\langle f,z^n\rangle z^m$ acting on the weighted Hardy space $\mathcal{H}_\gamma(\mathbb{D})$ have the same Berezin range.
\end{remark}
\begin{remark}
    Putting $\gamma=1$ in the result of Theorem \ref{th:different integral power} we see that the Berezin range of the rank-one operator $Tf=\langle f,z^m\rangle z^n$ acting on the classical Hardy space $H^2(\mathbb{D})$ is the disc $\D_{\left(\frac{2}{m+n+2}\right) \left(\frac{m+n}{m+n+2}\right)^{\frac{m+n}{2}}}$ or the interval $\left[0,\;\dfrac{n^n}{\left(n+1\right)^{n+1}}\right]$ according as $m\neq n$ or $m=n$, which have been proved in \cite{Athul2_Article_2024}. \par
    Also, putting $\g=2$ in the above result we see that the Berezin range of the rank one operator $Tf=\langle f,z^m\rangle z^n$ acting on the Bergman space $A^2(\mathbb{D})$ is the disc $\D_{\left(\frac{4}{m+n+4}\right)^2 \left(\frac{m+n}{m+n+4}\right)^{\frac{m+n}{2}}}$ or the interval $\left[0,\;\dfrac{4 n^n}{\left(n+2\right)^{n+2}}\right]$ according as $m\neq n$ or $m=n$, which have been proved in 
 \cite{Athul2_Article_2024}. 
\end{remark}
\begin{corollary}
    The Berezin range of the rank-one operator $Tf=\langle f, z^m\rangle z^n,\;m,n\in \mathbb{N}$ acting on the space $\mathcal{H}_\gamma(\mathbb{D})$ is symmetric about the real axis.
\end{corollary}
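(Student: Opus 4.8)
The plan is to read the conclusion straight off Theorem~\ref{th:different integral power}, which already furnishes an explicit description of $\textnormal{Ber}(T)$ in all cases. I would first recall the elementary observation that a subset $S\subseteq\mathbb{C}$ is symmetric with respect to the real axis precisely when $\overline{S}:=\left\{\overline{z}:z\in S\right\}=S$; thus it suffices to verify that $\textnormal{Ber}(T)$ is invariant under complex conjugation.

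Next I would dispose of the case $m=n$. Here Theorem~\ref{th:different integral power} gives $\textnormal{Ber}(T)=\left[0,\dfrac{\gamma^\gamma n^n}{(n+\gamma)^{n+\gamma}}\right]$, which is a subinterval of $\mathbb{R}$, and every subset of the real axis is fixed by complex conjugation. For the remaining case $m\neq n$, the same theorem identifies $\textnormal{Ber}(T)$ with the closed disc $\mathbb{D}_R$ centered at the origin of radius $R=\left(\dfrac{2\gamma}{m+n+2\gamma}\right)^{\gamma}\left(\dfrac{m+n}{m+n+2\gamma}\right)^{\frac{m+n}{2}}$; since $|\overline{z}|=|z|$, one has $\overline{\mathbb{D}_R}=\mathbb{D}_R$, so this disc is conjugation-invariant as well. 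Combining the two cases yields $\overline{\textnormal{Ber}(T)}=\textnormal{Ber}(T)$, i.e. symmetry about the real axis.

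As a remark I would also point to the alternative, purely structural route: since $g(z)=z^m$ and $h(z)=z^n$ have power-series expansions about the origin with real (indeed integer) coefficients, Theorem~\ref{th:more generalized finite sum} already asserts that $\textnormal{Ber}(T)$ is closed under complex conjugation, which gives the corollary with no reference to the geometric shape of the range at all.

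There is essentially no obstacle here; the only point requiring (minimal) care is to check that the three subcases of Theorem~\ref{th:different integral power} exhaust all possibilities for $m,n\in\mathbb{N}$ and to note explicitly that a disc centered at the origin is visibly invariant under $z\mapsto\overline{z}$. The corollary is, in effect, just a matter of unpacking what the preceding theorem has already established.
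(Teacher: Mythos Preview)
Your proposal is correct and follows exactly the paper's approach: the paper's proof is the single sentence ``This follows directly from Theorem~\ref{th:different integral power},'' and you have simply unpacked that sentence by noting that both a real interval and a disc centered at the origin are invariant under complex conjugation. Your additional remark pointing to Theorem~\ref{th:more generalized finite sum} as an alternative route is a nice observation not made explicit in the paper.
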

\begin{proof}
    This follows directly from Theorem \ref{th:different integral power}.
\end{proof}
\begin{corollary}
    Let $Tf=\langle f, z^m\rangle z^n,\;m,n\in \mathbb{N}$ be a rank-one operator acting on the space $\mathcal{H}_\gamma(\mathbb{D})$. then for every $\lambda\in \mathbb{D},\;\mathcal{R}\left\{\widetilde{T}(\lambda)\right\}\in \textnormal{Ber}(T).$
\end{corollary}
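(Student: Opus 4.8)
The plan is to derive this as an immediate consequence of two facts already in hand: the explicit description of $\textnormal{Ber}(T)$ obtained in Theorem \ref{th:different integral power} (in particular, its convexity) together with the conjugation symmetry of the Berezin transform for operators built from functions with real Taylor coefficients, as recorded in Theorem \ref{th:more generalized finite sum} and Corollary \ref{cor:real_part}. First I would observe that $Tf=\langle f,z^m\rangle z^n$ is the one-term instance of the operator $\sum_{i=1}^{n}\langle f,g_i\rangle h_i$ treated in Corollary \ref{cor:real_part}, with $g_1=z^m$ and $h_1=z^n$; these functions plainly have power series expansions about the origin with real coefficients, so the hypothesis of that corollary is satisfied.

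Next, from Theorem \ref{th:different integral power}, $\textnormal{Ber}(T)$ equals the disc $\mathbb{D}_r$ with $r=\left(\tfrac{2\gamma}{m+n+2\gamma}\right)^\gamma\left(\tfrac{m+n}{m+n+2\gamma}\right)^{(m+n)/2}$ when $m\neq n$, and the interval $\left[0,\tfrac{\gamma^\gamma n^n}{(n+\gamma)^{n+\gamma}}\right]$ when $m=n$; in either case it is convex in $\mathbb{C}$. Feeding this convexity into Corollary \ref{cor:real_part} yields $\mathcal{R}\{\widetilde{T}(\lambda)\}\in\textnormal{Ber}(T)$ for every $\lambda\in\mathbb{D}$, which is precisely the assertion. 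Equivalently — and this is the short self-contained route I would actually write out — using $\widetilde{T}(\lambda)=(1-|\lambda|^2)^\gamma\,\overline{\lambda}^m\lambda^n$ from the computation in the proof of Theorem \ref{th:different integral power}, one gets $\overline{\widetilde{T}(\lambda)}=(1-|\lambda|^2)^\gamma\,\lambda^m\overline{\lambda}^n=\widetilde{T}(\overline{\lambda})$; since $\overline{\lambda}\in\mathbb{D}$, both $\widetilde{T}(\lambda)$ and $\overline{\widetilde{T}(\lambda)}$ lie in $\textnormal{Ber}(T)$, and the convex combination $\tfrac12\widetilde{T}(\lambda)+\tfrac12\overline{\widetilde{T}(\lambda)}=\mathcal{R}\{\widetilde{T}(\lambda)\}$ therefore lies in $\textnormal{Ber}(T)$ as well.

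There is no genuine obstacle in this corollary: the substantive work — the concrete evaluation of the Berezin transform and the proof that its range is a disc centered at the origin or an interval on the real axis — has already been carried out in Theorem \ref{th:different integral power}, and the only additional input is the elementary identity $\overline{\widetilde{T}(\lambda)}=\widetilde{T}(\overline{\lambda})$, valid because $z^m$ and $z^n$ have real coefficients. The one point that deserves an explicit sentence is that complex conjugation is a bijection of $\mathbb{D}$, so that $\textnormal{Ber}(T)$ is closed under conjugation; with that in place, convexity supplies the rest.
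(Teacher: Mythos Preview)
Your proposal is correct and follows essentially the same argument as the paper: the paper notes that $\textnormal{Ber}(T)$ is symmetric about the real axis (so $\overline{\widetilde{T}(\lambda)}\in\textnormal{Ber}(T)$) and convex (from Theorem~\ref{th:different integral power}), and then takes the midpoint $\tfrac12\widetilde{T}(\lambda)+\tfrac12\overline{\widetilde{T}(\lambda)}=\mathcal{R}\{\widetilde{T}(\lambda)\}$. Your invocation of Corollary~\ref{cor:real_part} and the explicit identity $\overline{\widetilde{T}(\lambda)}=\widetilde{T}(\overline{\lambda})$ are just minor elaborations of the same idea.
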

\begin{proof}
    Let $\lambda\in \mathbb{D}$ be arbitrary. Since $\textnormal{Ber}(T)$ is symmetric about the real axis, $\overline{\widetilde{T}(\lambda)}\in \textnormal{Ber}(T).$ Then, since $\textnormal{Ber}(T)$ is convex, $\dfrac{1}{2}\widetilde{T}(\lambda)+\dfrac{1}{2}\overline{\widetilde{T}(\lambda)}=\mathcal{R}\left\{\widetilde{T}(\lambda)\right\}\in \textnormal{Ber}(T).$
\end{proof}
\begin{example}
    Let $T(f)=\langle f,z^2\rangle z^3$ be a rank-one operator acting on the space $\mathcal{H}_{\frac{1}{2}}(\mathbb{D})$ (putting $\gamma=\dfrac{1}{2}$). Then, $\textnormal{Ber}(T)=\D_{\frac{25\sqrt{5}}{216}}$, which is convex in $\mathbb{C}$ (see Figure \ref{fig: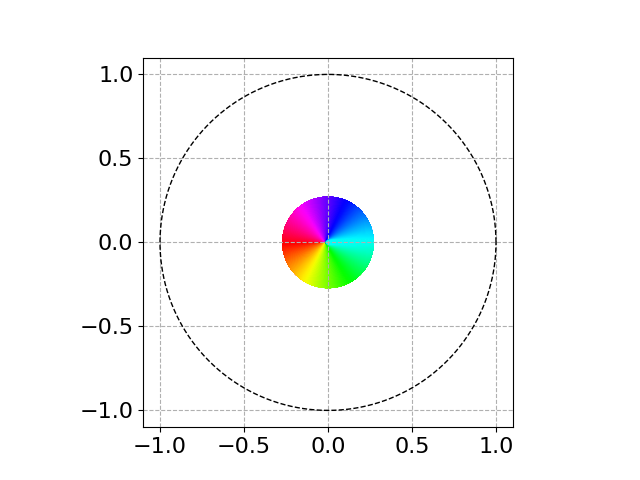}).
\end{example}
\begin{figure}[ht!]
    \centering
    \includegraphics[width=0.5\textwidth]{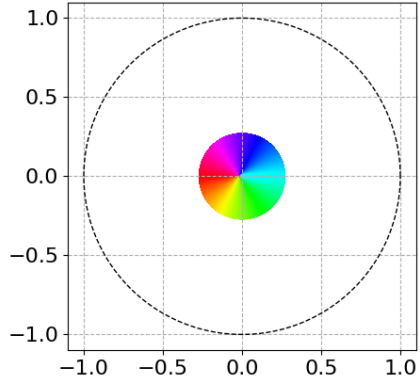}
    \caption{Berezin range of the operator $T(f)=\langle f,z^2 \rangle z^3$ on $\mathcal{H}_{1/2}(\mathbb{D}).$}
    \label{fig:th_7_g=0.5_m=2,n=3_fs=16_wc.png}
\end{figure}
\begin{remark}
    Let $T(f)=\langle f, z^m \rangle z^n$ be a rank-one operator acting on the space $\mathcal{H}_\gamma(\mathbb{D}).$ Keeping $m,n$ fixed and letting $\gamma \rightarrow \infty$, we see that $\left(\dfrac{2\gamma}{m+n+2\gamma}\right)^\gamma \left(\dfrac{m+n}{m+n+2\gamma}\right)^{\dfrac{m+n}{2}} \rightarrow 0$ and $\dfrac{\gamma^\gamma n^n}{(n+\gamma)^{n+\gamma}}\rightarrow 0.$ This indicates that the Berezin range of $T,$ which is either a disc (when $m\neq n$ or an interval on the real axis (when $m=n$), gradually squeezes with increasing $\gamma;$ and finally, when $\gamma \rightarrow \infty,\;\textnormal{Ber}(T)$ tends to coincide with the origin. This can also be verified with the aid of computer (see Figure \ref{fig:comparison3}).
\end{remark}

\begin{figure}[htbp!]
    \centering
     \begin{subfigure}[b]{0.3\textwidth}
         \centering
         \includegraphics[width=\textwidth]{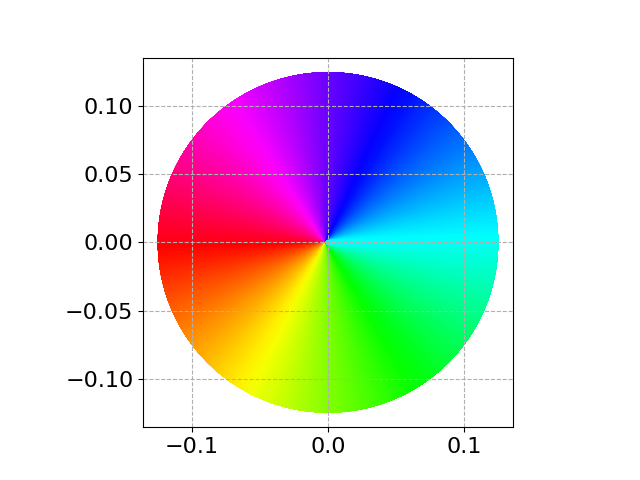}
         \caption{$\gamma=1$ (Hardy space)}
         \label{fig:th_7_g=1_m=2,n=3_fs=16_nc.png}
     \end{subfigure}
    \hfill
     \begin{subfigure}[b]{0.3\textwidth}
         \centering
         \includegraphics[width=\textwidth]{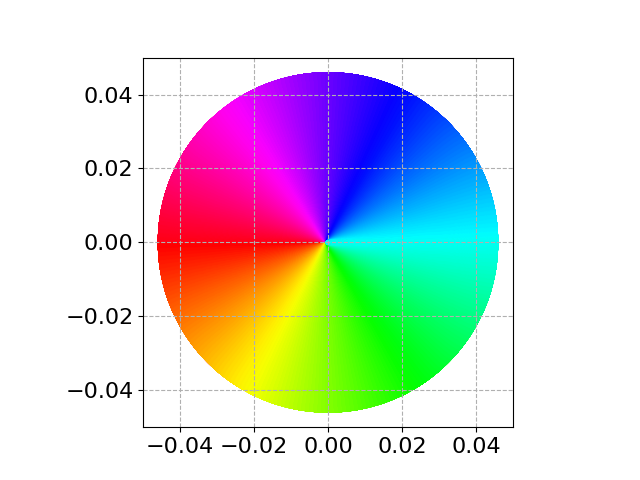}
         \caption{$\gamma=2$ (Bergman space)}
         \label{fig:th_7_g=2_m=2,n=3_fs=16_nc.png}
     \end{subfigure}
    \hfill
     \begin{subfigure}[b]{0.3\textwidth}
         \centering
         \includegraphics[width=\textwidth]{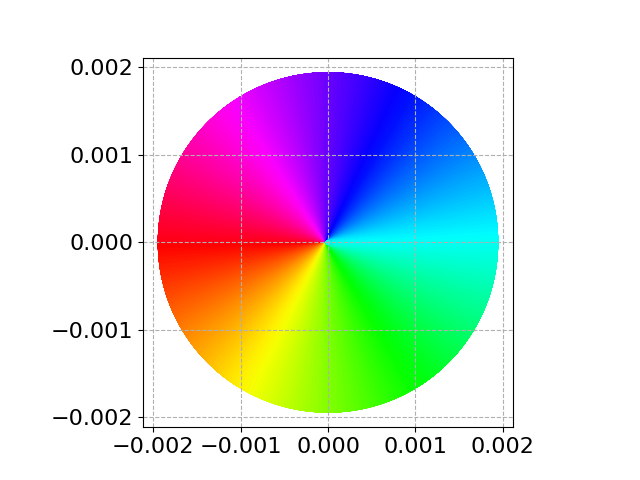}
         \caption{$\gamma=10$}
         \label{fig:th_7_g=10_m=2,n=3_fs=16_nc.png}
     \end{subfigure}
    \caption{Comparison of Berezin ranges of the operator $T(f)=\langle f,z^2 \rangle z^3$ acting on the weighted Hardy space $\mathcal{H}_\gamma(\mathbb{D})$ with different weights.}
     \label{fig:comparison3}
\end{figure}

\medskip
\subsection{Berezin range of multiplication operator}
For the space $\mathcal{H}_\gamma (\mathbb{D}),$ let
\begin{equation*}
    \text{Mult}\left(\mathcal{H}_\gamma(\mathbb{D})\right)=\left\{\varphi \in \mathcal{H}_\gamma (\mathbb{D}):\varphi f \in \mathcal{H}_\gamma (\mathbb{D})\;\text{for all}\;f \in \mathcal{H}_\gamma(\mathbb{D})\right\}.
\end{equation*}
For $\varphi \in \text{Mult}\left(\mathcal{H}_\gamma(\mathbb{D})\right),$ define the multiplication operator $M_\varphi$ acting on $\mathcal{H}_\gamma(\mathbb{D})$ by
\begin{equation}
    (M_\varphi f)(z)=\phi(z)f(z),\;f\in \mathcal{H}_\gamma(\mathbb{D}).
\end{equation}
In particular, if $\varphi(z)=a_0+a_1z+a_2z^2+\cdots+a_nz^n,\;a_i\in\mathbb{C}\;\text{with}\;a_n \neq 0,\;i=1(n)$ be a complex polynomial, then the operator $M_\varphi$ is of finite rank. In the following, we investigate the convexity of the Berezin range of the operator $M_\varphi$.
\begin{theorem}\label{th:berezin range_mult}
    Let $\varphi \in \textnormal{Mult}\left(\mathcal{H}_\gamma(\mathbb{D}).\right)$ Then the Berezin range of the multiplication operator $M_\varphi$ acting on the weighted Hardy space $\mathcal{H}_\gamma(\mathbb{D})$ is $\varphi(\mathbb{D}),$ the image of the unit disc $\mathbb{D}$ under the mapping $\varphi.$
\end{theorem}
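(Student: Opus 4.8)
The plan is to compute the Berezin transform of $M_\varphi$ directly and show that it coincides with $\varphi$ at every point of $\mathbb{D}$; once this is done, the description of the Berezin range is immediate from the definition.

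First I would fix $\lambda\in\mathbb{D}$ and record the action of a multiplication operator on a reproducing kernel. For any $f\in\mathcal{H}_\gamma(\mathbb{D})$,
\[
\langle M_\varphi f,k_\lambda\rangle=(M_\varphi f)(\lambda)=\varphi(\lambda)f(\lambda)=\varphi(\lambda)\langle f,k_\lambda\rangle=\langle f,\overline{\varphi(\lambda)}\,k_\lambda\rangle,
\]
so $M_\varphi^{*}k_\lambda=\overline{\varphi(\lambda)}\,k_\lambda$; equivalently, in the computational form used in the preceding theorems, $(M_\varphi k_\lambda)(z)=\varphi(z)k_\lambda(z)$, hence $(M_\varphi k_\lambda)(\lambda)=\varphi(\lambda)k_\lambda(\lambda)=\varphi(\lambda)(1-|\lambda|^{2})^{-\gamma}$, using $\|k_\lambda\|^{2}=k_\lambda(\lambda)=(1-|\lambda|^{2})^{-\gamma}$ from the excerpt. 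Then
\[
\widetilde{M_\varphi}(\lambda)=\langle M_\varphi\widehat{k}_\lambda,\widehat{k}_\lambda\rangle=\frac{1}{\|k_\lambda\|^{2}}\langle M_\varphi k_\lambda,k_\lambda\rangle=(1-|\lambda|^{2})^{\gamma}(M_\varphi k_\lambda)(\lambda)=\varphi(\lambda).
\]
Therefore $\textnormal{Ber}(M_\varphi)=\{\widetilde{M_\varphi}(\lambda):\lambda\in\mathbb{D}\}=\{\varphi(\lambda):\lambda\in\mathbb{D}\}=\varphi(\mathbb{D})$, which is exactly the assertion.

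There is no genuine obstacle here: the single point requiring care is the identity $M_\varphi^{*}k_\lambda=\overline{\varphi(\lambda)}\,k_\lambda$ (equivalently the evaluation $(M_\varphi k_\lambda)(\lambda)=\varphi(\lambda)k_\lambda(\lambda)$), which is precisely the reproducing property combined with the definition of $M_\varphi$, together with the implicit remark that for $\varphi\in\textnormal{Mult}(\mathcal{H}_\gamma(\mathbb{D}))$ the operator $M_\varphi$ is bounded (so that its Berezin transform is defined), a standard consequence of the closed graph theorem. It is worth noting that this result is the $\mathcal{H}_\gamma(\mathbb{D})$ instance of Lemma 1.1, with the underlying set being $\mathbb{D}$ itself and the kernel $k_\lambda(z)=(1-\overline{\lambda}z)^{-\gamma}$; in particular the convexity of $\textnormal{Ber}(M_\varphi)$ is now equivalent to the convexity of $\varphi(\mathbb{D})$, which can then be used to produce concrete examples (e.g. taking $\varphi$ a polynomial whose image over $\mathbb{D}$ fails to be convex).
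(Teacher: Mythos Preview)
Your proof is correct and follows essentially the same route as the paper: both compute $(M_\varphi k_\lambda)(\lambda)=\varphi(\lambda)k_\lambda(\lambda)$ directly from the definition of $M_\varphi$ and the reproducing property, then divide by $\|k_\lambda\|^{2}$ to obtain $\widetilde{M_\varphi}(\lambda)=\varphi(\lambda)$. Your additional remarks on $M_\varphi^{*}k_\lambda=\overline{\varphi(\lambda)}k_\lambda$, boundedness via the closed graph theorem, and the link to Lemma~1.1 are accurate supplementary observations but not needed for the argument.
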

\begin{proof}
    For $\lambda \in \mathbb{D},$ we have
    \begin{equation*}
        M_\varphi(k_\lambda)=\varphi(z)k_\lambda(z).
    \end{equation*}
    Then the Berezin transform of $M_\varphi$ at $\lambda\in \mathbb{D})$ is given by
    \begin{align*}
        \widetilde{M_\varphi}(\lambda)=\langle M_\varphi \widehat{k}_\lambda,\widehat{k}_\lambda \rangle 
        =\dfrac{1}{\|k_\lambda\|^2}\langle M_\varphi k_\lambda,k_\lambda \rangle 
        =\dfrac{1}{\|k_\lambda\|^2} \left(M_\varphi k_\lambda\right)(\lambda)
        =\dfrac{1}{\|k_\lambda\|^2} \varphi(\lambda)k_\lambda(\lambda)\\
        =\dfrac{1}{\|k_\lambda\|^2} \varphi(\lambda) \langle 
        k_\lambda,k_\lambda \rangle 
        =\dfrac{1}{\|k_\lambda\|^2} \varphi(\lambda)\|k_\lambda\|^2
        =\varphi(\lambda).
    \end{align*}
    Hence, the Berezin range of $M_\varphi$ is given by $\textnormal{Ber}\left(M_\varphi\right)=\left\{\widetilde{M_\varphi}(\lambda):\lambda \in \mathbb{D}\right\}=\left\{\varphi(\lambda):\lambda \in \mathbb{D}\right\}=\varphi(\mathbb{D}).$ 
\end{proof}
\begin{remark}
        The absence of any term involving $\gamma$ in the result of Theorem \ref{th:berezin range_mult} indicates that the result is independent of the space $\mathcal{H}_\gamma (\mathbb{D}).$ Indeed, Cowen et al. in \cite[Proposition 3.2]{CowenFelder} have derived the same result in any RKHS.
    \end{remark}
\begin{corollary}\label{cor:mult_z^n}
    The Berezin range of the multiplication operator $M_{z^n},\;n\in \mathbb{N},$ acting on $\mathcal{H}_\gamma (\mathbb{D})$ is the unit disc $\mathbb{D},$ which is convex in $\mathbb{C}.$
\end{corollary}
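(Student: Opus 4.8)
The plan is to deduce this immediately from Theorem~\ref{th:berezin range_mult} applied to $\varphi(z)=z^n$. First I would note that $z^n\in\textnormal{Mult}(\mathcal{H}_\gamma(\mathbb{D}))$, so that the theorem is applicable: the coordinate function $z$ is a multiplier of $\mathcal{H}_\gamma(\mathbb{D})$, hence $M_{z^n}=(M_z)^n$ is bounded, and $z^n$ maps $\mathcal{H}_\gamma(\mathbb{D})$ into itself. With this in hand, Theorem~\ref{th:berezin range_mult} yields $\textnormal{Ber}(M_{z^n})=\varphi(\mathbb{D})=\{z^n:z\in\mathbb{D}\}$.

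The next step is to identify $\{z^n:z\in\mathbb{D}\}$ with $\mathbb{D}$. One inclusion is trivial: if $|z|<1$ then $|z^n|=|z|^n<1$, so $\{z^n:z\in\mathbb{D}\}\subseteq\mathbb{D}$. For the reverse inclusion, given $w\in\mathbb{D}$, write $w=\rho e^{i\psi}$ with $0\le\rho<1$ and $\psi\in[0,2\pi)$; then $z_0=\rho^{1/n}e^{i\psi/n}$ satisfies $|z_0|=\rho^{1/n}<1$ and $z_0^{\,n}=w$, so $w\in\{z^n:z\in\mathbb{D}\}$. Therefore $\textnormal{Ber}(M_{z^n})=\mathbb{D}$.

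Finally, I would record that $\mathbb{D}$ is convex in $\mathbb{C}$: for $z_1,z_2\in\mathbb{D}$ and $t\in[0,1]$, the triangle inequality gives $|tz_1+(1-t)z_2|\le t|z_1|+(1-t)|z_2|<1$, so $tz_1+(1-t)z_2\in\mathbb{D}$. This completes the argument. I do not expect any genuine obstacle here; the only point deserving a word of care is the surjectivity of $z\mapsto z^n$ onto $\mathbb{D}$, i.e.\ the existence of an $n$-th root of each $w\in\mathbb{D}$ lying inside the disc, which is settled by the polar-coordinate computation above.
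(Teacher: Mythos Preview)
Your proof is correct and follows essentially the same route as the paper: both invoke Theorem~\ref{th:berezin range_mult} to reduce the question to identifying $\{z^n:z\in\mathbb{D}\}$, and both compute this image via polar coordinates. Your version is in fact slightly cleaner, giving the surjectivity by an explicit $n$-th root rather than the paper's more descriptive geometric argument, and you also supply the (brief) verifications that $z^n$ is a multiplier and that $\mathbb{D}$ is convex, which the paper leaves implicit.
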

\begin{proof}
    By Theorem \ref{th:berezin range_mult}, the Berezin range of $M_{z^n},\;\textnormal{Ber}(M_{z^n})$ is the image of the unit disc $\mathbb{D}$ under the mapping $w=z^n,\;z\in \mathbb{D}.$ We see that for $z=0,\;w=0$. For $z\neq0$, we put $z=re^{i\theta}\;(0<r<1)$ to have
    \begin{equation*}
        w=r^ne^{in\theta},
    \end{equation*}
  and we see from above equation that the transformation $w=z^n$ contracts the radius
vector representing $z$ and rotates it through the angle $(n-1)\theta$ about the origin. The image of $\mathbb{D}$ is, therefore, geometrically similar to itself, i.e., a disc centered on the origin. Now, to find the radius of this image disc, we notice that $r^n$ increases with increasing $r;$ and when $r\rightarrow 1^{-},\;r^n \rightarrow 1^{-}.$ Thus, the radius of the image disc is $1,$ i.e., it is nothing but the disc $\mathbb{D}$ itself.
\end{proof}
\begin{corollary}\label{cor:mult_Az^n}
    The Berezin range of the multiplication operator $M_{Az^n},\;A \in \mathbb{C},\;n\in \mathbb{N},$ acting on $\mathcal{H}_\gamma (\mathbb{D})$ is the open  disc $|z|<|A|$ in the complex plane, which is convex in $\mathbb{C}.$
\end{corollary}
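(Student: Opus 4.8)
The plan is to reduce everything to Theorem \ref{th:berezin range_mult}. Since the statement presupposes that $M_{Az^n}$ is a well-defined operator on $\mathcal{H}_\gamma(\mathbb{D})$, we have $Az^n \in \textnormal{Mult}(\mathcal{H}_\gamma(\mathbb{D}))$, and Theorem \ref{th:berezin range_mult} gives at once $\textnormal{Ber}(M_{Az^n}) = \varphi(\mathbb{D})$ where $\varphi(z) = Az^n$. The entire corollary therefore becomes the purely set-theoretic task of identifying the image of the open unit disc under the monomial map $z \mapsto Az^n$; no Berezin-transform computation beyond what is already done in Theorem \ref{th:berezin range_mult} is needed.

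To carry this out I would assume $A \neq 0$ (the degenerate case $A=0$ gives the zero operator, whose Berezin range is $\{0\}$, trivially convex) and write $A = |A|e^{i\alpha}$. First handle $z=0$: $\varphi(0)=0$. For $z \neq 0$ write $z = re^{i\theta}$ with $0 < r < 1$ and $\theta \in [0,2\pi)$, so that $\varphi(z) = |A|\,r^n e^{i(\alpha + n\theta)}$. Because $n \geq 1$, as $\theta$ ranges over $[0,2\pi)$ the argument $\alpha + n\theta$ sweeps the entire unit circle; hence for each fixed $r \in (0,1)$ the set $\{\varphi(re^{i\theta}) : \theta \in [0,2\pi)\}$ is exactly the circle of radius $|A|\,r^n$ centred at the origin (compare the geometric description already used in Corollary \ref{cor:mult_z^n}). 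Next, letting $r$ vary, the map $r \mapsto |A|\,r^n$ is a continuous strictly increasing bijection of $[0,1)$ onto $[0,|A|)$, so $\varphi(\mathbb{D}) = \bigcup_{0 \leq r < 1}\{w : |w| = |A|\,r^n\} = \{w \in \mathbb{C} : |w| < |A|\}$, the open disc of radius $|A|$. Finally, an open disc is convex in $\mathbb{C}$, which yields the last assertion.

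The argument is entirely routine, so there is essentially no real obstacle; the only point deserving care is that, since $r < 1$ \emph{strictly}, the supremum $|A|$ of $\{|A|\,r^n : 0 \leq r < 1\}$ is never attained, so one genuinely obtains the \emph{open} disc $|z| < |A|$ rather than the closed one — this is exactly what the statement claims, and it is the subtlety to be highlighted in the write-up.
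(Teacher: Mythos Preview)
Your proof is correct and follows essentially the same route as the paper: both invoke Theorem \ref{th:berezin range_mult} to reduce the problem to computing $\varphi(\mathbb{D})$ for $\varphi(z)=Az^n$, write $z=re^{i\theta}$ and $A=|A|e^{i\alpha}$ to obtain $\varphi(z)=|A|\,r^n e^{i(\alpha+n\theta)}$, and then argue that the resulting image is the open disc of radius $|A|$. Your write-up is in fact slightly more careful than the paper's --- you explicitly isolate the degenerate case $A=0$, spell out that $r\mapsto |A|\,r^n$ is a bijection of $[0,1)$ onto $[0,|A|)$, and flag the reason the disc is open --- but the underlying argument is the same.
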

\begin{proof}
    By Theorem \ref{th:berezin range_mult}, the Berezin range of $M_{Az^n},\;\textnormal{Ber}\left(M_{Az^n}\right)$ is the image of the unit disc $\mathbb{D}$ under the mapping $w=Az^n,\;z\in \mathbb{D}.$ We see that for $z=0,\;w=0$. For $z\neq0$, we put $z=re^{i\theta}\;(0<r<1)$ and $A=ae^{i\alpha}$ to have
    \begin{equation*}        w=ae^{i\alpha}r^ne^{in\theta}=ar^ne^{i(\alpha+n\theta)}
    \end{equation*}
  and we see from above equation that the transformation $w=Az^n$ expands or contracts (according as the modulus of $A$) the radius
vector representing $z$ and rotates it through the angle $[\alpha+(n-1)\theta]$ about the origin. The image of $\mathbb{D}$ is, therefore, geometrically similar to itself, i.e., a disc centered on the origin. Now, to find the radius of this image disc, we notice that $ar^n$ increases with increasing $r;$ and when $r\rightarrow 1^{-},\;ar^n \rightarrow a^{-}.$ Thus, the radius of the image disc is $a,$ i.e., $\textnormal{Ber}\left(M_{Az^n}\right)=\mathbb{D}_{|A|}$.
\end{proof}
For a visual interpretation of Corollary \ref{cor:mult_Az^n}, see Figure \ref{fig: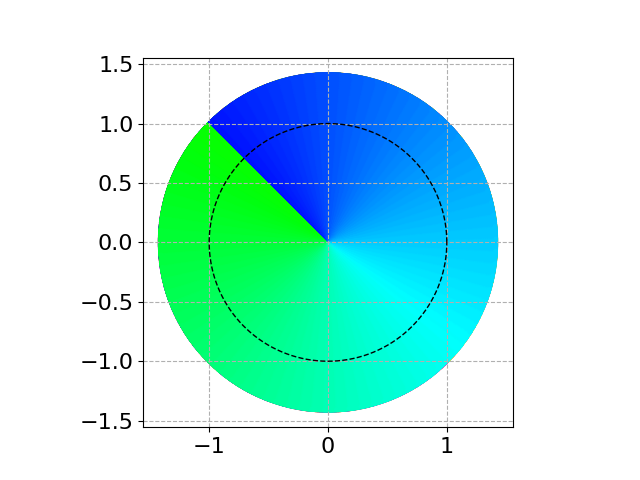}.
\begin{corollary}\label{cor:mult_Az^n+B}
    The Berezin range of the multiplication operator $M_{Az^n+B},\;A,B\in \mathbb{C},\;n\in \mathbb{N},$ acting on $\mathcal{H}_\gamma (\mathbb{D})$ is the open disc centered on the point $B$ and of radius $|A|,$ which is convex in $\mathbb{C}.$
\end{corollary}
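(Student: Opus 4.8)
The plan is to deduce this immediately from Theorem \ref{th:berezin range_mult} together with Corollary \ref{cor:mult_Az^n}. First I would note that the polynomial $\varphi(z)=Az^n+B$ is bounded on $\mathbb{D}$ and maps $\mathcal{H}_\gamma(\mathbb{D})$ into itself, so $\varphi\in\textnormal{Mult}(\mathcal{H}_\gamma(\mathbb{D}))$ and Theorem \ref{th:berezin range_mult} applies, giving $\textnormal{Ber}(M_{Az^n+B})=\varphi(\mathbb{D})=\{Az^n+B:z\in\mathbb{D}\}$. Thus the entire problem reduces to identifying the image set $\{Az^n+B:z\in\mathbb{D}\}$.

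Next I would analyze this image. Write $w=Az^n+B$, so that $w-B=Az^n$; the map $z\mapsto Az^n$ is exactly the one treated in Corollary \ref{cor:mult_Az^n}, whose image over $\mathbb{D}$ is the open disc $\mathbb{D}_{|A|}$ centered at the origin. (Alternatively, one reruns the polar-coordinate computation: with $z=re^{i\theta}$, $0\le r<1$, and $A=ae^{i\alpha}$, one has $Az^n=ar^{n}e^{i(\alpha+n\theta)}$, and as $\theta$ ranges over $[0,2\pi)$ and $r$ over $[0,1)$ the argument is arbitrary while $ar^{n}$ sweeps the interval $[0,a)$, so the image is precisely $\{\zeta:|\zeta|<|A|\}$.) Translating by $B$ then yields $\{w:|w-B|<|A|\}$, the open disc of radius $|A|$ centered at $B$, which proves the stated description of $\textnormal{Ber}(M_{Az^n+B})$.

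Finally I would observe that every open disc in $\mathbb{C}$ is convex, since it is an open ball for the Euclidean norm; this gives the convexity assertion and completes the proof.

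The argument is essentially bookkeeping layered on top of Theorem \ref{th:berezin range_mult} and Corollary \ref{cor:mult_Az^n}, so I do not anticipate a genuine obstacle. The only point needing a line of care is the surjectivity of $r\mapsto r^{n}$ from $[0,1)$ onto $[0,1)$ (equivalently, that $z\mapsto z^{n}$ maps $\mathbb{D}$ onto all of $\mathbb{D}$, not merely into it), which is immediate because $r\mapsto r^{n}$ is a continuous strictly increasing bijection of $[0,1)$ onto itself; this is what guarantees the image disc is the \emph{full} open disc of radius $|A|$ rather than a proper subset.
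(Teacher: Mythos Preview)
Your proof is correct and follows essentially the same route as the paper: apply Theorem~\ref{th:berezin range_mult} to reduce to computing $\varphi(\mathbb{D})$, decompose $w=Az^n+B$ as the map $z\mapsto Az^n$ (handled by Corollary~\ref{cor:mult_Az^n}) followed by the translation $W\mapsto W+B$, and conclude that the image is the open disc $|w-B|<|A|$. Your additional remarks on $\varphi\in\textnormal{Mult}(\mathcal{H}_\gamma(\mathbb{D}))$ and on the surjectivity of $r\mapsto r^n$ are welcome points of care that the paper leaves implicit.
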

\begin{proof}
    By Theorem \ref{th:berezin range_mult}, the Berezin range of $M_{Az^n+B},\;\textnormal{Ber}(M_{Az^n+B})$ is the image of the unit disc $\mathbb{D}$ under the mapping $w=Az^n+B,\;z\in \mathbb{D}.$ We can view this mapping as the composition of the mappings
    \begin{equation*}
        w=W+B,\hspace{0.3cm}W=Az^n;\hspace{0.3cm}z\in \mathbb{D}.
    \end{equation*}
    \par As we have seen earlier (c.f. Corollary \ref{cor:mult_Az^n}), under the mapping $Az^n,$ the unit disc $\mathbb{D}$ is mapped onto the open disc $\mathbb{D}_{a}$ in the complex plane, where $a=|A|$. Also, the mapping $w=W+B$ is a translation by means of the vector representing $B$ (see \cite{Brown_Churchill_Book_2009}, p. 312), and hence the image region is congruent to the original one. Thus, if $B=b_1+b_2i,$ then the image of $\mathbb{D}$ under the mapping $w=Az^n+B$ is the disc $|z-(b_1,b_2)|<a$ in the complex plane, which is the Berezin range of $M_{Az^n+B}.$  
\end{proof}
For a visual interpretation of Corollary \ref{cor:mult_Az^n+B}, (see Figure \ref{fig: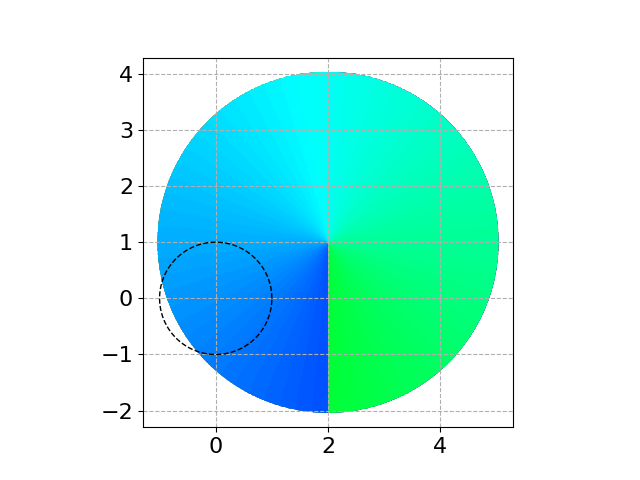}). \par
\vspace{0.3cm}
Consider the multiplication operator $M_\varphi$ induced by the complex polynomial $\varphi(z)=a_0+a_1z+a_2z^2+\cdots+a_nz^n,$ where $a_i\in \mathbb{C},\;i=1(n),$ defined on $\mathbb{D}.$ Then the Berezin range of the operator $M_\varphi$ acting on the weighted Hardy space $\mathcal{H}_\gamma (\mathbb{D})$ is, by Theorem \ref{th:berezin range_mult}, the image of $\mathbb{D}$ under the mapping $w=\varphi(z).$ Since the image of a disc under a polynomial mapping is not necessarily convex in $\mathbb{C},$ it follows that $\textnormal{Ber}(M_\varphi)$ is not always convex in $\mathbb{C}$ (see Figure \ref{fig: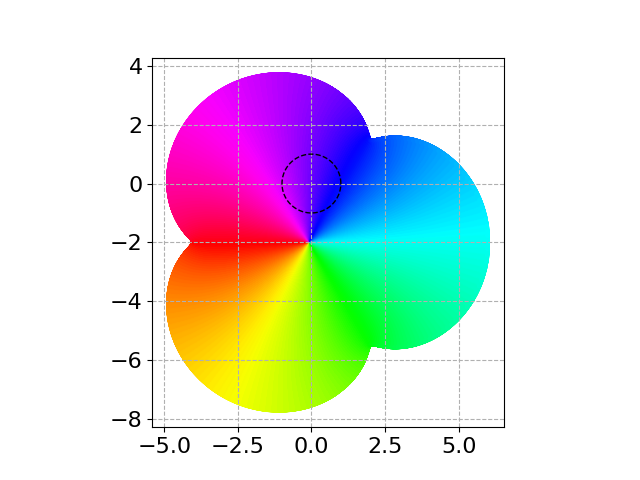}).

\begin{figure}[htbp!]
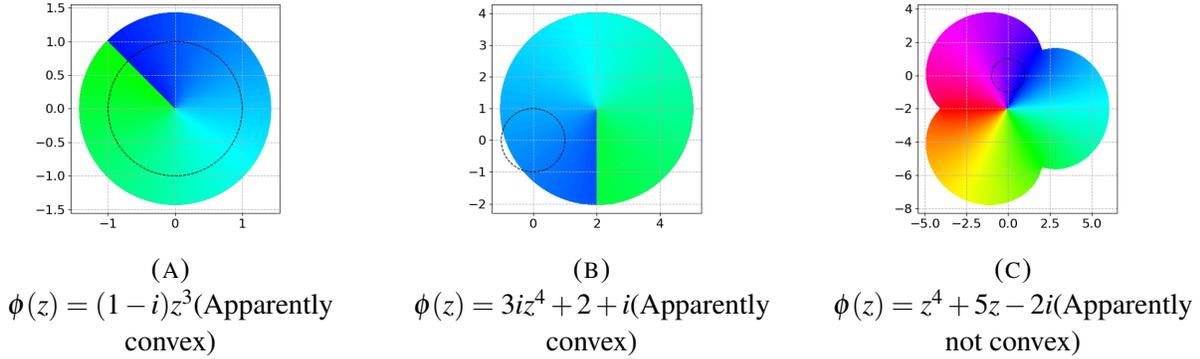

    \centering
    \begin{subfigure}[b]{0.3\textwidth}
        \centering
        \includegraphics[width=\textwidth]{mult_pol__1-i_z_3_fs=16.png}
        \caption{\centering $\phi(z)=(1-i)z^3$(Apparently convex)}
        \label{fig:mult_pol__1-i_z_3_fs=16.png}
    \end{subfigure}
    \hfill
    \begin{subfigure}[b]{0.3\textwidth}
        \centering
        \includegraphics[width=\textwidth]{mult_pol_3iz_4+2+i_fs=16.png}
        \caption{\centering $\phi(z)=3iz^4+2+i$(Apparently convex)}
        \label{fig:mult_pol_3iz_4+2+i_fs=16.png}
    \end{subfigure}
    \hfill
    \begin{subfigure}[b]{0.3\textwidth}
        \centering
        \includegraphics[width=\textwidth]{mult_pol_3iz_4+5z-2i_fs=16.png}
        \caption{\centering $\phi(z)=z^4+5z-2i$(Apparently not convex)}
        \label{fig:mult_pol_3iz_4+5z-2i_fs=16.png}
    \end{subfigure}
    \caption{Comparison of Berezin ranges of the multiplication operator $M_\varphi$ acing on $\mathcal{H}_\gamma (\mathbb{D}),$ for different polynomials $\varphi(z)$.}
    \label{fig:mult_pol}
\end{figure}
\begin{remark}
    Even when all the coefficients of $\varphi(z)$ are purely real, there is no guarantee that $\textnormal{Ber}\left(M_\varphi\right)$ is convex (see Figure \ref{fig: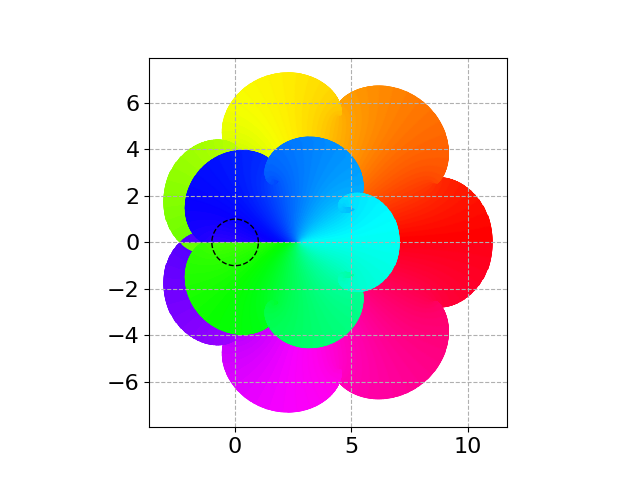}). Only one thing that can be asserted about $\textnormal{Ber}\left(M_\varphi\right)$ is as follows.
\end{remark}
\begin{corollary}\label{cor:sym_pol}
    If $\varphi(z)=a_0+a_1z+a_2z^2+\cdots+a_nz^n$ be a polynomial with all coefficients purely real, then $\textnormal{Ber}\left(M_\varphi\right)$ is closed with respect to complex conjugation and hence is symmetric about the real axis.
\end{corollary}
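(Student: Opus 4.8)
The plan is to show directly that $\overline{\widetilde{M_\varphi}(\lambda)} = \widetilde{M_\varphi}(\overline{\lambda})$ for every $\lambda \in \mathbb{D}$, which immediately gives that $\textnormal{Ber}(M_\varphi)$ is closed under complex conjugation, and symmetry about the real axis follows at once. First I would invoke Theorem \ref{th:berezin range_mult}, which tells us $\widetilde{M_\varphi}(\lambda) = \varphi(\lambda)$, so the whole question reduces to a statement about the polynomial $\varphi$ itself: the Berezin range is $\varphi(\mathbb{D})$, and we must show $\varphi(\mathbb{D})$ is conjugate-symmetric.

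The key computation is then just the elementary fact that a polynomial with real coefficients satisfies $\overline{\varphi(\lambda)} = \varphi(\overline{\lambda})$. Concretely, writing $\varphi(z) = \sum_{j=0}^{n} a_j z^j$ with each $a_j \in \mathbb{R}$, we have
\begin{equation*}
    \overline{\varphi(\lambda)} = \overline{\sum_{j=0}^{n} a_j \lambda^j} = \sum_{j=0}^{n} \overline{a_j}\,\overline{\lambda}^{\,j} = \sum_{j=0}^{n} a_j \overline{\lambda}^{\,j} = \varphi(\overline{\lambda}).
\end{equation*}
Since $\lambda \in \mathbb{D}$ implies $\overline{\lambda} \in \mathbb{D}$, the value $\varphi(\overline{\lambda}) = \overline{\varphi(\lambda)}$ again lies in $\varphi(\mathbb{D}) = \textnormal{Ber}(M_\varphi)$. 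Thus for every $w \in \textnormal{Ber}(M_\varphi)$ we also have $\overline{w} \in \textnormal{Ber}(M_\varphi)$, i.e.\ the Berezin range is closed under complex conjugation; equivalently, it is invariant under reflection across the real axis.

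There is essentially no obstacle here — the statement is a direct corollary of Theorem \ref{th:berezin range_mult} combined with the defining property of real-coefficient polynomials, and it can equally be seen as the special case of Theorem \ref{th:more generalized finite sum} in which $n=1$, $g_1 = \overline{\varphi}$ (or rather the functional pairing reproducing $\varphi(\lambda)$) and $h_1 = \varphi$, all with real power-series coefficients. The only minor care needed is to note explicitly that $\mathbb{D}$ is itself symmetric under $\lambda \mapsto \overline{\lambda}$, which is what lets us conclude $\varphi(\overline{\lambda})$ belongs to the range rather than merely being some complex number. I would keep the proof to these few lines.
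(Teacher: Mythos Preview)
Your proof is correct and follows essentially the same approach as the paper: both rely on Theorem~\ref{th:berezin range_mult} to reduce the statement to $\textnormal{Ber}(M_\varphi)=\varphi(\mathbb{D})$, and then use the two facts that $\overline{\varphi(z)}=\varphi(\overline{z})$ for real-coefficient polynomials and that $\mathbb{D}$ is closed under conjugation. Your aside connecting this to Theorem~\ref{th:more generalized finite sum} is not quite right---$M_\varphi$ is not a finite-rank operator of the form $\sum_i\langle f,g_i\rangle h_i$---but this does not affect the proof itself.
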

\begin{proof}
    This follows from the facts that for all $z \in \mathbb{D},\;\overline{z}\in \mathbb{D}$ and $\overline{\varphi(z)}=\varphi(\overline{z}),$ when all $a_i\in \mathbb{R}.$ 
\end{proof}
For a visual verification of Corollary \ref{cor:sym_pol}, see Figure \ref{fig:mult_pol_z_14+5z_2-2z+3_fs=16.png}.
\begin{figure}[ht!]
    \centering
    \includegraphics[width=0.5\textwidth]{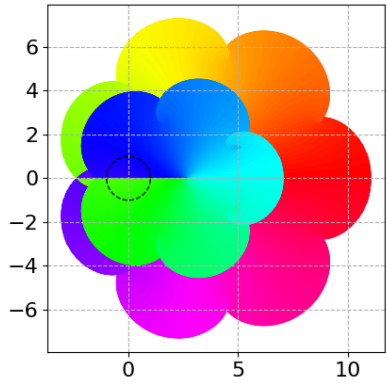}
    \caption{Berezin range of $M_\varphi$, where $\varphi(z)=z^{14}+5z^2-2z+3$ (apparently not convex).}
    \label{fig:mult_pol_z_14+5z_2-2z+3_fs=16.png}
\end{figure}
\\Our next goal is to investigate the convexity of the Berezin ranges of multiplication operators induced by a finite Blaschke product. First, we define the \textit{finite Blascke product.}
\begin{definition}[Finite Blaschke Product]\label{def:fbp}
    Let $\alpha_1,\alpha_2,\dots,\alpha_n$ be a finite set of points (not necessarily all distinct) inside $\mathbb{D}.$ Then a finite Blaschke product of degree $n$ is defined as
    \begin{equation}\label{eq:fbp}        B(z)=\zeta z^m\prod_{k=1}^n\dfrac{|\alpha_k|}{\alpha_k}\dfrac{\alpha_k-z}{1-\overline{\alpha_k}z},
    \end{equation}
    where $|\zeta|=1$ and $m$ is a non-negative integer, provided none of $\alpha_k\neq0$. If, however, some $\alpha_k\neq0,$ we set $\dfrac{|\alpha_k|}{\alpha_k}=-1.$ \textnormal{For more details on finite Blaschke product, one can see} \cite{Garcia_book}.
\end{definition}

\begin{theorem}\label{th:mult_fbp}
    The Berezin range of the multiplication operator $M_{B(z)}$ acting on $\mathcal{H}_\gamma(\mathbb{D}),$ where $B(z)$ is the finite Blaschke product of degree $n$ given by \eqref{eq:fbp}, is the unit disc $\mathbb{D}$ itself, which is convex in $\mathbb{C}.$
\end{theorem}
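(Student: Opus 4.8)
The plan is to reduce the statement to a classical property of finite Blaschke products by invoking Theorem~\ref{th:berezin range_mult}. The finite Blaschke product $B(z)$ from Definition~\ref{def:fbp} is holomorphic in a neighbourhood of $\overline{\mathbb{D}}$, since its only poles $1/\overline{\alpha_k}$ lie outside $\overline{\mathbb{D}}$; in particular $B(z)$ is a bounded holomorphic function on $\mathbb{D}$ and hence a multiplier of $\mathcal{H}_\gamma(\mathbb{D})$, so $M_{B(z)}$ is a well-defined bounded operator. Theorem~\ref{th:berezin range_mult} then gives $\textnormal{Ber}(M_{B(z)}) = B(\mathbb{D})$, and the entire task becomes showing $B(\mathbb{D}) = \mathbb{D}$.

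First I would verify the inclusion $B(\mathbb{D}) \subseteq \mathbb{D}$. Each factor $\dfrac{\alpha_k - z}{1 - \overline{\alpha_k}z}$ is a disc automorphism, the monomial $z^m$ maps $\mathbb{D}$ into $\mathbb{D}$, and the constants $\zeta$ and $|\alpha_k|/\alpha_k$ are unimodular, so $|B(z)| < 1$ for every $z \in \mathbb{D}$. The same computation shows $|B(z)| = 1$ whenever $|z| = 1$; this boundary identity is what powers the surjectivity argument.

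The core step is the reverse inclusion $\mathbb{D} \subseteq B(\mathbb{D})$. Fix $w \in \mathbb{D}$ and set $g(z) = B(z) - w$, holomorphic in a neighbourhood of $\overline{\mathbb{D}}$. On $\mathbb{T}$ we have $|w| < 1 = |B(z)|$, so $g$ has no zeros on $\mathbb{T}$, and by Rouch\'e's theorem $g$ and $B$ have the same number of zeros in $\mathbb{D}$ counted with multiplicity; equivalently one evaluates $\dfrac{1}{2\pi i}\oint_{|z|=1}\dfrac{B'(z)}{B(z)-w}\,dz$ directly. That number is precisely the degree of $B$ (the zero of order $m$ at the origin together with $\alpha_1,\dots,\alpha_n$), which is positive, so $g$ has at least one zero $z_0 \in \mathbb{D}$, i.e.\ $B(z_0) = w$. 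Hence $\mathbb{D} \subseteq B(\mathbb{D})$, and combining the two inclusions yields $\textnormal{Ber}(M_{B(z)}) = B(\mathbb{D}) = \mathbb{D}$, which is obviously convex in $\mathbb{C}$.

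The only point requiring care is the surjectivity step: one must record the boundary identity $|B| = 1$ on $\mathbb{T}$ and the fact that $B$ extends analytically past $\mathbb{T}$ before applying Rouch\'e's theorem (or the argument principle), so that the contour integral is legitimate and the zero count is exactly the degree. Alternatively, one may simply cite the classical fact (see \cite{Garcia_book}) that a finite Blaschke product of positive degree is a proper holomorphic self-map of $\mathbb{D}$ onto $\mathbb{D}$; everything else in the proof is routine bookkeeping.
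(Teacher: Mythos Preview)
Your proposal is correct and follows essentially the same approach as the paper: both reduce via Theorem~\ref{th:berezin range_mult} to showing $B(\mathbb{D})=\mathbb{D}$, establish $|B|<1$ on $\mathbb{D}$ and $|B|=1$ on $\mathbb{T}$, and then argue surjectivity. The only difference is cosmetic: the paper obtains $|B|<1$ via the maximum modulus principle and cites \cite[Theorem~3.6]{Garcia_Survey} for surjectivity, whereas you argue factor-by-factor and supply the self-contained Rouch\'e argument (which is precisely what underlies the cited result).
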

\begin{proof}
    By Theorem \ref{th:berezin range_mult}, $\textnormal{Ber}\left(M_{B(z)}\right)=B(\mathbb{D}).$ First, we claim that $B(z)=1$ for all $z\in \mathbb{T},$ where $\mathbb{T}$ denotes the unit circle, i.e., the boundary of $\mathbb{D}.$ We note that for any $z\in \mathbb{T},\;z\overline{z}=|z|^2=1$ so that for any $z\in \mathbb{T},$
    \begin{equation*}
        \begin{split}
            |B(z)|=|\zeta||z^m|\left|\prod_{k=1}^n\dfrac{|\alpha_k|}{\alpha_k}\dfrac{\alpha_k-z}{1-\overline{\alpha_k}z}\right|    &=\prod_{k=1}^n\left|\dfrac{|\alpha_k|}{\alpha_k}\right|\left|\dfrac{\alpha_k-z}{1-\overline{\alpha_k}z}\right|\\     &=\prod_{k=1}^n\left|\dfrac{\alpha_k-z}{z\left(\overline{z}-\overline{\alpha_k}\right)}\right|\\
            &=\left|\dfrac{1}{z}\right|\prod_{k=1}^n\dfrac{|z-\alpha_k|}{\left|\overline{z-\alpha_k}\right|}\\
            &=1.
        \end{split}
    \end{equation*}\par
    Thus, our claim is established. Now, since $B(z)$ is analytic in $\mathbb{D}$ and continuous on $\overline{\mathbb{D}}$, by \textit{maximum modulus principle} (for instance, see \cite[p. 178]{Brown_Churchill_Book_2009}) $B(z)$ attains its maximum value on $\overline{\mathbb{D}}$ and not in $\mathbb{D}.$ Then, for all $z\in \mathbb{D},\;|B(z)|<1.$ Thus, for $z\in \mathbb{D},\;B(z)\in \mathbb{D}.$ Conversely, let $w \in \mathbb{D}.$ Then the equation $B(z)=w$ has exactly $n$ solution all of which are in $\mathbb{D}$ (see \cite[Theorem 3.6]{Garcia_Survey}). This means that every $w\in \mathbb{D}$ has at least one pre-image in $\mathbb{D}$ under the mapping $B.$ Hence, $B(\mathbb{D})=\mathbb{D}.$
\end{proof}
\vspace{0.3cm}
Let us now consider a disc automorphism (also known as \textit{Blaschke factor}) given by
\begin{equation}\label{eq:bf}
    \phi(z)=\zeta \dfrac{\alpha-z}{1-\overline{\alpha}z},
\end{equation}
where $\alpha \in \mathbb{D},\,\zeta \in \mathbb{D}.$ Since a disk automorphism is a particular type of finite Blaschke product (with one zero only), we have the following corollary.
\begin{corollary}
    The Berezin range of the multiplication operator $M_\phi$ acting on $\mathcal{H}_\gamma (\mathbb{D}),$ where $\phi$ is the Blascke factor given by (\ref{eq:bf}) is the unit disc $\mathbb{D}.$
\end{corollary}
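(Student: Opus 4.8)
The plan is to derive this as an immediate consequence of Theorem~\ref{th:mult_fbp}, once we recognise the Blaschke factor $\phi$ in~\eqref{eq:bf} as a finite Blaschke product of degree one. First I would apply Theorem~\ref{th:berezin range_mult} to reduce the computation of $\textnormal{Ber}(M_\phi)$ to that of the image $\phi(\mathbb{D})$, so that the corollary amounts to the assertion $\phi(\mathbb{D})=\mathbb{D}$.

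The second step is the identification of $\phi$ with a Blaschke product in the normal form~\eqref{eq:fbp}. Taking $m=0$ and $n=1$ with the single zero $\alpha_1=\alpha$, the function $B(z)=\zeta_0\,\frac{|\alpha|}{\alpha}\,\frac{\alpha-z}{1-\overline{\alpha}z}$ agrees with $\phi$ once the unimodular constant $\zeta_0$ is chosen so that $\zeta_0\frac{|\alpha|}{\alpha}=\zeta$ (using the convention $\frac{|\alpha|}{\alpha}:=-1$ when $\alpha=0$, as in Definition~\ref{def:fbp}). Hence $\phi$ is a finite Blaschke product of degree one, and Theorem~\ref{th:mult_fbp} gives $\phi(\mathbb{D})=\mathbb{D}$; combining with the first step, $\textnormal{Ber}(M_\phi)=\mathbb{D}$, which is convex in $\mathbb{C}$.

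As an alternative, one could bypass Theorem~\ref{th:mult_fbp} and argue directly: $\phi$ is holomorphic on a neighbourhood of $\overline{\mathbb{D}}$, the same cancellation used in the proof of Theorem~\ref{th:mult_fbp} shows $|\phi(z)|=1$ for $z\in\mathbb{T}$, so by the maximum modulus principle $|\phi(z)|<1$ on $\mathbb{D}$; surjectivity onto $\mathbb{D}$ follows because the map $z\mapsto\frac{\alpha-z}{1-\overline{\alpha}z}$ is an involution of $\mathbb{D}$ and postcomposition with the rotation by $\zeta$ is a bijection of $\mathbb{D}$. Either way there is essentially no real obstacle here; the only point requiring a little care is the bookkeeping of the unimodular constant and the $\alpha=0$ convention in Definition~\ref{def:fbp}, which is entirely routine.
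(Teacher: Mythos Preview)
Your proposal is correct and follows essentially the same approach as the paper: the paper's entire argument is the single observation that a disc automorphism is a finite Blaschke product of degree one, so the corollary follows from Theorem~\ref{th:mult_fbp}. Your write-up simply makes the identification with the normal form~\eqref{eq:fbp} more explicit (tracking the unimodular constant and the $\alpha=0$ convention) and offers an optional direct argument, but the core idea coincides with the paper's.
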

If, however, we let $\zeta$ to lie anywhere in the complex plane, then proceeding as in the proof of Theorem \ref{th:mult_fbp} we can see that the Berezin range of $M_\phi$ is a disc centered on the origin and radius $|\zeta|$. For a visual verification, see Figure \ref{fig:mult_bf_different zeta}.

\begin{figure}[htbp!]
    \centering
    \begin{subfigure}[b]{0.3\textwidth}
        \centering
        \includegraphics[width=\textwidth]{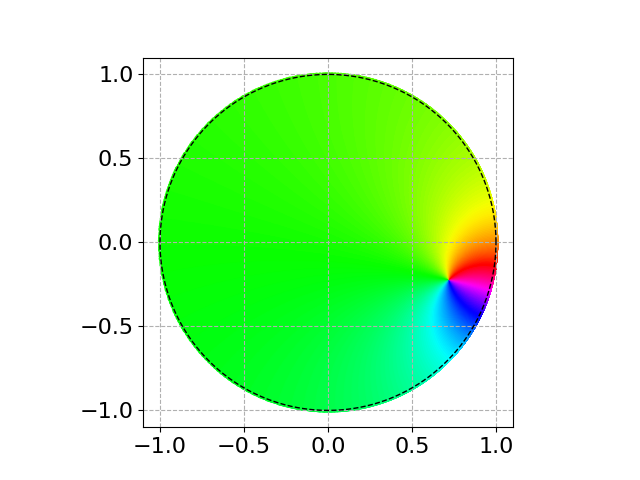}
        \caption{$\zeta=\frac{1}{\sqrt{2}}(1+i);\;|\zeta|=1$}
        \label{fig:mult_bf_l=_1+i_by_sqrt_2___a=_1-2i_by_3__fs=16.png}
    \end{subfigure}
    \hfill
    \begin{subfigure}[b]{0.3\textwidth}
        \centering
        \includegraphics[width=\textwidth]{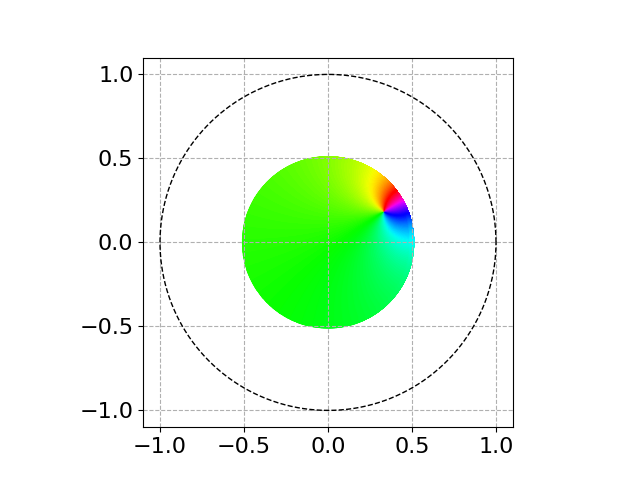}
        \caption{$\zeta=\frac{i}{2};\;|\zeta|=0.5<1$}
        \label{fig:mult_bf_l=_i_by_2__a=_1-2i_by_3__fs=16.png}
    \end{subfigure}
    \hfill
    \begin{subfigure}[b]{0.3\textwidth}
        \centering
        \includegraphics[width=\textwidth]{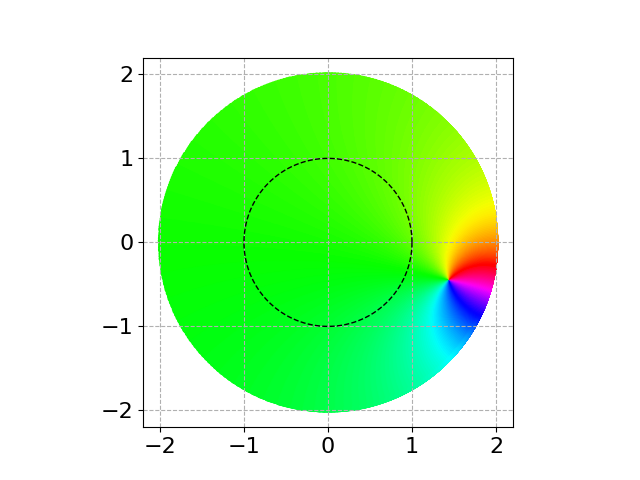}
        \caption{$\zeta=\sqrt{2}(1+i);\;|\zeta|=2>1$}
        \label{fig:mult_bf_l=sqrt_2__1+i__a=_1-2i_by_3__fs=16.png}
    \end{subfigure}
    \caption{Comparison of Berezin ranges of $M_\phi$, where $\phi(z)=\zeta \dfrac{\alpha-z}{1-\overline{\alpha}z},\;\alpha=\frac{1}{3}(1-2i),$ for different values of $\zeta$.}
    \label{fig:mult_bf_different zeta}
\end{figure}

\medskip
In the following, two tables comparing the Berezin ranges respectively of rank-one operators of different form acting on different spaces (see Table \ref{tab:comparison}) and of multiplication operators induced by different analytic functions (see Table  \ref{tab:comparison_mult}) are given, which may be helpful to the readers to grasp the summary of this article.

\begin{table}[htbp]
    \centering
    \begin{tabular}{|c||c|c|c|}
    \hline
    \makecell{\textbf{Hilbert spaces}$\;\;\Rightarrow$ \\ \textbf{Operators}$\;\;\Downarrow$} & \makecell{\textbf{Hardy space} \\ $H^2(\mathbb{D})$} & \makecell{\textbf{Bergman space}\\ $A^2(\mathbb{D})$} & \makecell{\textbf{Weighted Hardy space}\\ $\mathcal{H}_\gamma(\mathbb{D})$} \\
    \hline\hline
    $T(f)=\langle f,z\rangle z$ & $\left[0,\;\frac{1}{4}\right]$ & $\left[0,\;\frac{4}{27}\right]$ & $\left[0,\;\dfrac{\gamma^\gamma}{\left(1+\gamma\right)^{1+\gamma}}\right]$ \\
    \hline
    \makecell{$T(f)=\langle f,z^n\rangle z^n,$ \\where $n\in \mathbb{N}$} & $\left[0,\;\dfrac{n^n}{(n+1)^{n+1}}\right]$ & $\left[0,\;\dfrac{4n^n}{(n+2)^{n+2}}\right]$ & $\left[0,\;\dfrac{\gamma^\gamma n^n}{\left(n+\gamma\right)^{n+\gamma}}\right]$ \\
    \hline
    \makecell{$T(f)=\langle f,az^n\rangle az^n,$\\ where $n\in \mathbb{N},\;a\in \mathbb{C}$} & $\left[0,\;|a|^2\dfrac{n^n}{(n+1)^{n+1}}\right]$ & $\left[0,\;|a|^2\dfrac{4n^n}{(n+2)^{n+2}}\right]$ & $\left[0,\;|a|^2\dfrac{\gamma^\gamma n^n}{\left(n+\gamma\right)^{n+\gamma}}\right]$ \\
    \hline
   \makecell{$T(f)=\displaystyle\sum_{n=1}^{\infty}{\langle f,az^n\rangle az^n}$,\\ where $a\in \mathbb{D}$} & $[0,|a|^2)$ & $\left[0,\dfrac{|a|^2}{4}\right]$ & \makecell{$\left[0,\;|a|^2\dfrac{(\gamma-1)^{\gamma-1}}{\gamma^{\gamma}}\right],$ \\ when $\gamma>1;$\\$\left[0,\;|a|^2\right),$ \\when $\gamma=1;$ \\ $[0,\infty),$ \\when $0<\gamma\leq1.$} \\
    \hline
    \makecell{$T(f)=\langle f,z^m\rangle z^n,$ \\ where $m,n\in \mathbb{N}$} & \makecell{$\mathbb{D}_{\left(\frac{2}{m+n+2}\right) \left(\frac{m+n}{m+n+2}\right)^{\frac{m+n}{2}}},$ \\ when $m\neq n;$ \\ $\left[0,\;\dfrac{n^n}{\left(n+1\right)^{n+1}}\right],$\\ when $m=n.$} & \makecell{$\mathbb{D}_{\left(\frac{4}{m+n+4}\right)^2 \left(\frac{m+n}{m+n+4}\right)^{\frac{m+n}{2}}},$\\ when $m\neq n;$\\$\left[0,\;\dfrac{4 n^n}{\left(n+2\right)^{n+2}}\right],$ \\ when $m=n.$} & \makecell{$\mathbb{D}_{\left(\frac{2\gamma}{m+n+2\gamma}\right)^\gamma \left(\frac{m+n}{m+n+2\gamma}\right)^{\frac{m+n}{2}}},$ \\ when $m\neq n;$ \\ $\left[0,\;\dfrac{\gamma^\gamma n^n}{\left(n+\gamma\right)^{n+\gamma}}\right],$ \\ when $m=n.$} \\
    \hline
\end{tabular}
    \caption{Comparison of Berezin ranges of different rank-one operators acting on different spaces.}
    \label{tab:comparison}
\end{table}

\begin{table}[ht!]
    \centering
    \begin{tabular}{|l|r|}
    \hline
    \textbf{Operator} & \textbf{Berezin range} \\
    \hline \hline
    $M_{z^n};\;n\in \mathbb{N}$ & $\mathbb{D}$ \\
    \hline
    $M_{Az^n};\;A\in \mathbb{C},\;n\in \mathbb{N}$ &  $|z|<|A|$, i.e., $\mathbb{D}_{|A|}$ \\
    \hline
    $M_{Az^n+B};\;A,B\in \mathbb{C},\;n\in \mathbb{N}$ &  $|z-B|<|A|$ \\
    \hline
     $M_{B(z)},$ where $B(z)$ is finite Blaschke product of degree $n$ & $\mathbb{D}$   \\
     \hline
     $M_{\phi(z)},$ where $\phi(z)=\zeta \dfrac{\alpha-z}{1-\overline{\alpha}z};\;\alpha \in \mathbb{D},\;\zeta \in \mathbb{C}$ & $\mathbb{D}_\zeta$ \\
     \hline
    \end{tabular}
    \caption{Comparison of Berezin ranges of different multiplication operators acting on the space $\mathcal{H}_\gamma (\mathbb{D})$ with any weight $\gamma.$}
    \label{tab:comparison_mult}
\end{table}
We conclude by remarking that the convexity of the Berezin range for finite-rank operators acting on the weighted Hardy space \( \mathcal{H}_\gamma(\mathbb{D}) \) over the unit disc \( \mathbb{D} \) are investigated in this article. By employing similar analysis to different special classes of operators, one can explore the convexity of the Berezin range of operators on any reproducing kernel Hilbert space (RKHS) of holomorphic functions.  In light of this, we end by posing the following problem for further investigation:\\
\textbf{Can one characterize the convexity of the Berezin range for a finite-rank operator on suitable reproducing kernel Hilbert spaces ?}

\vspace{.1cm}
	\noindent
	{\small {\bf Acknowledgments.}\\
	 The second author is supported by the Institute Postdoctoral Fellowship, IIT Bhubaneswar (F. 15-12/2024-Acad/SBS5-PDF-01).
	}\\
\noindent \textbf{Declarations}
\begin{itemize}
\item {\bf{Availability of data and materials}}: Not applicable.
\item {\bf{Competing interests}}: The authors declare that they have no competing interests.
\item {\bf{Funding}}: Not applicable.
\item {\bf{Authors' contributions}}: The authors declare that they have contributed equally to this paper. All authors have read and approved this version.
\end{itemize}

\bigskip


\end{document}